\newcommand{\Diagonal}{\Delta}
\newcommand\R{\ensuremath{\mathbb{R}}}
\newcommand\N{\ensuremath{\mathbb{N}}}
\newcommand\Z{\ensuremath{\mathbb{Z}}}
\newcommand\Q{\ensuremath{\mathbb{Q}}}
\newcommand{\RP}{\ensuremath{\mathbb{RP}}}
\newcommand{\Rspace}{\mathbf{R}}
\newcommand\F{\mathcal{F}}
\newcommand\G{\mathcal{G}}
\renewcommand\P{\mathcal{P}}
\DeclareMathOperator\sd{sd}
\DeclareMathOperator\ima{Im}
\newcommand\eps{\ensuremath{\varepsilon}}
\newcommand{\pth}[1]{\left( #1 \right)}
\newcommand{\delprod}[1]{\widetilde{#1}}
\newcommand{\s}{\mathbb{S}}
\newcommand{\obs}[1]{\mathfrak{o}^{#1}}
\DeclareMathOperator{\conv}{conv}
\newcommand{\U}[1]{U_{\overline{#1}}}
\newcommand{\nontrivial}{nontrivial}
\newcommand{\simplex}[1]{\Delta_{#1}}
\newcommand{\skel}[2]{#2^{(#1)}}
\newcommand{\skelsim}[2]{\skel{#1}{\simplex{#2}}}
\theoremstyle{plain}
\newtheorem{theorem}{Theorem}
\newtheorem{claim}{Claim}
\newtheorem{lemma}[theorem]{Lemma}
\newtheorem{proposition}[theorem]{Proposition}
\newtheorem{corollary}[theorem]{Corollary}
\newtheorem{observation}[theorem]{Observation}
\theoremstyle{definition}
\newtheorem{definition}[theorem]{Definition}
\theoremstyle{remark}
\newtheorem{example}[theorem]{Example}
\newtheorem{remark}[theorem]{Remark}
\newcounter{sideremark}
\title{Bounding Helly numbers via Betti numbers\footnote{2010
    Mathematics Subject Classification: 52A35 (Helly-type theorems and
    geometric transversal theory) and 55U10 (Simplicial sets and
    complexes). Keywords: Helly-type theorems, Ramsey's theorem,
    Embeddings of simplicial complexes, Betti numbers.}~\footnote{An
    extended abstract of this work was presented at the \emph{31st
      International Symposium on Computational
      Geometry}~\cite{BH-Socg}.}~\footnote{PP, ZP and MT were
    partially supported by the Charles University Grant GAUK 421511.
    ZP was partially supported by the Charles University Grant
    SVV-2014-260103. ZP and MT were partially supported by the ERC
    Advanced Grant No.~267165 and by the project CE-ITI (GACR
    P202/12/G061) of the Czech Science Foundation. UW was partially
    supported by the Swiss National Science Foundation (grants
    SNSF-200020-138230 and SNSF-PP00P2-138948).  Part of this work was
    done when XG was affiliated with INRIA Nancy Grand-Est and when MT
    was affiliated with Institutionen f\"{o}r matematik, Kungliga
    Tekniska H\"{o}gskolan, then IST Austria.}}  \author[1]{Xavier
  Goaoc} \author[2]{Pavel Pat\'ak} \author[3]{Zuzana Pat\'akov\'a}
\author[3]{Martin Tancer} \author[4]{Uli Wagner}
\affil[1]{\small Universit\'e Paris-Est Marne-la-Vall\'ee, France.}
\affil[2]{\small Department of Algebra, Charles University, Prague, Czech Republic.}
\affil[3]{\small Department of Applied Mathematics, Charles University, Prague, Czech Republic.}
\affil[4]{IST Austria, Klosterneuburg, Austria.}
\begin{document}
\maketitle

\hfill
\begin{minipage}{8cm}
  \begin{flushright}
      \emph{ Dedicated to the memory of Ji\v{r}\'{\i} Matou\v{s}ek,
          wonderful teacher, mentor, collaborator, and~friend.}
  \end{flushright}
\end{minipage}

\begin{abstract}
  We show that very weak topological assumptions are enough to ensure
  the existence of a Helly-type theorem. More precisely, we show that
  for any non-negative integers $b$ and $d$ there exists an integer
  $h(b,d)$ such that the following holds. If $\F$ is a finite family
  of subsets of $\R^d$ such that $\tilde\beta_i\pth{\bigcap\G} \le b$
  for any $\G \subsetneq \F$ and every $0 \le i \le \lceil d/2
  \rceil-1$ then $\F$ has Helly number at most $h(b,d)$. Here
  $\tilde\beta_i$ denotes the reduced $\Z_2$-Betti numbers (with
  singular homology). These topological conditions are sharp: not
  controlling any of these $\lceil d/2 \rceil$ first Betti numbers
  allow for families with unbounded Helly number.

  Our proofs combine homological non-embeddability results with a
  Ramsey-based approach to build, given an arbitrary simplicial
  complex $K$, some well-behaved chain map $C_*(K) \to C_*(\R^d)$.
\end{abstract}

\section{Introduction}

Helly's classical theorem~\cite{h-umkkm-23} states that a finite
family of convex subsets of $\R^d$ must have a point in common if any
$d+1$ of the sets have a point in common.  Together with Radon's and
Caratheodory's theorems, two other ``very finite properties'' of
convexity, Helly's theorem is a pillar of combinatorial geometry.
Along with its variants (\emph{eg.} colorful or fractional), it
underlies many fundamental results in discrete geometry, from the
centerpoint theorem~\cite{rado} to the existence of weak
$\eps$-nets~\cite{epsnet} or the
$(p,q)$-theorem~\cite{piercing-number}.

In the contrapositive, Helly's theorem asserts that any finite family
of convex subsets of $\R^d$ with empty intersection contains a
sub-family of size at most $d+1$ that already has empty intersection.
This inspired the definition of the \emph{Helly number} of a family
$\F$ of arbitrary sets. If $\mathcal{F}$ has empty intersection then
its Helly number is defined as the size of the largest sub-family
$\mathcal{G} \subseteq \F$ with the following properties:
$\mathcal{G}$ has empty intersection and any proper sub-family of
$\mathcal{G}$ has nonempty intersection; if $\mathcal{F}$ has nonempty
intersection then its Helly number is, by convention, $1$.  With this
terminology, Helly's theorem simply states that any finite family of
convex sets in $\R^d$ has Helly number at most $d+1$. 

Helly already realized that bounds on Helly numbers independent of the
cardinality of the family are not a privilege of
convexity: his \emph{topological} theorem~\cite{h-usvam-30} asserts
that a finite family of open subsets of~$\R^d$ has Helly number at
most $d+1$ if the intersection of any sub-family of at most $d$
members of the family is either empty or a \emph{homology
  cell}.\footnote{By definition, a homology cell is a topological
  space $X$ all of whose (reduced, singular, integer coefficient)
  homology groups are trivial, as is the case if $X=\R^d$ or $X$ is a
  single point. Here and in what follows, we refer the reader to
  standard textbooks like
  \cite{Hatcher:AlgebraicTopology-2002,Munkres:AlgebraicTopology-1984}
  for further topological background and various topological notions
  that we leave undefined.} Such \emph{uniform} bounds are often
referred to as \emph{Helly-type theorems}. In discrete geometry,
Helly-type theorems were found in a variety of contexts, from simple
geometric assumptions (\emph{eg.} homothets of a planar convex
curve~\cite{Swanepoel}) to more complicated implicit conditions (sets
of line intersecting prescribed geometric
shapes~\cite{Tverberg-translates,Cremona-convexity,Transversals-spheres},
sets of norms making a given subset of $\R^d$
equilateral~\cite[Theorem~5]{Petty-Minkowski}, etc.) and several
surveys~\cite{e-hrctt-93, w-httgt-04, tancer13} were devoted to this
abundant literature. These Helly numbers give rise to similar
finiteness properties in other areas, for instance in variants of
Whitney's extension problem~\cite{whitnext} or the combinatorics of
generators of certain groups~\cite{farb}.

Many Helly numbers are established via ad hoc arguments, and decades
sometimes go by before a conjectured bound is effectively proven, as
illustrated by Tverberg's proof~\cite{Tverberg-translates} of a
conjecture of Gr\"unbaum~\cite{Grunbaum-squares}. This is true not
only for the quantitative question (\emph{what is the best bound?})
but also for the existential question (\emph{is the Helly number
  uniformly bounded?}); in this example, establishing a first
bound~\cite{Katchalski-translate} was already a matter of decades.
Substantial effort was devoted to identify general conditions ensuring
bounded Helly numbers, and \emph{topological conditions}, as opposed
to more geometric ones like convexity, received particular attention.
The general picture that emerges is that requiring that intersections
have \emph{trivial} low-dimensional homotopy~\cite{m-httucs-97} or
have \emph{trivial} high-dimensional homology~\cite{CdVGG} is
sufficient (see below for a more comprehensive account).

\subsection{Problem statement and results}

In this paper, we focus on the existential question and give the
following new homological sufficient condition for bounding Helly
numbers. Throughout the paper, we consider homology with coefficients\footnote{The choice of $\Z_2$ as the ring of 
coefficient ring has two reasons. On the one hand, we work with the van Kampen obstruction to prove certain non-embeddability results, and the obstruction is naturally defined either for integer coefficients or over $\Z_2$ (it is a torsion element of order two). On the other hand, the Ramsey arguments used in our proof require working over a fixed finite ring of coefficients to ensure a finite number of color classes (\emph{cf.} Claim~\ref{c:uniform-i}).}
in $\Z_2$, and denote by $\tilde\beta_i(X)$ the $i$th reduced Betti
number (over $\Z_2$) of a space~$X$. Furthermore, we use the notation
$\bigcap \mathcal{F}:=\bigcap_{U\in \F} U$ as a shorthand for the
intersection of a family of sets.

\begin{theorem}\label{t:main}
  For any non-negative integers $b$ and $d$ there exists an integer
  $h(b,d)$ such that the following holds. If $\F$ is a finite family
  of subsets of $\R^d$ such that $\tilde\beta_i\pth{\bigcap\G} \le
  b$ for any $\G \subsetneq \F$ and every $0 \le i \le \lceil d/2
  \rceil-1$ then $\F$ has Helly number at most $h(b,d)$.
\end{theorem}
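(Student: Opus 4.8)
The plan is to argue by contraposition: from a finite family of large Helly number we manufacture, using the intersection pattern of the family, a chain map from a high-dimensional skeleton of a simplex into the singular chain complex $C_*(\R^d;\Z_2)$, whose existence contradicts a homological van~Kampen--Flores-type non-embeddability result; such a result is among the homological non-embeddability statements announced in the abstract, and we take it as given. Put $k:=\lceil d/2\rceil$.

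Suppose $\F$ satisfies the hypotheses but has Helly number larger than a bound $h(b,d)$ to be fixed below; it will be obtained by iterating hypergraph Ramsey's theorem $k$ times, the number of colors at each iteration depending only on $b$. By the definition of the Helly number, $\F$ then contains a sub-family $\G=\{G_1,\dots,G_n\}$ with $n>h(b,d)$ that is \emph{minimally non-intersecting}: $\bigcap\G=\emptyset$, while $\bigcap(\G\setminus\{G_i\})\ne\emptyset$ for every $i$. For $\sigma\subseteq[n]$ set $Y_\sigma:=\bigcap_{i\in[n]\setminus\sigma}G_i$. Then: (i) $Y_\sigma\ne\emptyset$ whenever $\emptyset\ne\sigma\subsetneq[n]$, by minimality; (ii) $Y_\sigma\subseteq Y_{\sigma'}$ whenever $\sigma\subseteq\sigma'$; (iii) $Y_\sigma\cap Y_\tau=Y_{\sigma\cap\tau}$, so that $Y_\sigma\cap Y_\tau=Y_\emptyset=\bigcap\G=\emptyset$ as soon as $\sigma\cap\tau=\emptyset$; and (iv) every $Y_\sigma$ with $\sigma\ne\emptyset$ is the intersection of a proper sub-family of $\F$, hence $\tilde\beta_i(Y_\sigma)\le b$ for $0\le i\le k-1$.

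The heart of the matter is to convert the diagram $\sigma\mapsto Y_\sigma$ into a chain map, built by induction on dimension $j=0,1,\dots,k$. At stage $j$ we have a chain map $\varphi$ from the $j$-skeleton of the simplex on a vertex set $W_j$ into $C_*(\R^d;\Z_2)$, where $[n]=W_0\supseteq W_1\supseteq\dots\supseteq W_k$ is a decreasing chain of still-large vertex sets, such that $\varphi$ sends each vertex to a single point and $\operatorname{supp}\varphi(\sigma)\subseteq Y_\sigma$ for every face $\sigma$. For $j=0$ choose, for each $i$, a point $\varphi(\{i\})\in Y_{\{i\}}$, nonempty by~(i). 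Given $\varphi$ on the $(j-1)$-skeleton over $W_{j-1}$: for each $j$-face $\tau\subseteq W_{j-1}$ the chain $\varphi(\partial\tau)$ is a $(j-1)$-cycle supported, by~(ii), inside $Y_\tau$; since $j-1\le k-1$, property~(iv) gives $\tilde\beta_{j-1}(Y_\tau)\le b$ --- this is exactly where the bound $\lceil d/2\rceil-1$ of the hypothesis is used, the extreme case $j=k$ consuming the $(k-1)$-st reduced Betti numbers --- so $\varphi(\partial\tau)$ carries only a bounded amount of homological data. We color the $j$-faces $\tau\subseteq W_{j-1}$ by that data, suitably encoded so as to be comparable across faces although the spaces $Y_\tau$ vary, and apply hypergraph Ramsey to pass to a large $W_j\subseteq W_{j-1}$ on which the color is constant; the coloring and the choices made at lower dimensions are arranged so that, on the resulting monochromatic sub-simplex, every $\varphi(\partial\tau)$ bounds inside $Y_\tau$, and we take $\varphi(\tau)$ to be such a filling, keeping $\operatorname{supp}\varphi(\tau)\subseteq Y_\tau$. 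Choosing $h(b,d)$ so large that $|W_k|\ge 2k+3$, and restricting to the sub-complex $K:=\skelsim{k}{2k+2}$ spanned by $2k+3$ of the vertices of $W_k$, we obtain a chain map
\[
\varphi\colon C_*(K;\Z_2)\longrightarrow C_*(\R^d;\Z_2)
\]
that sends each vertex to a point and satisfies $\operatorname{supp}\varphi(\sigma)\subseteq Y_\sigma$ for all faces $\sigma$. By~(iii), vertex-disjoint faces of $K$ have disjoint images, so $\varphi$ is a $\Z_2$-homological almost-embedding of $K=\skelsim{k}{2k+2}$ into $\R^d$, and hence into $\R^{2k}\supseteq\R^d$. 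Since $2k\ge d$ and $K$ is the $k$-skeleton of the $(2k+2)$-simplex, this contradicts the homological van~Kampen--Flores theorem. Therefore the Helly number of $\F$ is at most $h(b,d)$.

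The main obstacle, and the technical heart of the argument, is the inductive construction of $\varphi$: one must identify the correct bounded ``color'' at each stage --- a homological invariant of the obstruction cycle $\varphi(\partial\tau)$ that is genuinely comparable across different faces $\tau$, even though the ambient spaces $Y_\tau$ differ --- and one must interleave this coloring with the lower-dimensional choices so that monochromaticity really does force the cycles $\varphi(\partial\tau)$ to bound. Once this is in place, the remaining ingredients --- the reduction to a minimal non-intersecting family, the elementary disjointness calculus (i)--(iv), and the appeal to homological van~Kampen--Flores non-embeddability --- are comparatively routine.
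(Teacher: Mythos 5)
Your overall architecture coincides with the paper's: pass to an inclusion-minimal non-intersecting subfamily, build by induction on dimension a nontrivial chain map whose faces are supported in the corresponding intersections, use hypergraph Ramsey at each stage to tame the bounded homology, and conclude via the homological van Kampen--Flores result. But the decisive inductive step is not actually carried out, and as you state it, it fails. Monochromaticity under your coloring only yields that the obstruction cycles $\varphi(\partial\tau)$, for the $j$-faces $\tau$ inside a homogeneous set, all lie in the \emph{same} homology class of the ambient space used for the coloring; it does not make that class zero, and no arrangement of the lower-dimensional choices is indicated (or apparent) that would force it to vanish. In dimension $1$ the classical argument works because among $2^b+1$ candidate points two must be homologous, and \emph{those two} may be chosen as the endpoints of the edge; in dimension $j\ge 2$ there is no such freedom: the cycle $\varphi(\partial\tau)$ is already determined by the lower skeleton, and knowing that two such cycles are homologous only says that their \emph{sum} bounds, not that either one does. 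The paper's way out --- the idea missing from your proposal --- is to precompose with the chain map induced by barycentric subdivision: each $k$-simplex is sent to the sum of the $k!$ top-dimensional simplices of its barycentric subdivision, so after the Ramsey homogenization its boundary maps to a sum of $k!$ mutually homologous $(k-1)$-cycles; since $k!$ is even and coefficients are in $\Z_2$, this sum is null-homologous and can be filled. Without this (or some substitute) parity mechanism the induction does not go through, so the step you defer as ``the technical heart'' is precisely a gap, not bookkeeping.

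A second, related defect: when a filling of $\varphi(\partial\tau)$ exists, it is only guaranteed inside the larger set $Y_M$, where $M\supseteq\tau$ is the Ramsey hyperedge whose intersection carries the bounded Betti numbers, not inside $Y_\tau$ itself; so your requirement $\operatorname{supp}\varphi(\tau)\subseteq Y_\tau$ cannot be maintained. One must instead attach to each top-dimensional face an enlarged index set of ``dummy'' vertices and choose these enlargements so that they meet only in the actual common face (the paper's selection lemma, guaranteeing $W_i\cap W_j=\pi(A_i\cap A_j)$, and the ensuing constraint map $\Phi$ commuting with intersections); only then do vertex-disjoint faces still have disjoint supports, which is what makes the resulting chain map a homological almost-embedding. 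With these two mechanisms supplied, your reduction to the minimal subfamily, the disjointness calculus, and the appeal to the homological van Kampen--Flores theorem are indeed the same as in the paper.
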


Our proof, which we sketch in Subsection~\ref{ss:outline}, hinges
on a general principle, which we learned from
Matou\v{s}ek~\cite{m-httucs-97} but which already underlies the
classical proof of Helly's theorem from Radon's lemma, to derive
Helly-type theorems from results of non-embeddability of certain
simplicial complexes. 
%
%
The novelty of our approach is to examine these non-embeddability
arguments from a homological point of view. This turns out to be a
surprisingly effective idea, as homological analogues of embeddings
appear to be much richer and easier to build than their homotopic
counterparts. More precisely, our proof of Theorem~\ref{t:main} builds
on two contributions of independent interest:

\begin{itemize}
\item We reformulate some non-embeddability results in homological
  terms. We obtain a homological analogue of the Van Kampen-Flores
  Theorem (Corollary~\ref{c:nohomrep}) and, as a side-product, a
  homological version of Radon's lemma (Lemma~\ref{l:HomRadon}). This
  is part of a systematic effort to translate various homotopy
  technique to a more tractable homology setting. It builds on, and
  extends, previous work on homological
  minors~\cite{Wagner:MinorsRandomExpandingHypergraphs-2011}.
\item By working with homology rather than homotopy, we can generalize
  a technique of Matou\v{s}ek~\cite{m-httucs-97} that uses Ramsey's
  theorem to find embedded structures. In this step, roughly speaking,
  we construct some auxiliary (chain) map, with certain homological
  constraints, inductively by increasing the dimension of the preimage
  complex while decreasing the size of it. This approach turned out to
  be also useful in a rather different setting, regarding the
  (non-)embeddability of skeleta of complexes into manifolds
  ~\cite{kuhnel_SoCG15}.
\end{itemize}

\noindent
Our method also proves:
\begin{itemize}
\item A bound of $d+1$ on the Helly number of any family $\F$ of
  subsets of $\R^d$ such that $\tilde\beta_i\pth{\bigcap\G}=0$ for all
  $\G\subsetneq \F$ and all $i\leq d$ (see
  Corollary~\ref{c:helly_hom_radon}), which generalizes Helly's
  topological theorem as the sets of $\F$ are, for instance, not
  assumed to be open. (In the original proof, this assumption
    is crucial and used to ensure that the union of the sets must have
    trivial homology in dimensions larger than $d$; this may fail if
  the sets are not open.)
\item A bound of $d + 2$ on the Helly number of any family $\F$ of
  subsets of $\R^d$ such that $\tilde\beta_i\pth{\bigcap\G}=0$ for all
  $\G\subsetneq \F$ but only for $i\leq \lceil d/2\rceil - 1$ (see
  Corollary~\ref{c:helly_d/2-connected}).
\end{itemize}

\noindent
In both cases the bounds are tight.


Quantitatively, the bound on $h(b,d)$ that we obtain in the general
case is very large as it follows from successive applications of
Ramsey's theorems. The conditions of Theorem~\ref{t:main} relax the
conditions of a Helly-type theorem of Amenta~\cite{a-npihtt-96} (see
the discussion below) for which a lower bound of $b(d+1)$ is
known~\cite{Larman}; a stronger lower bound is possible
for $h(b,d)$ (see Example~\ref{ex:lowerbound}) but we consider 
narrowing this gap further to be outside the scope of the present paper.
Qualitatively, Theorem~\ref{t:main} is sharp in the sense that all
(reduced) Betti numbers $\tilde\beta_i$ with $0 \le i \le \lceil d/2
\rceil-1$ need to be bounded to obtain a bounded Helly number (see
Example~\ref{ex:necessary}).

\begin{example}\label{ex:lowerbound}
First, we observe that for every $d \geq 2$ there is a geometric simplicial
complex $\Gamma_d$ with $d + 2$ vertices, embedded in $\R^d$, such that every 
nonempty induced subcomplex $L$ of $\Gamma_d$ is connected and 
satisfies $\tilde\beta_i(L)=0$ for $i\neq d-1$ and $\tilde\beta_{d-1}(L) \leq 1$.


Indeed, we can take $\Gamma_d$ to be the stellar subdivision of the
$d$-simplex (i.e., the cone over the boundary of the $d$-simplex): Among the
vertices of $\Gamma_d$, $d+1$ of them, say $v_1, \dots, v_{d+1}$, form a $d$-simplex,
and the last one, say $w$, is situated in the barycenter of that simplex. The
maximal simplices of $\Gamma_d$ contain $w$ and $d$ of the vertices $v_i$.  Given an
induced subcomplex $L$, either $L$ misses one of the $v$-vertices, and then $L$
is a $k$-simplex for some $k\leq d$; or $L$ contains all the $v_i$, in which case either $L
= \Gamma_d$ or $L$ is the boundary of the simplex spanned by the vertices $v_i$.

Now, let $\Gamma_{b,d}$ be a complex that consists of $b$ disjoint copies of
$\Gamma_d$, embedded in $\R^d$.
For a vertex $v$ of $\Gamma_{b,d}$, let $U_v$ be the union of all simplices of
$\Gamma_{b,d}$ not containing $v$
(i.e., $U_v$ is the geometric realization of the induced subcomplex of
$\Gamma_{b,d}$ on all vertices but $v$). We define $\F$ to be the collection of all subcomplexes $F_v$,
where $v$ ranges over all vertices of $\Gamma_{b,d}$. Thus, by construction, $\F$ contains $b(d+2)$ sets,
$\bigcap \F = \emptyset$, and for any nonempty proper subsystem $\G \subset \F$, the intersection 
$\bigcap \G$ is nonempty, and by the properties of $\Gamma_d$, the reduced Betti numbers of $\bigcap \G$ are bounded by $b$.%
\footnote{We remark that this construction can be further improved (at the
cost of simplicity). For example, for $d = 3$, it is possible to find a
geometric simplicial complex $\Gamma'_3$ with six vertices (instead of five) with
properties analogous to $\Gamma_3$: Consider a simplex $\Delta \subseteq \R^3$ with 
vertices $v_1, v_2, v_3$ and $v_4$. Let $b$ the barycenter of this simplex and 
we set $v_5$ to be the barycenter of the triangle $v_1v_2b$ and $v_6$ to be 
the barycenter of $v_3v_4b$. Finally, we set $\Gamma'_3$ to be the subdivision of
$\Delta$ with vertices $v_1, \dots, v_6$ and with maximal simplices $1245$,
$1235$, $3416$, $3426$, $5613$, $5614$, $5623$, and $5624$ where the label $ABCD$ 
stands for $\conv\{v_A,v_B,v_C,v_D\}$. One can check that this indeed yields a simplicial
complex with the required properties. See the $1$-skeleton of
$\Gamma'_3$ in~Figure~\ref{f:linked_triangles}. We believe that an analogous example
can be also constructed for $d \geq 4$.}

\begin{figure}
  \begin{center}
    \includegraphics{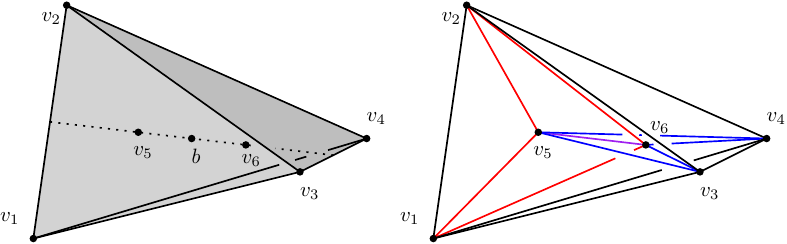}
    \caption{The simplex $\Delta$ (left) and the $1$-skeleton of $\Gamma'_3$ (right).}
    \label{f:linked_triangles}
  \end{center}
\end{figure}

\end{example}
 
\begin{example}\label{ex:necessary}
  Let us fix some $k$ with $0\leq k\leq \lceil d/2 \rceil-1$.  For $n$
  arbitrarily large, consider a geometric realization in $\R^d$ of the
  $k$-skeleton of the $(n-1)$-dimensional simplex (see
  \cite[Section~1.6]{Matousek:BorsukUlam-2003}); more specifically,
  let $V=\{v_1,\ldots,v_n\}$ be a set of points in general position in
  $\R^d$ 
  and consider
  all geometric simplices $\sigma_A:=\conv(A)$ spanned by subsets
  $A\subseteq V$ of cardinality $|A|\leq k+1$.  

  Similarly as in the previous example, let $U_j$ be the union of 
  all the simplices not containing the vertex $v_j$, 
  for $1 \leq j \leq n$. We set $\F = \{U_1, \dots,U_n\}$. Then,
  $\bigcap \F=\emptyset$, and for any proper sub-family
  $\mathcal{G}\subsetneq \mathcal{F}$, the intersection $\bigcap \G$
  is either~$\R^d$ (if $\G=\emptyset$) or (homeomorphic to) the
  $k$-dimensional skeleton of a $(n-1-|\G|)$-dimensional simplex.
  Thus, the Helly number of $\mathcal{F}$ equals $n$.  Moreover, the
  $k$-skeleton $\skelsim{k}{m-1}$ of an $(m-1)$-dimensional simplex
  has reduced Betti numbers $\tilde\beta_i=0$ for $i\neq k$ and
  $\tilde\beta_k=\binom{m-1}{k+1}$. Thus, we can indeed obtain
  arbitrarily large Helly number as soon as at least one
  $\tilde\beta_k$ is unbounded.
\end{example}

\subsection{Relation to previous work}

The search for topological conditions that ensure bounded Helly
numbers started with Helly's topological theorem~\cite{h-usvam-30}
(see also~\cite{d-httdb-70} for a modern version of the proof) and
organized along several directions related to classical questions in
topology. Theorem~\ref{t:main} unifies topological conditions
originating from two different approaches:
\begin{itemize}

\item Helly-type theorem can be derived from non-embeddability
  results, in the spirit of the classical proof of Helly's theorem
  from Radon's lemma. Using this approach,
  Matou\v{s}ek~\cite{m-httucs-97} showed that it is sufficient to
  control the \emph{low-dimensional homotopy} of intersections of
  sub-families to ensure bounded Helly numbers: for any non-negative
  integers $b$ and $d$ there exists a constant $c(b,d)$ such that any
  finite family of subsets of $\R^d$ in which every sub-family
  intersects in at most $b$ connected components, each \emph{$\pth{
  \lceil d/2 \rceil-1}$-connected}, has Helly number at most $c(b,d)$. 
  (We recall that a
    topological space $X$ is $k$-connected, for some integer $k\geq
    0$, if every continuous map $S^i\to X$ from the $i$-dimensional
    sphere to $X$, $0\leq i\leq k$, can be extended to a map
  $D^{i+1}\to X$ from the $(i+1)$-dimensional disk to $X$.) 
    By Hurewicz' Theorem and the
  Universal Coefficient Theorem \cite[Theorem~4.37 and
  Corollary~3A.6]{Hatcher:AlgebraicTopology-2002}, a $k$-connected
  space $X$ satisfies $\tilde\beta_i(X)=0$ for all $i\leq k$. Thus,
  our condition indeed relaxes Matou\v{s}ek's, in two ways: by using
  $\Z_2$-homology instead of the homotopy-theoretic assumptions of
  $k$-connectedness\footnote{We also remark that our condition can be
    verified algorithmically since Betti numbers are easily
    computable, at least for sufficiently nice spaces that can be
    represented by finite simplicial complexes, say. By contrast, it
    is algorithmically undecidable whether a given $2$-dimensional
    simplicial complex is $1$-connected, see, e.g., the survey
    \cite{Soare:ComputabilityDifferentialGeometry-2004}.}, and by
  allowing an arbitrary fixed bound $b$ instead of $b=0$.



\item Helly's topological theorem can be easily derived from classical
  results in algebraic topology relating the homology/homotopy of the
  nerve of a family to that of its union: Leray's \emph{acyclic cover
    theorem}~\cite[Sections III.4.13, VI.4 and VI.13]{Br-sheaf-97} for
  homology, and Borsuk's \emph{Nerve
    theorem}~\cite{b-iscsc-48,b-nfhg-03} for homotopy (in that case
  one considers finite open \emph{good covers}\footnote{An open good
    cover is a finite family of open subsets of $\R^d$ such that the
    intersection of any sub-family is either empty or is contractible
    (and hence, in particular, a homology cell).}). More general Helly
  numbers were obtained via this approach by Dugundji~\cite{Dugundji},
  Amenta~\cite{a-npihtt-96}\footnote{The role of nerves is implicit in
    Amenta's proof but becomes apparent when compared to an earlier
    work of Wegner~\cite{Wegner} that uses similar ideas.}, Kalai and
  Meshulam~\cite{km-lnpthtt-08}, and\footnote{The result of Colin de
    Verdi\`ere et al.~\cite{CdVGG} holds in any paracompact
    topological space; Theorem~\ref{t:main} only subsumes the $\R^d$
    case.}  Colin de Verdi\`ere et al.~\cite{CdVGG}. The outcome is
  that if a family of subsets of $\R^d$ is such that any sub-family
  intersects in at most $b$ connected components, each a homology cell
  (over $\Q$), then it has Helly number at most $b(d+1)$. This
  therefore relaxes Helly's original assumption by allowing
  intersections of sub-families to have $\tilde{\beta}_0$'s bounded by
  an arbitrary fixed bound $b$ instead of $b=0$.  Theorem~\ref{t:main}
  makes the same relaxation for the $\tilde{\beta}_1$'s,
  $\tilde{\beta}_2$'s, $\ldots \tilde{\beta}_{\lceil d/2 \rceil-1}$'s
  and drops \emph{all} assumptions on higher-dimensionnal homology,
  including the requirement that sets be open (which is used to
  control the $(>d)$-dimensional homology of intersections).
\end{itemize}

\noindent
Let us highlight two Helly-type results that stand out in this line of
research as \emph{not} subsumed (qualitatively) by
Theorem~\ref{t:main}. On the one hand, Eckhoff and Nischke~\cite{EN}
gave a purely combinatorial argument that derives the theorems of
Amenta~\cite{a-npihtt-96} and Kalai and Meshulam~\cite{km-lnpthtt-08}
from Helly's convex and topological theorems. On the other hand,
Montejano~\cite{Montejano-Berge13} relaxed Helly's original assumption
on the intersection of sub-families of size $k \le d+1$ from being a
homology cell into having trivial $d-k$ homology (so only one Betti
number needs to be controlled for each intersection, but it must be
zero). These results neither contain nor are contained in
Theorem~\ref{t:main}.


We remark that another non-topological structural condition, known to
ensure bounded Helly numbers, also falls under the umbrella of
Theorem~\ref{t:main}. As observed by Motzkin~\cite[Theorem~7]{Motzkin}
(see also Deza and Frankl~\cite{DezaFrankl-Algebraic}), any family of
real algebraic subvarieties of $\R^d$ defined by polynomials of degree
at most $k$ has Helly number bounded by a function of $d$ and $k$
(more precisely, by the dimension of the vector subspace of
$\R[x_1,x_2, \ldots, x_d]$ spanned by these polynomials); since the
Betti numbers of an algebraic variety in $\R^n$ can be bounded in
terms of the degree of the polynomials that define
it~\cite{Milnor,Thom}, this also follows from Theorem~\ref{t:main}.
We give some other examples in Section~\ref{s:further}, where we
easily derive from Theorem~\ref{t:main} generalizations of various
existing Helly-type theorems.
%

\bigskip

Note that Theorem~\ref{t:main} is similar, in spirit, to some of the
general relations between the growth of Betti numbers and
\emph{fractional} Helly theorems conjectured by Kalai and
Meshulam~\cite[Conjectures~6 and~7]{kalai_conjectures}.  Kalai and
Meshulam, in their conjectures, allow a polynomial growth of the Betti
numbers in $|\bigcap \G|$. As the following example shows,
Theorem~\ref{t:main} is also sharp in the sense that even a linear
growth of Betti numbers, already in $\R^1$, may yield unbounded Helly
numbers. In particular, the conjectures of Kalai and Meshulam cannot be
strengthened to include Theorem~\ref{t:main}.

\begin{example}
  Consider a positive integer $n$ and open intervals $I_i := (i - 1.1;
  i + 0.1)$ for $i \in [n]$. Let $X_i := [0,n] \setminus I_i$.  The
  intersection of all $X_i$ is empty but the intersection of any
  proper subfamily is nonempty. In addition, the intersection of $k$
  such $X_i$ can be obtained from $[0,n]$ by removing at most $k$ open
  intervals, thus the reduced Betti numbers of such an intersection are
  bounded by $k$.
\end{example}

\subsection{Further consequences}\label{s:further}

We conclude this introduction with a few implications of
our main result.

\paragraph{New geometric Helly-type theorems.}

The main strength of our result is that very weak topological assumptions
on families of sets are enough to guarantee a bounded Helly number.
This can be used to identify new Helly-type theorems, for instance by easily
detecting generalizations of known results, as we now
illustrate on two Helly-type theorems of Swanepoel.

A first example is given by a Helly-type theorem for hollow
boxes~\cite{Swanepoel-hollow}, which generalizes (qualitatively) as
follows:


%
%


\begin{corollary}
\label{cor:generalized-swanepoel}
  For all integers $s,d\geq 1$, there exists an integer $h'(s,d)$ such
  that the following holds. Let $S$ be a set of $s$ nonzero vectors in $\R^d$,
  and let $\F=\{U_1,U_2, \ldots, U_n\}$ where each $U_i$ is
  a polyhedral subcomplex of some polytope $P_i$ in $\R^d$ which can be obtained as an
  intersection of half-spaces with normal vectors 
  in $S$. Then $\F$ has Helly number at most $h'(s,d)$.
\end{corollary}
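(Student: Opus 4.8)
The plan is to derive the statement directly from Theorem~\ref{t:main}: it suffices to produce a bound $b=b(s,d)$ such that $\tilde\beta_i\bigl(\bigcap\G\bigr)\le b$ for \emph{every} subfamily $\G\subseteq\F$ and \emph{every} $i$ (which is more than Theorem~\ref{t:main} asks for), and then set $h'(s,d):=h(b(s,d),d)$.

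First I would record a normal-form observation. If $P$ is any polytope in $\R^d$ that is an intersection of half-spaces with outer normal vectors in the fixed $s$-element set $S=\{w_1,\dots,w_s\}$, then grouping these half-spaces by normal direction and keeping, in each direction, only the tightest one, exhibits $P$ as the intersection of at most $s$ half-spaces; consequently every face of $P$ is cut out by turning some subset of these $\le s$ inequalities into equalities, so $P$ has at most $2^{s}$ faces. The same applies to $Q:=\bigcap_{i\in\G}P_i$, since $Q$ is again an intersection of half-spaces with normals in $S$; hence $Q$ is a polytope whose number of faces is at most $2^{s}$, \emph{independently of} $\lvert\G\rvert$.

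The heart of the argument is to show that $\bigcap_{i\in\G}U_i$ is (the realization of) a subcomplex of the face complex $\mathcal{C}(Q)$ of $Q$. Fix $i\in\G$ and a face $F$ of $P_i$, say $F=\{x\in P_i:\langle w,x\rangle=c_w\ \text{for all }w\in T\}$ where $T\subseteq S$ is the set of facet normals tight along $F$. Since $Q\subseteq P_i$, each hyperplane $\{\langle w,\cdot\rangle=c_w\}$ with $w\in T$ is a supporting hyperplane of $Q$ (with $Q$ contained in the closed half-space $\langle w,\cdot\rangle\le c_w$), so $Q\cap\{\langle w,\cdot\rangle=c_w\}$ is empty or a face of $Q$; therefore $F\cap Q=\bigcap_{w\in T}\bigl(Q\cap\{\langle w,\cdot\rangle=c_w\}\bigr)$ is empty or a face of $Q$, an intersection of faces of a polytope being again a face. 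As $U_i$ is a union of closed faces of $P_i$, it follows that $U_i\cap Q$ is a union of closed faces of $Q$, i.e. a subcomplex of $\mathcal{C}(Q)$; hence $\bigcap_{i\in\G}U_i=\bigcap_{i\in\G}(U_i\cap Q)$ is an intersection of subcomplexes of $\mathcal{C}(Q)$, and therefore itself a subcomplex of $\mathcal{C}(Q)$ with at most $2^{s}$ cells. Since a reduced $\Z_2$-Betti number is bounded by the number of cells in the respective dimension, we get $\tilde\beta_i\bigl(\bigcap\G\bigr)\le 2^{s}$ for all $i$ (and all reduced Betti numbers vanish when $\G=\emptyset$, as then $\bigcap\G=\R^d$). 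Plugging $b(s,d)=2^{s}$ (which could be sharpened to the Upper-Bound-Theorem estimate $O(s^{\lfloor d/2\rfloor})$) into Theorem~\ref{t:main} finishes the proof.

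The main obstacle, and the one point that genuinely needs care, is the subcomplex claim: one must make sure that passing from $P_i$ to the possibly much smaller and lower-dimensional polytope $Q$ does not create pieces $F\cap Q$ that fail to be faces of $Q$ — which is exactly what the supporting-hyperplane argument above rules out — and one must invoke the standard fact that, for subcomplexes $K_1,\dots,K_m$ of a common polyhedral complex, $\lvert\bigcap_j K_j\rvert=\bigcap_j\lvert K_j\rvert$, so that the geometric intersection $\bigcap_{i\in\G}U_i$ really is the realization of a subcomplex of $\mathcal{C}(Q)$. Everything else is bookkeeping, including checking that the original hollow-box theorem of Swanepoel~\cite{Swanepoel-hollow} is the special case $S=\{\pm e_1,\dots,\pm e_d\}$ with each $U_i$ the full boundary complex of a box.
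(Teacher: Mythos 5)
Your proof is correct and follows essentially the same route as the paper: intersect the ambient polytopes, show that $\bigcap\G$ is a polyhedral subcomplex of the intersection polytope $Q$ (whose face count is bounded in terms of $s$ and $d$ because its facet normals lie in $S$), bound the reduced $\Z_2$-Betti numbers by the number of faces, and invoke Theorem~\ref{t:main}. The only differences are cosmetic: you prove the subcomplex claim directly via supporting hyperplanes where the paper cites a reference, and you use the crude $2^s$ face bound where the paper appeals to the dual upper bound theorem.
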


\noindent
Swanepoel's result corresponds to the case $S = \{\pm e_1,
\pm e_2, \ldots, \pm e_d\}$ where $e_1,\ldots, e_d$ form a basis
of $\R^d$.

\begin{proof}[Proof of Corollary~\ref{cor:generalized-swanepoel}]
  We verify the assumptions of Theorem~\ref{t:main}, i.e., we
  consider a subfamily $\G = \{U_i\colon i \in I\} \subseteq \F$ and we check that
  $\tilde\beta_i(\bigcap \G)$ is bounded by a function of $s$ and $d$ for any $i \geq 0$
  (to apply Theorem~\ref{t:main}, 
  it would be sufficient to consider $i \leq \lceil d/2 \rceil - 1$, but in the present setting, there is no
  difference in reasoning for other values of $i$).  

  Let $\P = \P(S)$ be the set of all polytopes which can be obtained as an
    intersection of half-spaces with normal vectors 
    in $S$. Let $P_i \in \P$ be a polytope such that $U_i$ is a polyhedral subcomplex of
  $P_i$. 
  
  Let us consider the polytope $P = \bigcap_{i \in I} P_i$. From the definition of
  $\P$ we immediately deduce that $P \in \P$. Moreover, the intersection $U :=
  \bigcap \G$ is a polyhedral subcomplex of $P$. (The faces $U$ are of form
  $\bigcap_{i \in I} \sigma_i$ where $\sigma_i$ is a face of $U_i$;
  see~\cite[Exercise~2.8(5) + hint]{rourke-sanderson72}.)


  Since $P \in \P$ we deduce that it has at most $2s$ facets. By the
  dual version of the upper bound theorem~\cite[Theorem~8.23]{ziegler95}, the number of faces of $P$ is bounded by a function of $s$
  and $d$. Consequently, $\tilde\beta_i(U)$ is bounded by a function of $s$
    and $d$, since $U$ is a subcomplex of $P$.
\end{proof}

A second example concerns a Helly-type theorem for families of
translates and homothets of a convex curve~\cite{Swanepoel}, which are
special cases of families of \emph{pseudo-circles}. More generally, a
family of \emph{pseudo-spheres} is defined as a set $\F=\{U_1, U_2,
\ldots, U_n\}$ of subsets of $\R^d$ such that or any $\G \subseteq
\F$, the intersection $\cap(\G)$ is homeomorphic to a $k$-dimensional
sphere for some $k \in \{0,1,\ldots, d-1\}$ or to a single point.
The case $b=1$ of Theorem~\ref{t:main} immediately implies the following:

\begin{corollary}\label{c:pseudo}
  For any integer $d$ there exists an integer $h(d)$ such that the
  Helly number of any finite family of pseudo-spheres in $\R^d$ is at
  most $h(d)$.
\end{corollary}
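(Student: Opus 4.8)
The plan is to deduce Corollary~\ref{c:pseudo} directly from Theorem~\ref{t:main} by checking that a family of pseudo-spheres satisfies the hypotheses of the theorem with the parameter $b=1$. Recall that a family $\F=\{U_1,\ldots,U_n\}$ is a family of pseudo-spheres in $\R^d$ if for every $\G\subseteq\F$ the intersection $\bigcap\G$ is homeomorphic to a $k$-sphere $\mathbb{S}^k$ for some $k\in\{0,1,\ldots,d-1\}$, or to a single point. So first I would fix such a family and an arbitrary proper subfamily $\G\subsetneq\F$, and observe that $\bigcap\G$ is then either a point or some $\mathbb{S}^k$ with $0\le k\le d-1$.

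Next I would recall the (reduced, $\Z_2$-coefficient) Betti numbers of these spaces: a single point has $\tilde\beta_i=0$ for all $i$, while the sphere $\mathbb{S}^k$ has $\tilde\beta_i=1$ if $i=k$ and $\tilde\beta_i=0$ otherwise. Hence in all cases $\tilde\beta_i\pth{\bigcap\G}\le 1$ for every $i\ge 0$, and in particular for every $0\le i\le \lceil d/2\rceil-1$. This is exactly the hypothesis of Theorem~\ref{t:main} with $b=1$. Applying the theorem, $\F$ has Helly number at most $h(1,d)$, and setting $h(d):=h(1,d)$ gives the statement.

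There is essentially no obstacle here: the content of the corollary is entirely in Theorem~\ref{t:main}, and the only thing to verify is the elementary homology computation for spheres and points, together with the remark (already made in the excerpt preceding the statement) that the case $b=1$ of the main theorem is what is being invoked. If I wanted to be slightly more careful I would note that the bound on the Betti numbers is required only for proper subfamilies $\G\subsetneq\F$, which is automatically covered since the definition of pseudo-spheres quantifies over all $\G\subseteq\F$; and that no openness or other regularity assumption on the $U_i$ is needed, since Theorem~\ref{t:main} imposes none. The ``hard part'', if any, is purely bookkeeping: matching the quantifier $0\le i\le\lceil d/2\rceil-1$ in the theorem against the ``all $i$'' bound we actually get for pseudo-spheres, which is trivially fine.
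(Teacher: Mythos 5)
Your proposal is correct and is exactly the argument intended in the paper, which derives the corollary as the case $b=1$ of Theorem~\ref{t:main} after the elementary observation that a point or a sphere $\s^k$ has all reduced $\Z_2$-Betti numbers at most $1$. Nothing further is needed.
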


\noindent
We note that the special case of Euclidean spheres falls under the umbrella of intersections 
of real algebraic varieties of bounded degree, for which the Helly number is bounded as observed by Motzkin and others, as discussed above~\cite{Maehara,DezaFrankl-Algebraic}. 
For the more general setting pseudo-spheres, however, the above result is new, to the best of our knowledge. 
An optimal bound $h(d) = d + 1$ as soon as the family contains at least $d+3$
pseudo-spheres was obtained by Sosnovec~\cite{sosnovec15}, after discussing the
contents of Corollary~\ref{c:pseudo} with us.


\paragraph{Generalized linear programming.}

Theorem~\ref{t:main} also has consequences in the direction of
optimization problems. Various optimization problems can be formulated
as the minimization of some function $f: \R^d \to \R$ over some
intersection $\bigcap_{i=1}^n C_i$ of subsets $C_1,C_2, \ldots, C_n$
of $\R^d$. If, for $t \in \R$, we let $L_t = f^{-1}\pth{(-\infty,t]}$
and $\F_t = \{C_1, C_2, \ldots, C_n, L_t\}$ then
\[ \min_{x \in \bigcap_{i=1}^n C_i} f(x) = \min\left\{t \in \R:
  \bigcap \F_t \neq \emptyset\right\}.\]
If the Helly number of the families $\F_t$ can be bounded
\emph{uniformly} in $t$ by some constant $h$ then there exists a
subset of $h-1$ constraints $C_{i_1}, C_{i_2}, \ldots, C_{i_{h-1}}$
that suffice to define the minimum of $f$:
\[ \min_{x \in \bigcap_{i=1}^n C_i} f(x) = \min_{x \in
  \bigcap_{j=1}^{h-1} C_{i_j}} f(x).\]
A consequence of this observation, noted by Amenta~\cite{a-httgl-94},
is that the minimum of $f$ over $C_1 \cap C_2 \cap \ldots \cap C_n$
can\footnote{This requires $f$ and $C_1, C_2, \ldots, C_n$ to be
  generic in the sense that the number of minima of $f$ over $\cap_{i
    \in I} C_i$ is bounded uniformly for $I\subseteq \{1,2,\ldots,
  n\}$.}  be computed in randomized $O(n)$ time by \emph{generalized
  linear programming}~\cite{sw-cblprp-92} (see de Loera et al.~\cite{deLoera_RS}
for other uses of this idea). Together with Theorem~\ref{t:main}, this
implies that an optimization problem of the above form can be solved
in randomized linear time if it has the property that every
intersection of some subset of the constraints with a level set of the
function has bounded ``topological complexity'' (measured in terms of
the sum of the first $\lceil d/2 \rceil$ Betti numbers). Let us
emphasize that this linear-time bound holds in a real-RAM model of
computation, where any constant-size subproblems can be solved in
$O(1)$-time; it therefore concerns the \emph{combinatorial difficulty}
of the problem and says nothing about its \emph{numerical difficulty}.

\subsection{Proof outline}
\label{ss:outline}
  Let us briefly sketch the proof of Theorem~\ref{t:main}.

  \bigskip
  
  Consider the simplified setting where we have
  subsets $A_1, A_2, \ldots, A_5$ of $\R^2$ such that any
  four have non-empty intersection and any three have path-connected
  intersection. Draw $K_5$, the complete graph on $5$ vertices,
  \emph{inside} the union of the five sets by picking points $p_i \in
  \cap_{j \neq i} A_j$ and connecting any two points $p_u$, $p_v$
  inside the intersection $\cap_{j \neq u,v} A_j$. The (stronger form
  of the) non-planarity of $K_5$ ensures that two edges that share no
  vertex must cross, and the intersection point witnesses that
  $\cap_{i=1}^5 A_i$ is non-empty (\emph{cf.} Figure~\ref{f:k5}). This
  idea, more systematically, ensures that any family of planar sets
  with path-connected intersections has Helly number at most $4$.
  
  Now consider subsets $A_1, A_2, \ldots, A_n$ of $\R^2$ such that the
  intersection of any proper subfamily is nonempty and has at most $b$
  path-connected components. We can again pick $p_i \in \cap_{j \neq
    i} A_j$. Two points $p_u$, $p_v$ may end up in different connected
  components of $\cap_{j \neq u,v} A_j$, but among any $b+1$ points
  $p_{i_1}, p_{i_2}, \ldots, p_{i_{b+1}}$, two can be connected inside
  $\cap_{j \neq i_1, i_2, \ldots, i_{b+1}} A_j$. We can thus still
  draw a large graph inside the union, but each edge misses an extra
  set of $A_i$'s. A Ramsey-type argument ensures that for $n$ large
  enough, we can find a copy of $K_5$ where each edge misses distinct
  extra sets, and therefore that $\cap_{i=1}^n A_i$ is non-empty. 

  These arguments generalize to higher dimension: once we can draw
  $p_up_v$, $p_vp_w$ and $p_up_w$ inside the intersection of some
  family of subsets, we can fill the triangle in that intersection if
  it is $1$-connected (in homotopy). More systematically, given a
  family of subsets of $\R^{2k}$ whose proper intersections are
  $k$-connected (in homotopy), we can draw $\skelsim{k}{2k+2}$ inside
  their union and find, via the Van Kampen-Flores theorem, that the
  complete intersection is non-empty (and similarly in odd
  dimensions). This is, in short, Matou\v{s}ek's
  theorem~\cite{m-httucs-97}. 

  \bigskip

  \noindent
  \begin{minipage}{11cm}
    \quad We extend Matou\v{s}ek's approach to allow intersections to
    have bounded but non-trivial homotopy in dimension $1$ or
    more. The main difficulty is that we may not be able to fill any
    elementary cycle: as illustrated on the right-hand figure, for $n$
    arbitrarily large, $K_n$ can be drawn in an annulus so that no
    triangle can be filled. There still exist cycles that can be
    filled, for instance $2435$; they are simply not boundaries of
    triangles. Such cycles are more easily found by working with the
    additive structure of $\Z_2$-homology: the sum of any two
    homologous cycles is a boundary (and therefore ``fillable''), and
    many pairs of homologous cycles exist because a bounded Betti
    number ensures a constant number of homology classes.
\end{minipage}
\hfill  
\begin{minipage}{5cm}
  \begin{center}\includegraphics[width=3.5cm,keepaspectratio]{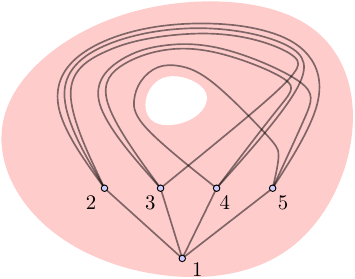}\end{center}
\end{minipage}

\smallskip

The key idea is, then, to look for sufficiently large sets of vertices
where, as in the example above, every triangle has the same
$\Z_2$-homology, and to map the \emph{barycentric subdivision} of a
triangle to these vertices (as described in Figure~\ref{f:alpha}); the
resulting sum of evenly many homologous simplices must be a
boundary. These large sets of vertices with homologous triangles exist
as soon as the Betti number is bounded: indeed, one can simply apply
Ramsey's theorem to the $3$-uniform hypergraph on the vertices where
every triangle is ``colored'' by its homology class. This idea
  generalizes to arbitrary dimension.

Because of the switch to homology, we do not build a map of
$\skelsim{k}{2k+2}$ into the target space $\R^d$ ($d = 2k$ or $d =
2k-1$) but only a chain map from the simplicial chain complex of
$\skelsim{k}{2k+2}$ into the singular chain complex of $\R^d$.  Hence,
we can no longer rely on the classical non-embeddability results and
have to develop homological analogs.

\bigskip


We set up our homological machinery in
Section~\ref{sec:homological-ae} (homological almost-embeddings,
homological Van Kampen-Flores Theorem, and homological Radon
lemma). We then spell out, in Section~\ref{s:history},
variations of the technique that derives Helly-type theorems from
non-embeddability. We finally introduce our refinement of this
technique and the proof of Theorem~\ref{t:main} in
Section~\ref{s:ccm}.

\subsection{Notation}


We assume that the reader is familiar with basic topological notions
and facts concerning simplicial complexes and singular and simplicial
homology, as described in textbooks like
\cite{Hatcher:AlgebraicTopology-2002,Munkres:AlgebraicTopology-1984}.
As remarked above, throughout this paper we will work with homology
with $\Z_2$-coefficients unless explicitly stated otherwise.
Moreover, while we will consider singular homology groups for
topological spaces in general, for simplicial complexes we will work
with simplicial homology groups. In particular, if $X$ is a
topological space then $C_\ast(X)$ will denote the singular chain
complex of $X$, while if $K$ is a simplicial complex, then $C_\ast(K)$
will denote the simplicial chain complex of $K$ (both with
$\Z_2$-coefficients).

\bigskip

We use the following notation. Let $K$ be a (finite, abstract)
simplicial complex. The \emph{underlying topological space} of $K$ is
denoted by $|K|$. Moreover, we denote by $K^{(i)}$ the
\emph{$i$-dimensional skeleton} of $K$, i.e., the set of simplices of
$K$ of dimension at most $i$; in particular $K^{(0)}$ is the set of
vertices of $K$.  For an integer $n\geq 0$, let $\simplex{n}$ denote
the $n$-dimensional simplex.



\subsection*{Acknowledgments}

We would like to express our immense gratitude to Ji\v{r}{\'\i}
Matou\v{s}ek, not only for raising the problem addressed in the
present paper and valuable discussions about it, but, much more
generally, for the privilege of having known him, as our teacher,
mentor, collaborator, and friend. Through his tremendous depth and
insight, and the generosity with which he shared them, he greatly
influenced all of us.

We further thank J\"{u}rgen Eckhoff for helpful comments on a
preliminary version of the paper, and Andreas Holmsen and Gil Kalai for
providing us with useful references.


\section{Homological Almost-Embeddings}
\label{sec:homological-ae}

In this section, we define \emph{homological almost-embedding}, an
analogue of topological embeddings on the level of chain maps, and
show that certain simplicial complexes do not admit homological
almost-embeddings in $\R^d$, in analogy to classical non-embeddability
results due to Van Kampen and Flores. In fact, when this comes at no
additional cost we phrase the auxiliary results in a slightly more
general setting, replacing $\R^d$ by a general topological space
$\Rspace$. Readers that focus on the proof of Theorem~\ref{t:main} can
safely replace every occurrence of $\Rspace$ with $\R^d$.

\subsection{Non-Embeddable Complexes}

We recall that an \emph{embedding} of a finite simplicial complex $K$ into
  $\R^d$ is simply an injective continuous map $|K|\to \R^d$. The fact
  that the complete graph on five vertices cannot be embedded in the
  plane has the following generalization.

\begin{proposition}[Van
  Kampen~\cite{vanKampen:KomplexeInEuklidischenRaeumen-1932},
  Flores~\cite{Flores:NichtEinbettbar-1933}]
  \label{prop:vK-F} For $k\geq 0$, the complex $\skelsim{k}{2k+2}$, the
  $k$-dimensional skeleton of the $(2k+2)$-dimensional simplex, cannot
  be embedded in $\R^{2k}$.
\end{proposition}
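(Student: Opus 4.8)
The plan is to prove the Van Kampen--Flores theorem via the deleted product / deleted join obstruction, following the classical Borsuk--Ulam-style argument. First I would recall the notion of the \emph{deleted product} $\delprod{K}$ of a simplicial complex $K$: the subcomplex of $K \times K$ consisting of all products $\sigma \times \tau$ of disjoint simplices $\sigma, \tau \in K$. The key point is that an embedding $f\colon |K| \to \R^d$ induces a continuous, $\Z_2$-equivariant map $|\delprod{K}| \to S^{d-1}$, by sending a pair $(x,y)$ of distinct points to $(f(x)-f(y))/\|f(x)-f(y)\|$; the $\Z_2$-action is the coordinate swap on the source and the antipodal map on the target. (One should use a suitable model for the deleted product, e.g.\ the simplicial deleted product or its realization; the combinatorial version suffices.) Hence, to prove non-embeddability it is enough to show that for $K = \skelsim{k}{2k+2}$ there is no such equivariant map $|\delprod{K}| \to S^{2k-1}$.

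The main step is the identification of the equivariant homotopy type of $|\delprod{\skelsim{k}{2k+2}}|$. The crucial fact, due essentially to Van Kampen and Flores, is that the deleted product (or, more conveniently, the deleted \emph{join}) of the $k$-skeleton of the $(2k+2)$-simplex is $\Z_2$-homotopy equivalent to a sphere of the ``wrong'' dimension. Concretely, working with the deleted join, one has
\[
  (\skelsim{k}{2k+2})_{\Diagonal}^{*2} \;\cong\; (\simplex{2k+2})_{\Diagonal}^{*2}\big/(\text{something}) \;\simeq_{\Z_2}\; S^{2k+1}
\]
-- more precisely, the deleted join of the full $(2k+2)$-simplex is $S^{2k+3}$, and restricting to the $k$-skeleton in each join factor cuts the dimension down so that one obtains a free $\Z_2$-space which is $(2k)$-connected (it has the homotopy type of a wedge of $(2k+1)$-spheres, or at any rate is $2k$-connected). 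I would verify the connectivity by a direct Mayer--Vietoris or nerve-type computation on the deleted join, or cite the standard fact that the deleted join of $\skelsim{k}{2k+2}$ is $2k$-connected. Then Borsuk--Ulam (in the form: there is no $\Z_2$-map from a free $\Z_2$-space that is $n$-connected to $S^n$) applied with $n = 2k$ shows there is no $\Z_2$-map to $S^{2k}$, equivalently $S^{2k-1}$ after one suspension bookkeeping between the deleted product and deleted join pictures.

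Assembling: if $\skelsim{k}{2k+2}$ embedded in $\R^{2k}$, we would get a $\Z_2$-map $|\delprod{\skelsim{k}{2k+2}}| \to S^{2k-1}$; passing to deleted joins adds a suspension on the source and replaces $S^{2k-1}$ by $S^{2k}$ on the target, yielding a $\Z_2$-map from the $(2k)$-connected free $\Z_2$-space $(\skelsim{k}{2k+2})_{\Diagonal}^{*2}$ to $S^{2k}$, contradicting Borsuk--Ulam. Alternatively, and perhaps cleaner to write, one uses the Van Kampen obstruction directly: the obstruction to embedding $\skelsim{k}{2k+2}$ in $\R^{2k}$ lives in $H^{2k}_{\Z_2}(\delprod{\skelsim{k}{2k+2}}; \Z_2) \cong \Z_2$ and one shows it is the nonzero element, by reducing to the minimal non-embeddable case (the model computation is that for $K = \skelsim{k}{2k+2}$ one has exactly $\binom{2k+3}{k+1}\cdot\frac{1}{2}\cdot(\dots)$ pairs contributing an odd intersection number).

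The hard part is the connectivity/homotopy-type computation of the deleted product (or join) of $\skelsim{k}{2k+2}$: showing it is exactly ``one dimension too connected'' relative to the target sphere. Everything else -- constructing the equivariant map from an embedding, invoking Borsuk--Ulam -- is formal. Since the paper's real interest is the \emph{homological} analogue (Corollary~\ref{c:nohomrep}), in writing this up I would actually prefer to prove the stronger homological statement, replacing the equivariant map argument by a computation with the equivariant (co)homology of the deleted product and the fact that a chain-level almost-embedding would force the top equivariant cohomology class to vanish; but for the bare topological Proposition as stated, the deleted-join plus Borsuk--Ulam route above is the most economical.
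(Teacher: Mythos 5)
Your argument is correct in outline, but it takes a genuinely different route from the paper. The paper does not prove Proposition~\ref{prop:vK-F} from scratch: it obtains it from the Gauss-map/obstruction criterion (Proposition~\ref{prop:obstruction-almost-embeddings}, established in Section~\ref{sec:obstructions} via Lemma~\ref{lem:Gauss-map} and Lemma~\ref{lem:obs-no-map-or-chain-map}) together with the non-vanishing $\obs{2k}\left(\skelsim{k}{2k+2}\right)\neq 0$ (Proposition~\ref{obs-nonzero-skeleton}), which is \emph{cited} to van Kampen, Flores and Melikhov rather than re-proved. You instead follow the classical deleted-join/Borsuk--Ulam route: an (almost-)embedding in $\R^{2k}$ yields a $\Z_2$-map from the deleted join of $\skelsim{k}{2k+2}$ to $\s^{2k}$, which is impossible because that deleted join is a $2k$-connected free $\Z_2$-space. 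The connectivity fact you defer to a citation is exactly where the content lies; the cleanest way to get it is Alexander self-duality: on $2k+3$ vertices the $k$-skeleton is its own Alexander dual, so its deleted join is a Bier sphere, a free $\Z_2$-sphere of dimension $2k+1$ (Sarkaria's trick is an alternative that avoids identifying the homotopy type). So your reliance on that classical fact is comparable to the paper's reliance on the cited non-vanishing of the obstruction. As for what each approach buys: the join-plus-index argument is the most economical proof of the bare topological statement, while the paper's cohomological obstruction formulation is chosen because it upgrades verbatim to chain maps (part (b) of Lemma~\ref{lem:obs-no-map-or-chain-map}, hence Proposition~\ref{prop:obstruction-no-ae} and Corollary~\ref{c:nohomrep}), which is what the proof of Theorem~\ref{t:main} actually needs --- a point you correctly anticipate in your closing remark.

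Two technical slips to fix if you write this up. First, the deleted join of the full $(2k+2)$-simplex is $\s^{2k+2}$ (the join of $2k+3$ copies of $\s^{0}$), not $\s^{2k+3}$, and the displayed identification with a quotient ``(something)'' is not meaningful as written --- the Bier-sphere identification should replace it. Second, rather than the vague ``suspension bookkeeping'' between deleted products and deleted joins, it is cleaner to bypass the deleted product entirely: given an almost-embedding $f$, send the join point $t_1x_1\oplus t_2x_2$ to the normalization of $\bigl(t_1-t_2,\ t_1f(x_1)-t_2f(x_2)\bigr)\in\R^{2k+1}$; this is well defined, nowhere zero precisely because $f$ is an almost-embedding, and $\Z_2$-equivariant, giving the desired map to $\s^{2k}$ directly.
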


A basic tool for proving the non-embeddability of a simplicial complex is the so-called \emph{Van Kampen obstruction}.
To be more precise, we emphasize that in keeping with our general convention regarding coefficients, we work with the $\Z_2$-coefficient version\footnote{There is also a version of the Van Kampen obstruction with integer coefficients, which in general yields more precise information regarding embeddability than the $\Z_2$-version, but we will not need this here. We refer to  \cite{Melikhov:vanKampenRelatives-2009} for further background.} of the Van Kampen obstruction, which will be reviewed in some detail
in Section~\ref{sec:obstructions} below.
Here, for the benefit of readers who are willing to accept certain topological
facts as given, we simply collect those statements necessary to motivate the
definition of homological almost-embeddings and to follow the logic of the proof of Theorem~\ref{t:main}.

Given a simplicial complex $K$, one can define, for each $d\geq 0$, a certain
cohomology class $\obs{d}(K)$ that resides in the cohomology group
$H^d(\overline{K})$ of a certain auxiliary complex $\overline{K}$ (the quotient
of the combinatorial deleted product by the natural $\Z_2$-action, see below);
see the paragraph on obstructions following
Lemma~\ref{lem:homological-Gauss-map} for a more proper definition of $\obs
d(K)$.
This cohomology class $\obs{d}(K)$ is called the Van Kampen obstruction to
embeddability into $\R^d$ because of the following fact:

\begin{proposition}
\label{prop:obstruction-almost-embeddings}
Suppose that $K$ is a finite simplicial complex with $\obs{d}(K)\neq 0$. Then $K$ is not embeddable into $\R^d$. In fact, a slightly stronger conclusion holds: there is no \emph{almost-embedding}  $f\colon |K|\to \R^d$, i.e., no continuous map such that the images of disjoint simplices of $K$ are disjoint.
\end{proposition}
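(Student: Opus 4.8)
The plan is to prove the contrapositive: given a continuous map $f\colon |K|\to\R^d$ that sends disjoint simplices of $K$ to disjoint subsets of $\R^d$, I will show that $\obs{d}(K)=0$. The mechanism is the classical deleted-product reformulation of (non-)embeddability, examined at the level of the obstruction: an almost-embedding produces a $\Z_2$-equivariant map from the combinatorial deleted product of $K$ to a sphere, and the Van Kampen obstruction is precisely what obstructs such a map. Throughout, let $\delprod{K}=\bigcup\{\sigma\times\tau:\sigma,\tau\in K,\ \sigma\cap\tau=\emptyset\}\subseteq|K|\times|K|$ denote the combinatorial deleted product, equipped with the free $\Z_2$-action exchanging the two coordinates, so that $\overline{K}=\delprod{K}/\Z_2$.

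The first step is to build the equivariant map, and here the precise form of the hypothesis is exactly what is needed. For a point $(x,y)\in\delprod{K}$, the points $x$ and $y$ lie in simplices $\sigma,\tau\in K$ with $\sigma\cap\tau=\emptyset$, hence $f(x)\neq f(y)$ by the defining property of an almost-embedding; this is the one place where ``images of disjoint simplices are disjoint'' is invoked, and it is why working with $\delprod{K}$, rather than the space-level deleted product $\{(x,y)\in|K|^2:x\neq y\}$ used for honest embeddings, is the correct choice. Consequently the map
\[
  F\colon\delprod{K}\longrightarrow\s^{d-1},\qquad F(x,y)=\frac{f(x)-f(y)}{\lVert f(x)-f(y)\rVert},
\]
is well defined, continuous, and $\Z_2$-equivariant, the coordinate swap on $\delprod{K}$ corresponding to the antipodal involution on $\s^{d-1}$. (If $\delprod{K}=\emptyset$ there is nothing to prove, since then $\obs{d}(K)\in H^d(\overline{K})=0$.)

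The second step is to read off that $\obs{d}(K)=0$. Passing to $\Z_2$-quotients, $F$ descends to a map $\overline{F}\colon\overline{K}\to\RP^{d-1}$ which, composed with the inclusion $\RP^{d-1}\hookrightarrow\RP^{\infty}$, classifies the double cover $\delprod{K}\to\overline{K}$ (equivalently, $F$ is an equivariant map into $\s^{d-1}\subseteq\s^{\infty}$, and $\s^{\infty}$ is a model for $E\Z_2$). Since the generator $\omega\in H^1(\RP^{\infty};\Z_2)$ satisfies $\omega^d=0$ in $H^d(\RP^{d-1};\Z_2)$, naturality gives $\bigl(\overline{F}^*(\omega)\bigr)^d=\overline{F}^*(\omega^d)=0$ in $H^d(\overline{K};\Z_2)$. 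By the description of the $\Z_2$-Van Kampen obstruction recalled in Section~\ref{sec:obstructions} --- namely that $\obs{d}(K)$ is the image of $\omega^d$ under this classifying map, equivalently the primary obstruction to a section of the associated $\s^{d-1}$-bundle over $\overline{K}$ --- this says exactly that $\obs{d}(K)=0$, contrary to the assumption.

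The step requiring the most care is, unsurprisingly, the last one: it rests on the fact that the existence of a \emph{global} equivariant map $\delprod{K}\to\s^{d-1}$ forces the vanishing of $\obs{d}(K)$, which in turn requires pinning down $\obs{d}(K)$ through equivariant obstruction theory (and noting that for this implication only the cup-product relation $\omega^d=0$ in $H^*(\RP^{d-1};\Z_2)$ is used, so no metastable-range assumption enters and higher obstructions play no role). All of this is standard and will be set up in Section~\ref{sec:obstructions}; once that machinery is in place, the argument above is immediate, the only genuinely new ingredient being the elementary construction of $F$ from the almost-embedding.
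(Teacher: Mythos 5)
Your proof is correct and follows essentially the same route as the paper: an almost-embedding yields the equivariant Gauss map $\delprod{K}\to\s^{d-1}$ (the paper's Lemma~\ref{lem:Gauss-map}), and the obstruction then vanishes because the classifying map $\overline{K}\to\RP^\infty$ factors through $\RP^{d-1}$, where the degree-$d$ class dies (the paper's Lemma~\ref{lem:obs-no-map-or-chain-map}(a)). The only difference is that you argue at the level of continuous maps and equivariant homotopy, which suffices for this proposition, whereas the paper runs the same argument through nontrivial equivariant chain maps and chain homotopies because it also needs the stronger chain-level statement for homological almost-embeddings.
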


Another basic fact is the following result (for a short proof see, for instance,   \cite[Example~3.5] {Melikhov:vanKampenRelatives-2009}).

\begin{proposition}[\cite{vanKampen:KomplexeInEuklidischenRaeumen-1932,Flores:NichtEinbettbar-1933}]
\label{obs-nonzero-skeleton}
For every $k\geq 0$, $\obs{2k}\left(\skelsim{k}{2k+2}\right)\neq 0$.
\end{proposition}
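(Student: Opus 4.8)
The plan is to prove $\obs{2k}\pth{\skelsim{k}{2k+2}}\neq 0$ via the interpretation of the $\Z_2$-van Kampen obstruction as a mod-$2$ self-intersection number, exploiting that $\simplex{2k+2}$ has exactly the critical number $2k+3=2(k+1)+1$ of vertices. Write $K:=\skelsim{k}{2k+2}$ and assume $k\geq 1$ (for $k=0$ the claim is trivial, since $\obs{0}$ is the unit of $H^0$ and $\delprod{K}$ is nonempty). First I would record a combinatorial fact: a pair of disjoint $k$-faces of $\simplex{2k+2}$ uses $2k+2$ of its $2k+3$ vertices, so the combinatorial deleted product $\delprod{K}$ is pure of dimension $2k$ (its facets being the cells $\sigma\times\tau$ with $\sigma,\tau$ disjoint $k$-faces), and every $(2k-1)$-cell lies in exactly two facets. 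Hence $\delprod{K}$ is a weak $\Z_2$-pseudomanifold without boundary, and so is the free quotient $\lbar{K}=\delprod{K}/\Z_2$; in particular the sum $z$ of all $2k$-cells of $\lbar{K}$ is a nonzero cycle in $H_{2k}(\lbar{K};\Z_2)$, and therefore $\obs{2k}(K)\neq 0$ as soon as $\obs{2k}(K)$ evaluates to $1$ on $z$.

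Second, I would use the standard cochain representative of $\obs{2k}(K)$: for any PL map $f\colon|K|\to\R^{2k}$ in general position, the function sending a cell $\sigma\times\tau$ (for disjoint $k$-faces $\sigma,\tau$) to $\card\pth{f(\sigma)\cap f(\tau)}\bmod 2$ is a $\Z_2$-cocycle on $\lbar{K}$ that represents $\obs{2k}(K)$. Combined with the first step, it then suffices to exhibit a single general-position map $f$ for which
\[
\sum_{\{\sigma,\tau\}}\card\pth{f(\sigma)\cap f(\tau)}\ \equiv\ 1\pmod 2 ,
\]
the sum running over unordered pairs of disjoint $k$-faces $\sigma,\tau$ of $K$.

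For $f$ I would place the $2k+3$ vertices of $\simplex{2k+2}$ on the moment curve $t\mapsto(t,t^2,\dots,t^{2k})$ at $t=1,2,\dots,2k+3$ and extend affinely. A pair of disjoint $k$-faces $\sigma,\tau$ then maps to two $k$-simplices spanned by $2k+2$ points in general position in $\R^{2k}$; such a configuration has a \emph{unique} Radon partition, and $f(\sigma)\cap f(\tau)$ is nonempty (and then a single point) precisely when $\{\sigma,\tau\}$ realizes this partition. For points on the moment curve the Radon partition is the alternating one (the unique affine dependence has sign-alternating coefficients, by a Vandermonde computation), so exactly one unordered pair $\{\sigma,\tau\}$ of disjoint $k$-faces ``crosses'' for each choice of the $2k+2$ vertices used; since there are $2k+3$ such choices, the displayed sum equals $2k+3$, which is odd, completing the argument. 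This last count, together with the careful identification of $\obs{2k}(K)$ with the intersection cocycle --- the one place where the $\Z_2$-conventions of the van Kampen obstruction must be handled with care --- is the main obstacle; the pseudomanifold observation and the Radon-partition facts are routine. (A more homotopy-flavoured alternative is to show that the deleted join $K^{*2}_{\Diagonal}$ is $2k$-connected, hence of $\Z_2$-index at least $2k+1$ and admitting no $\Z_2$-map to $S^{2k}$; but deducing $\obs{2k}(K)\neq 0$ from this requires the same kind of care, so I would favour the intersection-number route.)
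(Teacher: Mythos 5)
Your argument is correct, and it is worth noting that the paper itself does not prove Proposition~\ref{obs-nonzero-skeleton} at all: it simply cites the classical sources and points to a short proof in Melikhov's survey (Example~3.5 there). What you propose is essentially that classical van Kampen-style computation, made self-contained: the cells $\sigma\times\tau$ with $\sigma,\tau$ disjoint $k$-faces exhaust all top cells of $\delprod{\skelsim{k}{2k+2}}$, each $(2k-1)$-cell (a disjoint pair of a $k$-face and a $(k-1)$-face, leaving exactly two unused vertices) lies in exactly two top cells, so the sum of all $2k$-cells of the quotient is a $\Z_2$-cycle; the intersection-parity cocycle of a generic PL map represents $\obs{2k}$; and the moment-curve configuration gives exactly one crossing pair of disjoint $k$-faces per $(2k+2)$-subset of the $2k+3$ vertices, hence total intersection parity $2k+3\equiv 1\pmod 2$. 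All of these steps check out (including the claim that the unique Radon partition of $2k+2$ moment-curve points in $\R^{2k}$ is the alternating one, with both parts of size $k+1$). The one ingredient you take as known --- that the cohomology class of the intersection cocycle of a generic map coincides with $\obs{2k}(K)$ as defined in this paper via the classifying map $\overline{G}_K\colon \overline{K}\to \RP^\infty$ --- is a standard but nontrivial identification (it amounts to observing that the Gauss map of a generic $f$ is defined on the $(2k-1)$-skeleton of $\delprod{K}$ and that the obstruction cocycle to extending it over the top cells is exactly the intersection-parity cocycle); you flag this correctly, and it is precisely the kind of bookkeeping the cited reference supplies. So your route buys a self-contained elementary proof where the paper is content with a citation, at the cost of invoking (or reproving) that equivalence of definitions.
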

As a consequence, one obtains Proposition~\ref{prop:vK-F}, and in fact the slightly stronger statement that $\skelsim{k}{2k+2}$ does not admit an almost-embedding into $\R^{2k}$.

\subsection{Homological Almost-Embeddings and a Van Kampen--Flores Result}
For the proof of Theorem~\ref{t:main}, we wish to replace
homotopy-theoretic notions (like $k$-connectedness) by homological
assumptions (bounded Betti numbers). The simple but useful observation
that allows us to do this is that in the standard proof of
Proposition~\ref{prop:obstruction-almost-embeddings}, which is based
on (co)homological arguments, maps can be replaced by suitable chain
maps at every step.\footnote{This observation was already used in
  \cite{Wagner:MinorsRandomExpandingHypergraphs-2011} to study the
  (non-)embeddability of certain simplicial complexes. What we call a
  \emph{homological almost-embedding} in the present paper corresponds
  to the notion of a \emph{homological minor} used in
  \cite{Wagner:MinorsRandomExpandingHypergraphs-2011}.}  The
appropriate analogue of an almost-embedding is the following.
\begin{definition}\label{d:homrep}
Let $\Rspace$ be a (nonempty) topological space, $K$ be a simplicial complex, and consider
a chain map\footnote{We recall that a chain map $\gamma\colon C_\ast
\rightarrow D_\ast$ between chain complexes is simply a sequence of
homomorphisms $\gamma_n\colon C_n\rightarrow D_n$ that commute with the
respective boundary operators, $\gamma_{n-1}\circ
\partial_{C}=\partial_{D}\circ \gamma_n$.} $\gamma\colon C_\ast(K)\rightarrow
C_\ast(\Rspace)$ from the simplicial chains in $K$ to singular chains in
$\Rspace$.
\begin{enumerate}
\item[\textup{(i)}] The chain map $\gamma$ is called \emph{\nontrivial}\footnote{If we consider augmented chain complexes with chain groups also in dimension $-1$, 
  then being \nontrivial\ is equivalent to requiring that the generator of
$C_{-1}(K)\cong \Z_2$ (this generator corresponds to the empty simplex in $K$)
is mapped to the generator of $C_{-1}(\Rspace)\cong \Z_2$.} if the image of every vertex of $K$ is a finite set of points
  in~$\Rspace$ \textup{(}a 0-chain\textup{)} of \emph{odd} cardinality. 
  \item[(ii)] The chain map $\gamma$ is called a  \emph{homological
    almost-embedding} of a simplicial complex $K$ in $\Rspace$ if it is \nontrivial\ and if, additionally, the following holds: whenever $\sigma$ and $\tau$ are disjoint simplices of $K$, their image chains
  $\gamma(\sigma)$ and $\gamma(\tau)$ have disjoint supports, where
  the support of a chain is the union of (the images of) the singular simplices 
  with nonzero coefficient in that chain.
\end{enumerate}
\end{definition}

\begin{remark} Suppose that $f\colon|K|\rightarrow \R^d$ is a continuous map.
\begin{enumerate}
\item[(i)] The induced chain map\footnote{The induced chain map is defined as follows: We assume that we have fixed a total ordering of the vertices of $K$.
For a $p$-simplex $\sigma$ of $K$, the ordering of the vertices induces a homeomorphism $h_\sigma\colon |\Delta_p|\rightarrow |\sigma|\subseteq |K|$. The
image $f_\sharp(\sigma)$ is defined as the singular $p$-simplex $f\circ h_{\sigma}$.}
$f_\sharp\colon C_\ast(K)\rightarrow C_\ast(\R^d)$ is \nontrivial.
\item[(ii)] If $f$ is an almost-embedding then the induced chain map is a
  homological almost-embedding. 
\end{enumerate}
Moreover, note that without the requirement of being \nontrivial, we could simply take the constant zero chain map, for which the second requirement is trivially satisfied.
\end{remark}

We have the following analogue of
Proposition~\ref{prop:obstruction-almost-embeddings} for homological
almost-embeddings.
\begin{proposition}
\label{prop:obstruction-no-ae}
Suppose that $K$ is a finite simplicial complex with $\obs{d}(K)\neq 0$. Then
$K$ does not admit a homological almost-embedding in $\R^d$.
\end{proposition}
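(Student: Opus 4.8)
The plan is to mimic the classical cohomological proof that a nonzero Van Kampen obstruction $\obs{d}(K)\neq 0$ rules out an almost-embedding (Proposition~\ref{prop:obstruction-almost-embeddings}), but to carry it out entirely at the level of chain maps, checking that at no step do we actually need a map of spaces rather than a chain map. Recall that the obstruction $\obs{d}(K)$ lives in $H^d(\overline{K})$, where $\overline{K}$ is the quotient of the combinatorial deleted product $\widetilde{K}$ of $K$ (the subcomplex of $K\times K$ consisting of products $\sigma\times\tau$ of \emph{disjoint} simplices) by the free $\Z_2$-action that swaps the two factors. The key classical fact is that an almost-embedding $f\colon|K|\to\R^d$ induces an equivariant map $\widetilde{K}\to S^{d-1}$ (roughly, $(x,y)\mapsto (f(x)-f(y))/\|f(x)-f(y)\|$, using that $f$ separates disjoint simplices), and the existence of such an equivariant map forces $\obs{d}(K)=0$. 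So the task is to show that a \emph{homological} almost-embedding $\gamma\colon C_\ast(K)\to C_\ast(\R^d)$ already yields enough structure to force $\obs{d}(K)=0$.

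First I would recall (from Section~\ref{sec:obstructions}, which we are allowed to cite) the precise cochain-level description of $\obs{d}(K)$: one picks a generic PL or affine map $f_0\colon|K|\to\R^d$, and the obstruction cocycle assigns to each $d$-cell $\sigma\times\tau$ of $\overline{K}$ the mod-$2$ count of intersection points $f_0(\sigma)\cap f_0(\tau)$; its cohomology class is independent of $f_0$. The crucial point is that this construction is natural enough to be fed a chain map instead of an actual map. Concretely, given the homological almost-embedding $\gamma$, for each simplex $\sigma$ of $K$ the chain $\gamma(\sigma)$ is a singular $(\dim\sigma)$-chain in $\R^d$; the non-triviality condition pins down $\gamma$ on vertices (odd $0$-chains), and — after a standard perturbation/general-position argument on the finitely many singular simplices appearing in all the $\gamma(\sigma)$ — one can define, for disjoint simplices $\sigma,\tau$ with $\dim\sigma+\dim\tau=d$, an intersection number $\gamma(\sigma)\cdot\gamma(\tau)\in\Z_2$. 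The disjoint-supports hypothesis is exactly what guarantees that this intersection number is well-defined and that the resulting $\Z_2$-cochain $c_\gamma$ on $\overline{K}$ is well-behaved: contributions only come from genuinely distinct cells.

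Next I would verify that $c_\gamma$ is a cocycle representing $\obs{d}(K)$. The cocycle condition is a bookkeeping identity: $\delta c_\gamma$ evaluated on a $(d+1)$-cell $\sigma\times\tau$ of $\overline{K}$ is a sum of intersection numbers $\gamma(\partial\sigma)\cdot\gamma(\tau)+\gamma(\sigma)\cdot\gamma(\partial\tau)$, and this vanishes mod~$2$ because $\gamma$ is a \emph{chain} map, so $\gamma(\partial\sigma)=\partial\gamma(\sigma)$, combined with the fact that the algebraic intersection of a cycle's worth of material with a chain, modulo boundary terms, cancels in pairs — this is precisely the place where we use that $\gamma$ commutes with $\partial$, and it is the homological shadow of the geometric statement "the boundary of an interval of intersection points contributes an even count." To see that $[c_\gamma]=\obs{d}(K)$, one compares $\gamma$ with the chain map $(f_0)_\sharp$ induced by a generic affine map $f_0$: both are non-trivial chain maps from $C_\ast(K)$, hence chain homotopic after restricting appropriately, and a chain homotopy between them translates into a coboundary relating $c_\gamma$ and the classical obstruction cocycle of $f_0$. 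Once $[c_\gamma]=\obs{d}(K)\neq 0$, we have a contradiction, because the whole construction of $c_\gamma$ was predicated on the \emph{existence} of $\gamma$; more precisely, the disjoint-support condition forces $c_\gamma$ to be cohomologous to zero — or, in the cleaner formulation, a homological almost-embedding directly produces an equivariant chain map $C_\ast(\widetilde{K})\to C_\ast(S^{d-1})$ of the appropriate sort, and such a thing cannot exist when $\obs{d}(K)\neq 0$, by the very definition of the obstruction as living in a relative group measuring this.

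The main obstacle I anticipate is the well-definedness and genericity step: unlike an honest PL map, the chains $\gamma(\sigma)$ are arbitrary singular chains, so "intersection number of $\gamma(\sigma)$ and $\gamma(\tau)$" needs care. The right move is probably to first reduce to the case where all singular simplices occurring in the image are smooth (or PL) and in general position, using that singular homology can be computed with such chains and that $\gamma$ can be replaced by a chain-homotopic chain map with this property without disturbing non-triviality or the disjoint-support condition (disjointness of supports is an open condition, stable under small perturbation). A cleaner alternative, which sidesteps intersection numbers entirely, is to follow the cohomological argument abstractly: the disjoint-support hypothesis means $\gamma\otimes\gamma$ restricted to the deleted product lands in the chains of the \emph{configuration space} $\R^d\times\R^d\setminus\Delta$, which is $\Z_2$-equivariantly homotopy equivalent to $S^{d-1}$; passing to the quotient by the swap action gives a chain-level map realizing a class that must pair trivially against $\obs{d}(K)$, forcing $\obs{d}(K)=0$. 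I would present this second route as the main line and relegate the intersection-number picture to a remark. Either way, the heart of the matter — and the reason the statement is true — is that every use of $f$ in the classical proof factors through $f_\sharp$, so \nontrivial ity plus disjoint supports is all that the argument ever consumed.
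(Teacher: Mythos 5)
Your main line (the second route you sketch) is essentially the paper's argument in its first half: the disjoint-supports condition lets you tensor $\gamma$ with itself over the deleted product and push into the configuration space $(\R^d\times\R^d)\setminus\Delta$, which is $\Z_2$-equivariantly equivalent to $\s^{d-1}$, yielding a \nontrivial\ equivariant chain map $C_\ast(\delprod{K})\to C_\ast(\s^{d-1})$ (this is Lemma~\ref{lem:homological-Gauss-map}). One point you gloss over there is that $\gamma(\sigma)\otimes\gamma(\tau)$ is an element of a tensor product of chain complexes, not a singular chain on the product space; passing from one to the other requires the Eilenberg--Mac~Lane (shuffle) chain map, checked to be equivariant and compatible with the disjoint-support condition. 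That is routine but it is an actual step.

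The genuine gap is in how you finish. You rule out the resulting equivariant chain map ``by the very definition of the obstruction,'' but with the definition in use here ($\obs{d}(K)=\overline{G}_K^\ast(\xi^d)$, pulled back along the classifying map of the quotient $\overline{K}$ of the deleted product), the nonvanishing of $\obs{d}(K)$ a priori only obstructs equivariant \emph{maps of spaces} $\delprod{K}\to\s^{d-1}$; that it also obstructs \nontrivial\ equivariant \emph{chain} maps is precisely the chain-level strengthening the proposition needs, and it is not definitional. The missing ingredient is a chain-level analogue of the universality of $\s^\infty$: since the $\Z_2$-action on $\delprod{K}$ is free and $\s^\infty$ is acyclic, any two \nontrivial\ equivariant chain maps $C_\ast(\delprod{K})\to C_\ast(\s^\infty)$ are equivariantly chain homotopic (built cell by cell on chosen representatives of the $\Z_2$-orbits). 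Hence the quotient of your hypothetical chain map induces the \emph{same} homomorphism $H^\ast(\RP^\infty)\to H^\ast(\overline{K})$ as $\overline{G}_K$, so $\obs{d}(K)=\overline{\psi}^\ast\bigl(\overline{i}^\ast(\xi^d)\bigr)$ with $\overline{i}^\ast(\xi^d)\in H^d(\RP^{d-1})=0$, a contradiction. This is the content of Lemma~\ref{lem:chain-homotopy} feeding into Lemma~\ref{lem:obs-no-map-or-chain-map}(b); your phrase that the induced class ``must pair trivially against $\obs{d}(K)$'' does not by itself force the obstruction to vanish. Your first route via intersection cocycles hides the same comparison step (identifying $[c_\gamma]$ with $\obs{d}(K)$ for an arbitrary \nontrivial\ chain map) on top of the general-position issues you already flag, so it does not close this gap either.
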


As a corollary, we get the following result, which underlies our proof of Theorem~\ref{t:main}.
\begin{corollary}\label{c:nohomrep}
  For any $k \ge 0$, the $k$-skeleton $\skelsim{k}{2k+2}$ of the
  $(2k+2)$-dimensional simplex has no homological almost-embedding in
  $\R^{2k}$.
\end{corollary}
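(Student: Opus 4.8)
The plan is to deduce Corollary~\ref{c:nohomrep} immediately by combining the two facts already recorded in the excerpt. The key point is that the non-vanishing of the $\Z_2$ Van Kampen obstruction obstructs not just embeddings, nor even almost-embeddings, but also homological almost-embeddings in the sense of Definition~\ref{d:homrep}; this is precisely the content of Proposition~\ref{prop:obstruction-no-ae}.

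Concretely, fix $k\geq 0$ and set $K=\skelsim{k}{2k+2}$, the $k$-skeleton of the $(2k+2)$-dimensional simplex. By Proposition~\ref{obs-nonzero-skeleton}, we have $\obs{2k}(K)\neq 0$, i.e., the $\Z_2$ Van Kampen obstruction to embeddability into $\R^{2k}$ does not vanish. Since $K$ is a finite simplicial complex, Proposition~\ref{prop:obstruction-no-ae} applies with $d=2k$ and yields that $K$ does not admit a homological almost-embedding in $\R^{2k}$. This is exactly the claimed statement.

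There is essentially no obstacle at this level: all the work has been pushed into Proposition~\ref{prop:obstruction-no-ae} (the homological strengthening of Proposition~\ref{prop:obstruction-almost-embeddings}), whose proof requires the observation that every step in the standard cohomological argument for the classical Van Kampen obstruction goes through when continuous maps are replaced by chain maps, and into Proposition~\ref{obs-nonzero-skeleton} (the classical computation that the obstruction of $\skelsim{k}{2k+2}$ is nonzero). Given those two inputs, the corollary is a one-line specialization. The only thing to double-check when writing it out is that the hypotheses of Proposition~\ref{prop:obstruction-no-ae} — finiteness of $K$ and $\obs{d}(K)\neq 0$ with $d=2k$ — are literally the conclusions of Proposition~\ref{obs-nonzero-skeleton}, which they are.
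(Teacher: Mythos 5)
Your proposal is correct and matches the paper's intended argument exactly: the corollary is stated immediately after Proposition~\ref{prop:obstruction-no-ae} precisely as the specialization $K=\skelsim{k}{2k+2}$, $d=2k$, using Proposition~\ref{obs-nonzero-skeleton} to supply $\obs{2k}(K)\neq 0$. Nothing further is needed.
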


We conclude this subsection by two facts that are not
  needed for the proof of the main result but are useful for the
  presentation of our method in Section~\ref{s:history}.

  If the ambient dimension $d = 2k + 1$ is odd, we can immediately see
  that $\skelsim{k+1}{2k+4}$ has no homological almost-embedding in
  $\R^{2k+1}$ since it has no homological almost-embedding in
  $\R^{2k+2}$; this result can be slightly improved:

\begin{corollary}\label{c:nohomrep-odd}
  For any $d \ge 0$, the $\lceil d/2\rceil$-skeleton $\skelsim{\lceil
    d/2\rceil}{d+2}$ of the $(d+2)$-dimensional simplex has no
  homological almost-embedding in $\R^d$.
\end{corollary}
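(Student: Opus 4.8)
The plan is to split on the parity of $d$. If $d=2k$ is even then $\lceil d/2\rceil=k$ and the statement is verbatim Corollary~\ref{c:nohomrep}, so nothing needs to be done; all the content is in the odd case. So let $d=2k+1$, where $\lceil d/2\rceil=k+1$, and suppose toward a contradiction that $\gamma\colon C_\ast(K)\to C_\ast(\R^{2k+1})$ is a homological almost-embedding of $K:=\skelsim{k+1}{2k+3}$. I would upgrade $\gamma$ into a homological almost-embedding of $K':=\skelsim{k+1}{2k+4}$ in $\R^{2k+2}$, which is impossible by Corollary~\ref{c:nohomrep} applied with $k+1$ in place of $k$. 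The key structural observation is that $K$ is exactly the induced subcomplex of $K'$ on all of its vertices but one, say $u$, and that every face of $K'$ containing $u$ has the form $\{u\}\cup\sigma$ for some face $\sigma$ of $K$ of dimension at most $k$.

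To build the extension, I would view $\R^{2k+1}$ as a hyperplane $H\subseteq\R^{2k+2}$, fix a point $p\in\R^{2k+2}\setminus H$, and set $\gamma'(\sigma)=\gamma(\sigma)$ for faces $\sigma$ of $K$, let $\gamma'(\{u\})$ be the $0$-chain given by the single point $p$, and put $\gamma'(\{u\}\cup\sigma)=\operatorname{cone}_p(\gamma(\sigma))$, where $\operatorname{cone}_p$ is the standard chain-level cone operator on singular chains in the convex set $\R^{2k+2}$ with apex $p$; over $\Z_2$ it satisfies $\partial\operatorname{cone}_p(c)=c+\operatorname{cone}_p(\partial c)+\eps(c)\,p$ with $\eps$ the augmentation. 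Extending by linearity, $\gamma'$ should be a chain map: on faces not containing $u$ this is inherited from $\gamma$, and for $\{u\}\cup\sigma$ one expands $\partial\gamma'(\{u\}\cup\sigma)=\partial\operatorname{cone}_p(\gamma\sigma)$ using the cone identity and the chain-map property of $\gamma$; crucially, the augmentation term $\eps(\gamma\sigma)\,p$ vanishes when $\dim\sigma\ge1$ and equals $p$ when $\sigma$ is a vertex \emph{because $\gamma$ is \nontrivial} (so $\gamma\sigma$ has odd cardinality), which is exactly the term needed to match $\gamma'(\partial(\{u\}\cup\sigma))$.

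It then remains to check that $\gamma'$ is a homological almost-embedding. It is \nontrivial\ (vertices of $K$ retain their odd $0$-chain images and $\gamma'(\{u\})$ is a single point). For disjoint faces $\sigma,\tau$ of $K'$: if neither contains $u$, their images have disjoint supports because those of $\gamma$ do; otherwise write $\tau=\{u\}\cup\tau'$ with $\tau'$ a face of $K$ disjoint from $\sigma\in K$, and observe that $\operatorname{supp}\gamma'(\tau)$ is contained in the join of $\operatorname{supp}\gamma(\tau')$ with $p$, which meets $H$ only inside $\operatorname{supp}\gamma(\tau')$, whereas $\operatorname{supp}\gamma'(\sigma)=\operatorname{supp}\gamma(\sigma)\subseteq H$; since $\operatorname{supp}\gamma(\sigma)$ and $\operatorname{supp}\gamma(\tau')$ are disjoint, so are $\operatorname{supp}\gamma'(\sigma)$ and $\operatorname{supp}\gamma'(\tau)$. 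This contradicts Corollary~\ref{c:nohomrep} and finishes the odd case.

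The argument is short; the only point needing care is the bookkeeping in the last two paragraphs, and I would flag in particular that it is precisely the requirement that $\gamma$ be \nontrivial\ that turns the coned extension into a chain map, so that hypothesis genuinely enters. (One could alternatively argue cohomologically, by showing $\obs{2k+1}(\skelsim{k+1}{2k+3})\neq 0$ and quoting Proposition~\ref{prop:obstruction-no-ae} via a deleted-join computation, but the extension argument above is more self-contained since it only uses Corollary~\ref{c:nohomrep}.)
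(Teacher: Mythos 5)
Your argument is correct, but it takes a genuinely different route from the paper's. You handle the odd case $d=2k+1$ geometrically: assuming a homological almost-embedding $\gamma$ of $\skelsim{k+1}{2k+3}$ in $\R^{2k+1}=H\subseteq\R^{2k+2}$, you cone it off a point $p\notin H$ to obtain a homological almost-embedding of $\skelsim{k+1}{2k+4}$ in $\R^{2k+2}$ (using that every face of $\skelsim{k+1}{2k+4}$ through the extra vertex $u$ is $\{u\}\cup\sigma$ with $\dim\sigma\le k$), and then invoke Corollary~\ref{c:nohomrep} one level up, at $k+1$. The paper instead stays inside the obstruction framework of Section~\ref{sec:obstructions}: it uses the fact that $\obs{d}(K)\neq 0$ implies $\obs{d+1}(CK)\neq 0$ for the cone $CK$ \cite{BestvinaKapovichKleiner:vanKampenDiscreteGroups-2002}, applies this to $\skelsim{k}{2k+2}$ (so it only needs $\obs{2k}(\skelsim{k}{2k+2})\neq 0$, one level down), passes to $\skelsim{k+1}{2k+3}$ by monotonicity of the obstruction under subcomplexes, and concludes with Proposition~\ref{prop:obstruction-no-ae}. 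Your construction is, in effect, the contrapositive chain-level shadow of the same coning idea, performed on the target side rather than on the obstruction. What your route buys is self-containedness: it needs only the singular cone operator with its $\Z_2$ identity $\partial\operatorname{cone}_p = \mathrm{id} + \operatorname{cone}_p\circ\partial + \eps(\cdot)\,p$, Corollary~\ref{c:nohomrep}, and elementary support bookkeeping (the join of $\operatorname{supp}\gamma(\tau')$ with $p$ meets $H$ only inside $\operatorname{supp}\gamma(\tau')$, since $p\notin H$); you also correctly pinpoint that nontriviality of $\gamma$ is exactly what makes the coned extension a chain map over the edges $\{u,v\}$, where the term $\eps(\gamma(v))\,p=p$ must appear. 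What the paper's route buys is the stronger intermediate statement $\obs{2k+1}(\skelsim{k+1}{2k+3})\neq 0$, which is a reusable cohomological fact (any consequence of a nonzero van Kampen obstruction follows), whereas your argument yields only the non-existence of homological almost-embeddings — which is all that is needed here.
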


\begin{proof}
  The statement for even $d$ is already covered by the case $k = d/2$
  of Corollary~\ref{c:nohomrep}, so assume that $d$ is odd and write
  $d=2k + 1$. If $K$ is a finite simplicial complex with
  $\obs{d}(K)\neq 0$ and if $CK$ is the cone over $K$ then
  $\obs{d+1}(CK)\neq 0$ (for a proof, see, for instance,
  \cite[Lemma~8]{BestvinaKapovichKleiner:vanKampenDiscreteGroups-2002}).
  Since we know that $\obs {2k}(\skelsim{k}{2k+2}) \neq 0$ it follows
  that $\obs{2k+1}(C\skelsim{k}{2k+2}) \neq 0$.
%
%
  Consequently, $\obs{2k+1}(\skelsim{k+1}{2k+3}) \neq 0$ since
  $C\skelsim{k}{2k+2}$ is a subcomplex of $\skelsim{k+1}{2k+3}$ and
  there exists an equivariant map from the deleted product of the
  subcomplex to the deleted product of the complex.
  Proposition~\ref{prop:obstruction-no-ae} then implies that
  $\skelsim{k+1}{2k+3}$ admits no homological almost-embedding in
  $\R^{2k+1}$.
\end{proof}

The next fact is the following analogue of Radon's lemma, proved in the next
subsection along the proof of
Proposition~\ref{prop:obstruction-no-ae}.

\begin{lemma}[Homological Radon's lemma]
\label{l:HomRadon}
For any $d\geq 0$, $\obs{d}(\partial \simplex{d+1})\neq 0$. 
Consequently, the boundary of $(d+1)$-simplex $\partial \simplex{d+1}$ admits
no homological almost-embedding in $\R^d$.
\end{lemma}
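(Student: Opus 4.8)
The plan is to compute the combinatorial deleted product of $K:=\partial\simplex{d+1}$ by hand, recognize it as the $d$-sphere with the antipodal $\Z_2$-action, deduce that $\obs{d}(K)\neq 0$, and then invoke Proposition~\ref{prop:obstruction-no-ae} for the statement about homological almost-embeddings. (Since the excerpt defers the detailed construction of $\obs{d}$ to Section~\ref{sec:obstructions}, it is natural to run this argument there, alongside the proof of Proposition~\ref{prop:obstruction-no-ae}, rather than quoting it as a black box.)

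First I would describe the geometric realization $|\delprod{K}|$. Using barycentric coordinates $x=(x_0,\dots,x_{d+1})$, $x_i\ge 0$, $\sum_i x_i=1$ on $|\simplex{d+1}|$, disjointness of supports is the condition $x_iy_i=0$ for all $i$; this already forces both $x$ and $y$ onto $|\partial\simplex{d+1}|$, since $x$ interior to $|\simplex{d+1}|$ would make all $y_i=0$. Hence
\[
  |\delprod{K}| \;=\; \bigl\{(x,y)\in |\simplex{d+1}|\times |\simplex{d+1}|:\ x_iy_i=0\ \text{for all}\ i\bigr\}.
\]
Then I would check that $(x,y)\mapsto z:=x-y$ is a homeomorphism of $|\delprod{K}|$ onto $P:=\{z\in\R^{d+2}:\ \sum_i z_i=0,\ \sum_i|z_i|=2\}$, with inverse $z\mapsto(x,y)$, $x_i=\max(z_i,0)$, $y_i=\max(-z_i,0)$: the conditions $\sum_i z_i=0$ and $\sum_i|z_i|=2$ correspond exactly to $\sum_i x_i=\sum_i y_i=1$, while $x_iy_i=0$ holds automatically. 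The set $P$ is the boundary of the $(d+1)$-dimensional, centrally symmetric polytope $\{z:\ \sum_i z_i=0,\ \sum_i|z_i|\le 2\}$, hence homeomorphic to $S^d$, and the free $\Z_2$-action swapping the two factors of $|\delprod{K}|$ corresponds under $z=x-y$ to $z\mapsto -z$. Thus $|\delprod{K}|$ is $\Z_2$-homeomorphic to $S^d$ with the antipodal action, and $\overline{K}\cong\RP^d$.

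From here I would conclude $\obs{d}(K)\neq 0$ in the same way one proves Proposition~\ref{obs-nonzero-skeleton}: the deleted product admits a $\Z_2$-equivariant map from $S^d$ (here even a homeomorphism), so any $\Z_2$-equivariant (chain) map from $|\delprod{K}|$ to $S^{d-1}$ would compose to a $\Z_2$-map $S^d\to S^{d-1}$, which is impossible by the Borsuk--Ulam theorem (for $d=0$ one has $S^{-1}=\emptyset$, so there is nothing to check); by the construction of the Van Kampen obstruction, and because $|\delprod{K}|$ has dimension $d$ so that $\obs{d}(K)$ is the only obstruction, the non-existence of such an equivariant map forces $\obs{d}(K)\neq 0$. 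Proposition~\ref{prop:obstruction-no-ae} then gives that $\partial\simplex{d+1}$ has no homological almost-embedding in $\R^d$. I expect the only non-formal ingredient to be the identification of $|\delprod{K}|$ above, which is short; the step that needs the most care is making the last paragraph precise against whatever (co)chain-level description of $\obs{d}$ is fixed in Section~\ref{sec:obstructions} — i.e., verifying that a $\Z_2$-chain map $C_*(S^d)\to C_*(|\delprod{K}|)$ (here the identity) together with the chain-level Borsuk--Ulam statement is exactly what makes the obstruction nonzero.
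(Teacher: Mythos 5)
Your explicit identification of $|\delprod{\partial\simplex{d+1}}|$ with the boundary of a centrally symmetric $(d+1)$-polytope, equivariantly with respect to swap/antipodality, is correct and is exactly the first step of the paper's proof (the paper simply cites Matou\v{s}ek's Exercise~5.4.3 for it, so you are being more self-contained there). The second half, invoking Proposition~\ref{prop:obstruction-no-ae}, is also as in the paper.

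The gap is in how you get $\obs{d}(\partial\simplex{d+1})\neq 0$ from this identification. You argue: Borsuk--Ulam rules out an equivariant map $\delprod{K}\to\s^{d-1}$, and since $\dim\delprod{K}=d$, ``$\obs{d}(K)$ is the only obstruction,'' so it must be nonzero. That is the \emph{converse} of Lemma~\ref{lem:obs-no-map-or-chain-map}(a), which the paper never states or proves, and as a general principle it is not sound for the $\Z_2$-coefficient class: the complete primary obstruction to an equivariant map from a free $d$-dimensional $\Z_2$-complex $X$ into $\s^{d-1}$ lives in $H^d(X/\Z_2)$ with (possibly twisted) \emph{integer} coefficients $\pi_{d-1}(\s^{d-1})\cong\Z$, and $\obs{d}$ is only its mod-$2$ reduction; non-existence of the map forces the integral class to be nonzero, but a nonzero integral class can reduce to zero mod $2$, so ``no equivariant map $\Rightarrow\obs{d}\neq0$'' needs an argument you do not supply. (You also import Borsuk--Ulam as an external input, whereas in the paper's framework Borsuk--Ulam is itself \emph{derived} from the cohomological fact you actually need, so the logical order is inverted.) The repair is immediate from your own computation and is what the paper does: since $\delprod{K}$ is $\Z_2$-homeomorphic to $\s^d$, one may take $G_K$ to be the inclusion $\s^d\hookrightarrow\s^\infty$ (Proposition~\ref{prop:sinfinity-maps} makes $\obs{d}$ independent of this choice), so $\overline{G}_K$ is the inclusion $\overline{K}\cong\RP^d\hookrightarrow\RP^\infty$ and $\obs{d}(K)=\overline{i}^\ast(\xi^d)$, which is nonzero because $H^\ast(\RP^d;\Z_2)\cong\Z_2[\xi]/(\xi^{d+1})$ and $\overline{i}^\ast$ is the quotient map --- the ``preceding remark'' in Section~\ref{sec:obstructions}. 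With that one step replaced, your proof coincides with the paper's.
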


\subsection{Deleted Products and Obstructions}
\label{sec:obstructions}
Here, we review the standard proof of
Proposition~\ref{prop:obstruction-almost-embeddings} and explain how to adapt
it to prove Proposition~\ref{prop:obstruction-no-ae}, which will
follow from Lemma~\ref{lem:homological-Gauss-map} and
Lemma~\ref{lem:obs-no-map-or-chain-map}~(b) below. The reader unfamiliar with
cohomology and willing to accept
Proposition~\ref{prop:obstruction-no-ae} can safely proceed to
Section~\ref{s:history}.

\paragraph{$\Z_2$-spaces and equivariant maps.} We begin by recalling some basic notions of equivariant topology: An \emph{action} of the group $\Z_2$ on a space $X$ is given by an automorphism $\nu\colon X\rightarrow X$ such that $\nu\circ \nu=1_X$; the action is \emph{free} if $\nu$ does not have any fixed points. If $X$ is a simplicial complex (or a cell complex), then the action is called simplicial (or cellular) if it is given by a simplicial (or cellular) map. A space with a given (free) $\Z_2$-action is also called a (free) $\Z_2$-space. 

A map $f\colon X \rightarrow Y$ between $\Z_2$-spaces $(X,\nu)$ and $(Y,\mu)$ is called \emph{equivariant} if it commutes with the respective $\Z_2$-actions, i.e., $f\circ \nu=\mu\circ f$. Two equivariant maps $f_0,f_1\colon X\rightarrow Y$ are \emph{equivariantly homotopic} if there exists a homotopy
$F\colon X\times [0,1]\to Y$ such that all intermediate maps $f_t:=F(\cdot,t)$, $0\leq t\leq 1$, are equivariant.

A $\Z_2$-action $\nu$ on a space $X$ also yields a $\Z_2$-action on the chain complex $C_\ast(X)$, given by the induced chain map $\nu_\sharp\colon C_\ast(X)\rightarrow C_\ast(X)$ (if $\nu$ is simplicial or cellular, respectively, then this remains true if we consider the simplicial or cellular chain complex of $X$ instead of the singular chain complex), and if  
$f\colon X\to Y$ is an equivariant map between $\Z_2$-spaces then the induced chain map is also equivariant (i.e., it commutes with the $\Z_2$-actions on the chain complexes).

\paragraph{Spheres.} Important examples of free $\Z_2$-spaces are the standard spheres $\s^d$, $d\geq 0$, with the action given by antipodality, $x\mapsto -x$.
There are natural inclusion maps $\s^{d-1}\hookrightarrow \s^d$, which are
equivariant. Antipodality also gives a free $\Z_2$-action on the union
$\s^\infty=\bigcup_{d\geq 0} \s^d$, the infinite-dimensional sphere. Moreover,
one can show that $\s^\infty$ is contractible, and from this it is not hard to
deduce that $\s^\infty$ is a universal $\Z_2$-space, in the following sense
(see~\cite{milnor56} or also \cite[Prop.~8.16 and
Thm.~8.17]{Kozlov:CombinatorialAlgebraicTopology-2008} for a more detailed textbook treatment). 
\begin{proposition}
\label{prop:sinfinity-maps}
If $X$ is any cell complex with a free cellular $\Z_2$-action, then there exists an equivariant map $f\colon X\to \s^\infty$. 
Moreover, any two equivariant maps $f_0,f_1\colon X\to \s^\infty$ are equivariantly homotopic.
\end{proposition}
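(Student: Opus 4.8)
The plan is to establish both parts by the classical obstruction-theoretic induction over skeleta; the only topological input is that $\pi_m(\s^\infty)=0$ for every $m\geq 0$. This fact is elementary (and equivalent to the contractibility of $\s^\infty$ quoted above): any continuous map from a compact space into $\s^\infty=\bigcup_N\s^N$ has image in some finite stage $\s^N$, since $\s^\infty$ carries the weak topology of this filtration, and $\pi_m(\s^N)=0$ whenever $N>m$; hence every map $S^m\to\s^\infty$ lands in some $\s^N$ with $N>m$, is null-homotopic there, and therefore extends over $D^{m+1}$. I will also use that, since the $\Z_2$-action on $X$ is free, it permutes the cells of $X$ with all orbits $\{e,\nu(e)\}$ of size exactly two (one may assume this after passing to a subdivision if necessary); fix once and for all a choice of one representative cell $e$ in each orbit.

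\emph{Existence.} I would construct $f$ by induction on $n$, maintaining equivariance on $X^{(n)}$. On $X^{(0)}$, send each chosen vertex to a fixed point $p\in\s^0\subset\s^\infty$ and its partner $\nu(v)$ to $-p=\mu(p)$, where $\mu$ is the antipodal action on $\s^\infty$; this is equivariant. Assume $f$ is defined and equivariant on $X^{(n-1)}$. For each chosen $n$-cell $e$ with attaching map $\varphi_e\colon S^{n-1}\to X^{(n-1)}$, the composite $f\circ\varphi_e\colon S^{n-1}\to\s^\infty$ is null-homotopic (as $\pi_{n-1}(\s^\infty)=0$) and thus extends to a map $\Phi_e\colon D^n\to\s^\infty$; use $\Phi_e$ to define $f$ on the closed cell $\bar e$, and set $f|_{\,\overline{\nu(e)}}:=\mu\circ f|_{\bar e}\circ\nu^{-1}$ on the partner cell. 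Because $e\neq\nu(e)$, the two cells of an orbit are extended independently, and because $f$ was already equivariant on $X^{(n-1)}$, both prescriptions agree with the existing $f$ along the relevant attaching maps; hence the resulting map on $X^{(n)}$ is well defined and equivariant. Passing to the colimit over $n$ yields an equivariant map $f\colon X\to\s^\infty$, continuous by the weak topology on $X$.

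\emph{Uniqueness up to equivariant homotopy.} I would apply exactly the same scheme relatively. Give $[0,1]$ the CW structure with $0$-cells $\{0\},\{1\}$ and one $1$-cell, and equip $X\times[0,1]$ with the product action $\nu\times\mathrm{id}$, which is again free and cellular, with $X\times\{0,1\}$ an invariant subcomplex. Given equivariant maps $f_0,f_1\colon X\to\s^\infty$, the map equal to $f_0$ on $X\times\{0\}$ and to $f_1$ on $X\times\{1\}$ is equivariant on $X\times\{0,1\}$. The relative cells of the pair $\bigl(X\times[0,1],\,X\times\{0,1\}\bigr)$ are the products $e\times[0,1]$ with $e$ a cell of $X$; they again come in $\Z_2$-orbits of size two, and the obstruction to extending an already-constructed equivariant map over such an $(n{+}1)$-cell lies in $\pi_n(\s^\infty)=0$. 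Running the induction over $n$ as before produces an equivariant extension $F\colon X\times[0,1]\to\s^\infty$, which is the required equivariant homotopy from $f_0$ to $f_1$.

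The step that requires the most care, in both arguments, is the compatibility of the equivariant extension at each stage: one must verify that prescribing $f$ freely on the chosen cell of an orbit and then propagating it by the $\Z_2$-action does not conflict with the equivariant map already built on the lower skeleton. Freeness of the action is exactly what makes this work — it guarantees $e\neq\nu(e)$, so the two halves of an orbit are genuinely distinct cells extended independently, while equivariance of $f$ on the previous skeleton forces the boundary identifications to match up automatically. Everything else is the standard Whitehead-type cell-by-cell extension, with $\s^\infty$ furnishing the vanishing homotopy groups that kill every obstruction.
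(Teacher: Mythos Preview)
The paper does not actually prove this proposition; it is stated as a known fact with a reference to Kozlov's textbook \cite[Prop.~8.16 and Thm.~8.17]{Kozlov:CombinatorialAlgebraicTopology-2008}. Your proof is correct and is precisely the standard obstruction-theoretic argument one finds there: induct over skeleta, extend over one cell in each free $\Z_2$-orbit using $\pi_m(\s^\infty)=0$, and propagate to the partner cell by equivariance; for uniqueness, do the same relatively on $X\times[0,1]$.

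One small remark: your parenthetical ``one may assume this after passing to a subdivision if necessary'' is in fact unnecessary. If a cellular involution $\nu$ on $X$ fixes an open $n$-cell $e$ setwise, then via the characteristic map $\nu|_{\bar e}$ induces a continuous involution of $D^n$, which has a fixed point by Brouwer; hence a \emph{free} cellular $\Z_2$-action automatically permutes the open cells in orbits of size two. But this is a minor point and your hedge is harmless.

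It is also worth noting that the paper \emph{does} spell out the chain-level analogue of the uniqueness part (Lemma~\ref{lem:chain-homotopy}), building an equivariant chain homotopy by the same skeleton-by-skeleton induction with $\tilde H_*(\s^\infty)=0$ replacing $\pi_*(\s^\infty)=0$, and even remarks in a footnote that this ``just mimics the argument for the existence of an equivariant homotopy.'' So your approach is exactly in the spirit the authors had in mind.
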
  
Any equivariant map $f\colon X\to \s^\infty$ induces a \nontrivial\ equivariant chain map $f_\sharp\colon C_\ast(X)\to C_\ast(\s^\infty)$. A simple
fact that will be crucial in what follows is that Proposition~\ref{prop:sinfinity-maps} has an analogue on the level of chain maps.

We first recall the relevant notion of homotopy between chain maps: Let $C_\ast(X)$ and $C_\ast(Y)$ be (singular or simplicial, say) chain complexes,
and let  
$\varphi,\psi\colon C_\ast(X)\to C_\ast(Y)$ be chain maps. A \emph{chain homotopy} $\eta$ between $\varphi$ and $\psi$ is a family of homomorphisms $\eta_j\colon C_j(X)\rightarrow C_{j+1}(Y)$ such that
$$\varphi_j-\psi_j =\partial_{j+1}^Y\circ \eta_j+\eta_{j-1}\circ \partial_j^X$$
for all $j$.\footnote{Here, we use subscripts and superscripts on the boundary operators to emphasize which dimension and which chain complex they belong to; often, these indices are dropped and one simply writes $\varphi-\psi=\partial \eta+\eta\partial$.} If $X$ and $Y$ are $\Z_2$-spaces then a chain homotopy is called equivariant if it commutes with the (chain maps induced by) the $\Z_2$-actions.\footnote{We also recall that if $f,g\:X\to Y$ are (equivariantly) homotopic then the induced chain maps are (equivariantly) chain homotopic. Moreover, chain homotopic maps induce \emph{identical} maps in homology and cohomology.}

\begin{lemma}
\label{lem:chain-homotopy}
If $X$ is a cell complex with a free cellular $\Z_2$-action then any two \nontrivial\ equivariant chain maps $\varphi,\psi\colon C_\ast(X)\rightarrow C_\ast(\s^\infty)$
are equivariantly chain homotopic.\footnote{We stress that we work with the cellular chain complex for $X$. }
\end{lemma}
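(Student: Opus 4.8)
The plan is to mimic the standard ``acyclic models'' / obstruction-theoretic argument that proves the uniqueness-up-to-homotopy part of Proposition~\ref{prop:sinfinity-maps}, but carried out entirely on the level of cellular chain complexes. The key structural fact to exploit is that $C_\ast(\s^\infty)$, equipped with the $\Z_2$-action induced by antipodality, is a \emph{free resolution of $\Z_2$ over the group ring $\Z_2[\Z_2]$}: each chain group $C_n(\s^\infty)$ is a free $\Z_2[\Z_2]$-module (the standard CW structure on $\s^\infty$ has two $n$-cells in each dimension, swapped by the action), and the augmented complex is exact because $\s^\infty$ is contractible, hence in particular acyclic. On the source side, because the $\Z_2$-action on the cell complex $X$ is free and cellular, the cellular chain complex $C_\ast(X)$ is likewise a complex of free $\Z_2[\Z_2]$-modules (a $\Z_2[\Z_2]$-basis is obtained by choosing one cell from each free orbit). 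Equivariant chain maps and equivariant chain homotopies $C_\ast(X)\to C_\ast(\s^\infty)$ are then precisely $\Z_2[\Z_2]$-linear chain maps and chain homotopies.

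First I would record that any \nontrivial\ equivariant chain map $\varphi$ induces the identity $\Z_2\to\Z_2$ on $H_{-1}$ of the augmented complexes (this is exactly the content of the \nontrivial\ condition, by the footnote to Definition~\ref{d:homrep}(i)); hence for two \nontrivial\ equivariant chain maps $\varphi,\psi$, the equivariant chain map $\varphi-\psi$ induces the zero map on augmented homology in all degrees (it is zero in degree $-1$ and there is nothing below). Then I would construct the equivariant chain homotopy $\eta_j\colon C_j(X)\to C_{j+1}(\s^\infty)$ by induction on $j$, starting from $\eta_{-2}=0$ (or $\eta_{-1}=0$ in the unaugmented picture, after handling degree $-1$ separately). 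At stage $j$, it suffices to define $\eta_j$ on a $\Z_2[\Z_2]$-basis of $C_j(X)$, i.e.\ on one representative cell $e$ per free orbit, and then extend $\Z_2[\Z_2]$-linearly; this automatically makes $\eta_j$ equivariant. The element $\varphi_j(e)-\psi_j(e)-\eta_{j-1}(\partial e)\in C_j(\s^\infty)$ is a cycle, by the inductive chain-homotopy identity in degree $j-1$ together with $\partial\varphi=\varphi\partial$, $\partial\psi=\psi\partial$; since $\widetilde H_j(\s^\infty)=0$ it is a boundary, so we may pick $\eta_j(e)\in C_{j+1}(\s^\infty)$ with $\partial\eta_j(e)=\varphi_j(e)-\psi_j(e)-\eta_{j-1}(\partial e)$. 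Extending over the orbit of $e$ by the action and repeating over all orbits and all $j$ produces the desired equivariant chain homotopy.

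The one point that needs a little care — and is the main (mild) obstacle — is the bookkeeping at the bottom of the complex, i.e.\ making the induction start correctly so that the \nontrivial\ hypothesis is actually used and degree $0$ (and the fictitious degree $-1$) are handled consistently; this is where one sees that the constant zero chain map is excluded and why $\varphi_0-\psi_0$ lands in the kernel of the augmentation, so that it is a boundary in $C_0(\s^\infty)$. Everything else is the routine free-resolution argument: freeness of $C_\ast(X)$ over $\Z_2[\Z_2]$ (from freeness of the action) lets us define maps orbit-by-orbit, and acyclicity of $C_\ast(\s^\infty)$ (from contractibility of $\s^\infty$) supplies the chains bounding the obstruction cycles at each step. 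No finiteness or finite-dimensionality of $X$ is needed since the construction is dimension-by-dimension.
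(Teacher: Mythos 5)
Your proposal is correct and follows essentially the same argument as the paper: an induction on dimension that defines the homotopy on one representative cell per free orbit, starts at the augmentation level where nontriviality forces $\varphi_{-1}=\psi_{-1}$, uses acyclicity of $\s^\infty$ to bound the cycle $\varphi_j(e)-\psi_j(e)-\eta_{j-1}(\partial e)$, and extends equivariantly over the orbit. The $\Z_2[\Z_2]$-free-resolution packaging is just a cleaner way of phrasing the same orbit-by-orbit construction the paper carries out explicitly.
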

\begin{proof}[Proof of Lemma~\ref{lem:chain-homotopy}] 
Let the $\Z_2$-action on $X$ be given by the automorphism $\nu\colon X\to X$.
For each dimension $i\geq 0$, the action partitions the $i$-dimensional
cells of $X$ (the basis elements of $C_i(X)$) into pairs $\sigma, \nu(\sigma)$.
For each such pair, we arbitrarily pick one of the cells and call it the
representative of the pair.

We define the desired equivariant chain homotopy $\eta$ between $\varphi$ and $\psi$ by induction on the dimension, using the fact that all reduced homology groups of $\s^\infty$ are zero. 
(This just mimics the argument for the existence of an equivariant
homotopy, which uses the contractibility of $\s^\infty$.)

We start the induction in dimension at $j=-1$ (and for convenience, we also use the convention that all chain groups, chain maps, and $\eta_i$ are understood to be zero in dimensions $i<-1$). Since we assume that both $\varphi$ and $\psi$ are \nontrivial, we have that $\varphi_{-1},\psi_{-1}\colon C_{-1}(X)\to C_{-1}(\s^\infty)$ are identical, and we set $\eta_{-1}\colon C_{-1}(X)\rightarrow C_0(\s^\infty)$ to be zero. 

Next, assume inductively that equivariant homomorphisms $\eta_{i}\colon C_{i}(X)\rightarrow C_i(\s^\infty)$ have already been defined for $i<j$ and satisfy
\begin{equation}
\label{eq:chain-homotopy}
\varphi_{i}-\psi_{i}=\eta_{i-1}\circ \partial + \partial \circ \eta_i
\end{equation}
for all $i<j$ (note that initially, this holds true for $j=0$).

Suppose that $\sigma$ is a $j$-dimensional cell of $X$ representing a pair $\sigma,\nu(\sigma)$. Then $\partial \sigma \in C_{j-1}(X)$, and so $\eta_{j-1}(\partial \sigma) \in C_j(\s^\infty)$ is already defined. We are looking for a suitable chain $c\in C_{j+1}(\s^\infty)$ which we can take to be $\eta_j(\sigma)$ in order to satisfy the chain homotopy relation (\ref{eq:chain-homotopy}) also for $i=j$, such a chain $c$ has to satisfy $\partial c=b$, where
$$b:=\varphi_j(\sigma)-\psi_j(\sigma)-\eta_{j-1}(\partial(\sigma)).$$
To see that we can find such a $c$, we compute
\begin{eqnarray*} 
\partial b &= & \partial\varphi_j(\sigma)-\partial \psi_j(\sigma)-\partial \eta_{j-1}(\partial(\sigma))\\
& = &\varphi_{j-1}(\partial\sigma) - \psi_{j-1}(\partial\sigma) -\Big(      \varphi_{j-1}(\partial\sigma)-\psi_{j-1}(\partial\sigma)-\eta_{j-2}(\partial\partial\sigma)       \Big) = 0
\end{eqnarray*}
Thus, $b$ is a cycle, and since $H_j(\s^\infty)=0$, $b$ is also a boundary. Pick an arbitrary chain $c\in C_{j+1}(\s^\infty)$ with $\partial c=b$ and set $\eta_j(\sigma):=c$ and $\eta_j(\nu(\sigma)):=\nu_\sharp(c)$. We do this for all representative $j$-cells $\sigma$ and then extend $\eta_j$ by linearity. By definition, $\eta_j$ is equivariant and (\ref{eq:chain-homotopy}) is now satisfied also for $i=j$.
This completes the induction step and hence the proof.\end{proof}

\paragraph{Deleted products and Gauss maps.} Let $K$ be a finite simplicial complex. Then the Cartesian product $K\times K$ is a 
cell complex whose cells are the Cartesian products of pairs of simplices of $K$. The (\emph{combinatorial}) \emph{deleted product} $\delprod{K}$
of $K$ is defined as the polyhedral subcomplex of $K\times K$ whose cells are the products of vertex-disjoint pairs of simplices of $K$, 
i.e., $\delprod{K}:=\{\sigma \times \tau: \sigma,\tau \in K, \sigma \cap \tau=\emptyset\}$. The deleted product is equipped with a natural
free $\Z_2$-action that simply exchanges coordinates, $(x,y)\mapsto (y,x)$. Note that this action is cellular since each cell $\sigma\times \tau$
is mapped to $\tau\times \sigma$.

\begin{lemma}
\label{lem:Gauss-map}
If $f \colon |K| \hookrightarrow \R^d$ is an embedding (or, more generally, an almost-embedding) then\footnote{We remark that a classical result due to Haefliger and Weber \cite{Haefliger:PlongementsDifferentiablesDomaineStable-1962,Weber:PlongementsPolyedresDomaineMetastable-1967} asserts that if $\dim K \leq (2d-3)/3$ (the so-called \emph{metastable range}) then the existence of an equivariant map from $\delprod{K}$ to $\s^{d-1}$ is also \emph{sufficient} for the existence of an embedding $K\hookrightarrow \R^d$ (outside the metastable range, this fails); see \cite{Skopenkov:EmbeddingKnottingManifoldsEuclideanSpaces-2008} for further background.} there exists
an equivariant map $\tilde f\colon \delprod{K}\to S^{d-1}$.
\end{lemma}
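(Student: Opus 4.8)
The plan is to use the classical \emph{Gauss map} construction; this is the topological prototype for the chain-level arguments developed later in this section. Regard $\delprod{K}$ as a free $\Z_2$-space, namely the subspace $\bigcup\{|\sigma|\times|\tau| : \sigma,\tau\in K,\ \sigma\cap\tau=\emptyset\}$ of $|K|\times|K|$ equipped with the involution $(x,y)\mapsto(y,x)$, and define
$$\tilde f\colon \delprod{K}\longrightarrow S^{d-1},\qquad \tilde f(x,y):=\frac{f(x)-f(y)}{\|f(x)-f(y)\|}.$$
I claim that this is the desired equivariant map, and the proof reduces to checking three things: that $\tilde f$ is well defined, that it is continuous, and that it is equivariant.

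First I would verify well-definedness, which is the only step where the hypothesis on $f$ enters. Given $(x,y)\in\delprod{K}$, by definition of the deleted product there exist vertex-disjoint simplices $\sigma,\tau\in K$ with $x\in|\sigma|$ and $y\in|\tau|$. Since $\sigma\cap\tau=\emptyset$ and $f$ is an almost-embedding, the images $f(|\sigma|)$ and $f(|\tau|)$ are disjoint, hence $f(x)\neq f(y)$ and the denominator above does not vanish; thus $\tilde f(x,y)$ is a genuine unit vector in $\R^d$, i.e.\ a point of $S^{d-1}$. (If $f$ is an actual embedding this is even more transparent, since $f$ is injective and $x\neq y$ because $x$ and $y$ lie in disjoint simplices.)

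The remaining two points are purely formal. Continuity holds because $\tilde f$ is the composition of $(x,y)\mapsto f(x)-f(y)$, which by the previous paragraph is a continuous map $\delprod{K}\to\R^d\setminus\{0\}$, with the normalization $v\mapsto v/\|v\|$ from $\R^d\setminus\{0\}$ to $S^{d-1}$. For equivariance, exchanging the two coordinates replaces $f(x)-f(y)$ by $f(y)-f(x)=-(f(x)-f(y))$, so $\tilde f(y,x)=-\tilde f(x,y)$; since the $\Z_2$-action on $S^{d-1}$ is the antipodal map $v\mapsto -v$, this is exactly the statement that $\tilde f$ intertwines the two $\Z_2$-actions.

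The only place where anything could go wrong, and hence the main obstacle, is the well-definedness step: one must know that $f(x)\neq f(y)$ at \emph{every} point of $\delprod{K}$, which is precisely what the almost-embedding assumption provides (and is also the reason the construction lands in $S^{d-1}$ rather than in a contractible space). Everything else is a routine verification. I expect no further difficulties, and the same recipe — with ``$f(x)-f(y)$'' replaced by an appropriate difference of chains — is what one adapts at the chain level in the proof of Proposition~\ref{prop:obstruction-no-ae}.
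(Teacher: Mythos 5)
Your proposal is correct and is essentially the paper's own argument: the paper also defines the Gauss map $\tilde f(x,y)=\frac{f(x)-f(y)}{\|f(x)-f(y)\|}$ on $\delprod{K}$ and notes its equivariance, leaving the well-definedness via the almost-embedding hypothesis implicit, which you simply spell out.
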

\begin{proof}
Define $
\tilde f(x,y):=\frac{f(x)-f(y)}{\|f(x)-f(y)\|}$. This map, called the \emph{Gauss map}, is clearly equivariant.
\end{proof}

For the proof of Proposition~\ref{prop:obstruction-no-ae}, we use the following analogue of Lemma~\ref{lem:Gauss-map}.
\begin{lemma}
\label{lem:homological-Gauss-map}
Let $K$ be a finite simplicial complex. If $\gamma\colon C_\ast(K)\rightarrow
C_\ast(\R^d)$ is a homological almost-embedding
then there is a \nontrivial\ equivariant chain map (called the \emph{Gauss chain map})
$\tilde{\gamma}\colon C_\ast(\delprod{K})\to C_\ast(\s^{d-1})$.
\end{lemma}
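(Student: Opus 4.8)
The plan is to mimic the classical construction of the Gauss map (Lemma~\ref{lem:Gauss-map}), but one categorical level down, building the equivariant chain map $\tilde\gamma$ directly from the homological almost-embedding $\gamma$ rather than from an honest continuous map. The basic idea: in the topological setting one sends $(x,y)\in\delprod K$ to the normalized difference $\frac{f(x)-f(y)}{\|f(x)-f(y)\|}\in\s^{d-1}$; on the chain level, the analogue of ``taking differences of the images of two points'' is a suitable product (cross product / join-type) operation on singular chains in $\R^d$, composed with a chain-level model of the deformation retraction $\R^d\times\R^d\setminus\Diagonal \simeq \s^{d-1}$.

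First I would set up the target. Let $D:=\R^d\times\R^d\setminus\Diagonal$, where $\Diagonal$ is the diagonal; the involution swapping coordinates acts freely on $D$, and the map $(x,y)\mapsto \frac{x-y}{\|x-y\|}$ is an equivariant deformation retraction of $D$ onto the antipodal sphere $\s^{d-1}$ (embedded, say, as $\{(v,-v): \|v\|=1\}$, or one just uses the homotopy equivalence). So it suffices to produce a \nontrivial\ equivariant chain map $C_\ast(\delprod K)\to C_\ast(D)$ and then compose with the chain map induced by this equivariant retraction. To build the map into $C_\ast(D)$: the simplicial cross product (Eilenberg--Zilber) gives a chain map $C_\ast(K)\otimes C_\ast(K)\to C_\ast(|K|\times|K|)$, and $\gamma\otimes\gamma$ followed by this and by $(|f|\times|f|)_\sharp$-style bookkeeping produces, for a cell $\sigma\times\tau$ of $\delprod K$, the chain $\gamma(\sigma)\times\gamma(\tau)\in C_\ast(\R^d\times\R^d)$. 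The crucial point is that when $\sigma\cap\tau=\emptyset$, the almost-embedding hypothesis says $\mathrm{supp}\,\gamma(\sigma)$ and $\mathrm{supp}\,\gamma(\tau)$ are disjoint, so the singular simplices appearing in $\gamma(\sigma)\times\gamma(\tau)$ have image inside $D$ (a point $(p,q)$ with $p\neq q$). Hence the chain actually lands in $C_\ast(D)$, and by naturality of the cross product it is a chain map on $\delprod K$; equivariance is immediate from the fact that swapping $\sigma\leftrightarrow\tau$ corresponds to the swap on $D$ (the Eilenberg--Zilber map commutes with the swap up to sign, and we are over $\Z_2$ so signs vanish). \Nontriviality\ follows because $\gamma$ sends each vertex to an odd $0$-chain, so on a $0$-cell $\{v\}\times\{w\}$ of $\delprod K$ we get a $0$-chain which is a product of two odd $0$-chains, hence an odd $0$-chain of $D$, and this is preserved by the retraction to $\s^{d-1}$.

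The main obstacle I expect is making the ``cross product of singular chains lands in $D$'' step fully rigorous together with the commutation with boundaries and the $\Z_2$-action. Two technical wrinkles: (1) the Eilenberg--Zilber map is only a chain map up to natural chain homotopy in some formulations, and it does not strictly commute with the swap on the nose (there is a sign, irrelevant mod $2$, but one still needs the chosen model — e.g. the Alexander--Whitney or shuffle map — to be equivariant, or to replace it by its equivariant symmetrization); over $\Z_2$ this is harmless but must be stated carefully. (2) One must check that the support condition is really inherited by every singular simplex occurring with nonzero coefficient in the cross product, i.e. that $\mathrm{supp}(\gamma(\sigma)\times\gamma(\tau)) \subseteq \mathrm{supp}\,\gamma(\sigma)\times\mathrm{supp}\,\gamma(\tau)\subseteq D$; this is straightforward from the explicit simplicial formula for the cross product but should be spelled out. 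Once these are in place, composing with the equivariant chain map induced by the deformation retraction $D\to\s^{d-1}$ yields $\tilde\gamma$, and the three required properties (chain map, equivariant, \nontrivial) all follow, completing the proof.
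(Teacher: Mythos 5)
Your proposal is correct and follows essentially the same route as the paper's proof: factor $\tilde\gamma$ through the tensor product $\gamma(\sigma)\otimes\gamma(\tau)$, use the disjoint-supports condition of a homological almost-embedding to land in (chains on) the topological deleted product $\R^d\times\R^d\setminus\Diagonal$, pass to singular chains via the shuffle (Eilenberg--Mac Lane) cross product, and compose with the chain map induced by the equivariant projection/retraction onto $\s^{d-1}$. The technical wrinkles you flag are exactly the ones the paper addresses, choosing the shuffle model, whose prism triangulation is symmetric under the coordinate flip, so that the cross product is strictly equivariant and nontrivial over $\Z_2$.
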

The proof of this lemma is not difficult but a bit technical, so we postpone it until the end of this section.

\paragraph{Obstructions.} Here, we recall a standard method for proving the non-existence of equivariant maps between 
$\Z_2$-spaces. The arguments are formulated in the language of cohomology, and, as we will see, what they actually establish is the 
non-existence of \nontrivial\ equivariant chain maps.

Let $K$ be a finite simplicial complex and let $\delprod{K}$ be its (combinatorial) deleted product. By Proposition~\ref{prop:sinfinity-maps}, there exists an equivariant map $G_K\colon \delprod{K} \rightarrow \s^\infty$, which is unique up to equivariant homotopy. By factoring out the action of $\Z_2$, this induces a map $\overline{G}_K\colon\overline{K}\rightarrow \RP^\infty$ between the quotient spaces $\overline{K}=\delprod{K}/\Z_2$ and $\RP^\infty=\s^\infty/\Z_2$ (the infinite-dimensional real projective space), and the homotopy class of the map $\overline{G}_K$ depends only\footnote{We stress that this does not mean 
that there is only one homotopy class of continuous maps $\overline{K}\rightarrow \RP^\infty$; indeed, there exist such maps that do not come from equivariant maps $\delprod{K}\rightarrow \s^\infty$, for instance the constant map that maps all of $\overline{K}$ to a single point.} on $K$. Passing to cohomology, there is a uniquely defined induced homomorphism
$$\overline{G}^\ast_K\colon H^\ast(\RP^\infty)\rightarrow H^\ast (\overline{K}).$$
It is known that $H^d(\RP^\infty)\cong \Z_2$ for every $d\geq 0$. Letting $\xi^d$ denote the unique generator of $H^d(\RP^\infty)$,
there is a uniquely defined cohomology class 
$$\obs{d}(K):=\overline{G}^\ast_K(\xi^d),$$
called the \emph{van Kampen obstruction} (with $\Z_2$-coefficients) to embedding $K$ into $\R^d$. For more details and background regarding the van Kampen obstruction, we refer the reader to \cite{Melikhov:vanKampenRelatives-2009}.

The basic fact about the van Kampen obstruction (and the reason for its name) is that $K$ does not embed (not even almost-embed) into $\R^d$ if $\obs{d}(K)\neq 0$ (Proposition~\ref{prop:obstruction-almost-embeddings}). This follows from Lemma~\ref{lem:Gauss-map} and Part~(a) of the following lemma:
\begin{lemma} \label{lem:obs-no-map-or-chain-map}
Let $K$ be a simplicial complex and suppose that $\obs{d}(K)\neq 0$.
\begin{enumerate}
\item[\textup{(a)}]
Then there is no  equivariant map $\delprod{K}\rightarrow \s^{d-1}$.
\item[\textup{(b)}]
In fact, there is no \nontrivial\ equivariant chain map $C_\ast(\delprod{K})\rightarrow C_\ast(\s^{d-1})$.
\end{enumerate}
\end{lemma}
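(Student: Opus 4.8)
The plan is to prove part (b) directly — since part (a) follows from (b) by Lemma~\ref{lem:Gauss-map} (an equivariant map induces a nontrivial equivariant chain map), it suffices to establish the chain-map version. I will argue by contraposition: assuming there exists a nontrivial equivariant chain map $\psi\colon C_\ast(\delprod{K})\to C_\ast(\s^{d-1})$, I will show that $\obs{d}(K)=0$. The key idea is that such a $\psi$, composed with the inclusion-induced chain map $C_\ast(\s^{d-1})\hookrightarrow C_\ast(\s^\infty)$, gives a nontrivial equivariant chain map $\bar\psi\colon C_\ast(\delprod{K})\to C_\ast(\s^\infty)$ that happens to factor through $C_\ast(\s^{d-1})$, hence has zero image in $C_n(\s^\infty)$ for all $n\geq d$.

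First I would fix an equivariant map $G_K\colon\delprod{K}\to\s^\infty$ as in the definition of the obstruction, and let $(G_K)_\sharp\colon C_\ast(\delprod{K})\to C_\ast(\s^\infty)$ be the induced chain map, which is nontrivial and equivariant. By Lemma~\ref{lem:chain-homotopy}, $(G_K)_\sharp$ and $\bar\psi$ are equivariantly chain homotopic; in particular they induce the same homomorphism on $\Z_2$-equivariant cohomology, i.e. on $H^\ast$ of the quotient $\overline{K}=\delprod{K}/\Z_2$ with the appropriate (twisted, but here $\Z_2$ so untwisted) coefficients — concretely, on the cohomology of $\overline{K}$ obtained from the $\Z_2$-equivariant chain complex. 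The second step is to recall that $\obs{d}(K)$ is precisely the image, under the homomorphism induced by $(G_K)_\sharp$ (equivalently by $\overline{G}_K$), of the generator $\xi^d\in H^d(\RP^\infty)\cong\Z_2$; this generator is represented, at the chain level, by the cochain on $\s^\infty/\Z_2$ that evaluates to $1$ on (the class of) a single $d$-cell of the standard minimal CW structure on $\s^\infty$. The third step is the crucial observation: because $\bar\psi$ factors through $C_\ast(\s^{d-1})$, the composite cochain $\bar\psi^\ast(\xi^d)$ — obtained by pulling back a $d$-dimensional cochain along a chain map whose image lands in dimensions $\leq d-1$ — is identically zero. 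Since $(G_K)_\sharp$ and $\bar\psi$ are equivariantly chain homotopic, they induce the same map on cohomology, so $\obs{d}(K)=\overline{G}^\ast_K(\xi^d)=\bar\psi^\ast(\xi^d)=0$, contradicting the hypothesis. This proves (b), and (a) follows.

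The main obstacle I anticipate is bookkeeping the passage between the equivariant chain-level picture on $\delprod{K}$ and $\s^\infty$ and the quotient-level cohomological picture on $\overline{K}$ and $\RP^\infty$ where the obstruction $\obs{d}(K)$ officially lives — i.e. making precise that "equivariantly chain homotopic maps induce the same map on the cohomology of the quotient." One clean way to handle this is to note that $\s^\infty$ with its free $\Z_2$-action can be taken to have exactly one cell in each dimension up to the action (one pair of antipodal cells), so the $\Z_2$-equivariant cochain complex of $\s^\infty$ in each dimension is just $\Z_2$, and the generator $\xi^d$ corresponds to the nonzero element in degree $d$; pulling this back along $\bar\psi$ versus along $(G_K)_\sharp$ gives cohomologous cochains on $\delprod{K}$ (by the equivariant chain homotopy), which descend to cohomologous cochains on $\overline{K}$, and the one coming from $\bar\psi$ is zero for dimension reasons. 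A secondary subtlety is checking that "nontrivial" is exactly the condition needed to invoke Lemma~\ref{lem:chain-homotopy} — but that is immediate from the statement of that lemma. Everything else is routine diagram chasing.
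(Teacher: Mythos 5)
Your overall architecture is the same as the paper's: reduce (a) to (b), compose the hypothetical chain map $\psi$ with the inclusion-induced map $i_\sharp\colon C_\ast(\s^{d-1})\to C_\ast(\s^\infty)$, invoke Lemma~\ref{lem:chain-homotopy} to see that the composite is equivariantly chain homotopic to $(G_K)_\sharp$ and hence computes $\obs{d}(K)$, and then kill the pullback of $\xi^d$ using the fact that everything factors through the $(d-1)$-sphere. The flaw is in \emph{how} you kill the pullback. You assert that $i_\sharp\circ\psi$ ``has zero image in $C_n(\s^\infty)$ for all $n\geq d$'' and, correspondingly, that the pulled-back $d$-cochain is \emph{identically zero} for dimension reasons. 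This is false in the setting of the lemma: the chain complexes of $\s^{d-1}$ and $\s^\infty$ here are \emph{singular} chain complexes (this is forced by the intended application, since the Gauss chain map of Lemma~\ref{lem:homological-Gauss-map} lands in singular chains), and $C_n(\s^{d-1})\neq 0$ for every $n$; a chain map preserves degree, so its image in degree $d$ consists of singular $d$-chains supported in $\s^{d-1}$, which are plentiful, and a singular cochain representing $\xi^d$ may well evaluate nontrivially on them. Your fallback via the minimal CW structure on $\s^\infty$ has the same mismatch: $i_\sharp\circ\psi$ maps into singular, not cellular, chains, and even after inserting a cellular-versus-singular comparison you only learn that the pullback is \emph{cohomologous} to a cochain pulled back from $\s^{d-1}$, not that it is the zero cochain.

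The repair is small and is exactly the paper's argument: pass to cohomology of the quotients rather than working at the (co)chain level. Factoring out the $\Z_2$-action, the composite induces $\overline{\psi}^\ast\circ\overline{i}^\ast$ in cohomology, where $\overline{i}\colon\RP^{d-1}\to\RP^\infty$, so $\obs{d}(K)=\overline{\psi}^\ast\bigl(\overline{i}^\ast(\xi^d)\bigr)$, and $\overline{i}^\ast(\xi^d)\in H^d(\RP^{d-1})=0$ because $\RP^{d-1}$ is a $(d-1)$-dimensional complex. So your ``dimension reasons'' are correct, but they must be invoked for the cohomology group $H^d(\RP^{d-1})$, not for chain or cochain groups. (A minor further slip: deducing (a) from (b) uses only that a continuous equivariant map induces a nontrivial equivariant chain map, not Lemma~\ref{lem:Gauss-map}, which concerns almost-embeddings.)
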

Together with Lemma~\ref{lem:homological-Gauss-map}, Part~(b) of the lemma also
implies Proposition~\ref{prop:obstruction-no-ae}, as desired.
The simple observation underlying the proof of Lemma~\ref{lem:obs-no-map-or-chain-map} is the following

\begin{observation}
Suppose $\varphi\colon C_\ast(\delprod{K})\to C_\ast(\s^\infty)$ is a \nontrivial\ equivariant chain map (not necessarily induced by a continuous map). 
By factoring out the action of $\Z_2$, $\varphi$ induces a chain map $\overline{\varphi}\colon C_\ast(\overline{K})\rightarrow C_\ast(\RP^\infty)$.
The induced homomorphism in cohomology
$$\overline{\varphi}^\ast\colon H^\ast(\RP^\infty)\to H^\ast(\overline{K})$$
is \emph{equal} to the homomorphism $\overline{G}_K^\ast$ used in the definition of the Van Kampen obstruction, hence in particular
$$\obs{d}(K)=\overline{\varphi}^\ast(\xi^d).$$
\end{observation}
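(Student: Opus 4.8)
The plan is to reduce the claim to Lemma~\ref{lem:chain-homotopy}. Recall that the van Kampen obstruction is defined through the canonical equivariant map $G_K\colon\delprod{K}\to\s^\infty$ of Proposition~\ref{prop:sinfinity-maps} and the induced map $\overline{G}_K\colon\overline{K}\to\RP^\infty$ on quotients, via $\obs{d}(K)=\overline{G}_K^\ast(\xi^d)$ with $\overline{G}_K^\ast$ the homomorphism induced by $\overline{G}_K$ in cohomology. Since $G_K$ is continuous, its induced chain map $(G_K)_\sharp\colon C_\ast(\delprod{K})\to C_\ast(\s^\infty)$ is \nontrivial\ and equivariant, exactly like the given $\varphi$. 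So it is enough to establish: \textbf{(a)} any two \nontrivial\ equivariant chain maps $C_\ast(\delprod{K})\to C_\ast(\s^\infty)$ become chain homotopic after factoring out the $\Z_2$-action; and \textbf{(b)} the chain map obtained from $(G_K)_\sharp$ by factoring out the action induces $\overline{G}_K^\ast$ in cohomology. Granting these, chain-homotopic chain maps induce equal maps in cohomology, so $\overline{\varphi}^\ast=\overline{G}_K^\ast$, and evaluating at the generator $\xi^d\in H^d(\RP^\infty)$ gives $\obs{d}(K)=\overline{\varphi}^\ast(\xi^d)$.

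First I would make precise the operation ``factoring out the action of $\Z_2$'' on chains. If $X$ is a cell complex with a free cellular $\Z_2$-action $\nu$ and quotient $\overline{X}$, then the induced cellular chain map $q_\sharp\colon C_\ast(X)\to C_\ast(\overline{X})$ is surjective, and over $\Z_2$ its kernel in each degree is spanned by the chains $\sigma+\nu_\sharp(\sigma)$ as $\sigma$ ranges over the cells of $X$ (here freeness is used, so that every orbit has exactly two elements). Hence for any equivariant chain map $\psi\colon C_\ast(X)\to C_\ast(Y)$ between free cellular $\Z_2$-complexes, the rule $q_\sharp(x)\mapsto q_\sharp(\psi(x))$ is well defined, because $\psi(\sigma+\nu_\sharp\sigma)=(1+\nu_\sharp)\psi(\sigma)$ lies in the kernel of the quotient chain map of $Y$; this yields a chain map $\overline{\psi}\colon C_\ast(\overline{X})\to C_\ast(\overline{Y})$ with $\overline{\psi}\circ q_\sharp=q_\sharp\circ\psi$, and for $\psi=\varphi$ this is precisely the map $\overline{\varphi}$ of the statement. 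The same computation shows that an equivariant chain homotopy $\eta$ descends to homomorphisms $\overline{\eta}$ with $\overline{\eta}\circ q_\sharp=q_\sharp\circ\eta$; applying $q_\sharp$ to the identity $\psi-\psi'=\partial\eta+\eta\partial$ and using that $q_\sharp$ commutes with $\partial$ and is surjective then gives $\overline{\psi}-\overline{\psi'}=\partial\overline{\eta}+\overline{\eta}\,\partial$. Finally, since the quotient maps $q\colon\delprod{K}\to\overline{K}$ and $p\colon\s^\infty\to\RP^\infty$ satisfy $p\circ G_K=\overline{G}_K\circ q$, naturality of induced chain maps gives $p_\sharp\circ(G_K)_\sharp=(\overline{G}_K)_\sharp\circ q_\sharp$, so the descent of $(G_K)_\sharp$ is exactly $(\overline{G}_K)_\sharp$; since $\overline{G}_K^\ast$ is the map on cohomology induced by $\overline{G}_K$, equivalently by $(\overline{G}_K)_\sharp$, this proves (b).

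Part (a) is then immediate: $\delprod{K}$ carries a free cellular $\Z_2$-action, so Lemma~\ref{lem:chain-homotopy} says the two \nontrivial\ equivariant chain maps into $C_\ast(\s^\infty)$ are equivariantly chain homotopic, and by the previous paragraph the homotopy descends to a chain homotopy between their quotients. Dualizing with $\mathrm{Hom}(-,\Z_2)$, chain-homotopic chain maps become cochain-homotopic cochain maps and hence induce \emph{equal} homomorphisms $H^\ast(\RP^\infty)\to H^\ast(\overline{K})$, which closes the argument sketched in the first paragraph. The only genuine difficulty is purely clerical: one must verify that each passage to the quotient --- of chains, of chain maps, and of chain homotopies --- is well defined, and this really does rely on both freeness of the action and the use of $\Z_2$-coefficients; one must also track the reversal of direction when moving from chains to cohomology. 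Beyond Lemma~\ref{lem:chain-homotopy}, no new idea is needed.
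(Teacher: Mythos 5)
Your argument is correct and follows the paper's own route: invoke Lemma~\ref{lem:chain-homotopy} to get an equivariant chain homotopy between $\varphi$ and $(G_K)_\sharp$, pass to the quotient by the $\Z_2$-action, and conclude that chain-homotopic maps induce identical homomorphisms in cohomology. The paper leaves the descent of chain maps and chain homotopies to the quotient implicit, whereas you spell it out (kernel of $q_\sharp$ spanned by $\sigma+\nu_\sharp\sigma$, etc.); this is a welcome elaboration but not a different proof.
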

\begin{proof}
By Lemma~\ref{lem:chain-homotopy}, $\varphi$ is equivariantly chain homotopic to the \nontrivial\ equivariant chain map $(G_K)_\sharp$ induced by the map $G_K$.
Thus, after factoring out the $\Z_2$-action, the chain maps $\overline{\varphi}$ and $(\overline{G}_K)_\sharp$ from $C_\ast(\overline{K})$ to $C_\ast(\RP^\infty)$ are chain homotopic, and so induce identical homomorphisms in cohomology. 
\end{proof}

\begin{proof}[Proof of Lemma~\ref{lem:obs-no-map-or-chain-map}]
If there exists an equivariant map $f\colon \delprod{K}\to \s^{d-1}$, then the induced chain map $f_\sharp\colon C_\ast(\delprod{K})\to C_\ast(\s^{d-1})$ is equivariant and \nontrivial, so (b) implies (a), and it suffices to prove the former. 

Next, suppose for a contradiction that $\psi\colon
C_\ast(\delprod{K})\rightarrow C_\ast(\s^{d-1})$ is a \nontrivial\ equivariant
chain map. Let $i\:\colon \s^{d-1}\rightarrow \s^\infty$ denote the inclusion
map, and let $i_\sharp\colon C_\ast(\s^{d-1})\rightarrow C_\ast(\s^\infty)$ denote
the induced equivariant, \nontrivial\ chain map. Then the composition
$\varphi=(i_\sharp\circ \psi)\colon C_\ast(\delprod{K})\to C_\ast(\s^\infty)$ is
also \nontrivial\ and equivariant, and so, by the preceding observation, for
the induced homomorphism in cohomology, we get
$$\obs{d}(K)=\overline{(i_\sharp\circ
\psi)}^\ast(\xi^d)=\overline{\psi}^\ast\left(\overline{i}^\ast(\xi^d)\right).$$
However, $\overline{i}^\ast(\xi^d)\in H^d(\RP^{d-1})=0$ (for reasons of
dimension), hence $\obs{d}(K)=0$, contradicting our assumption.  
\end{proof}

\begin{remark}
The same kind of reasoning also yields the well-known \emph{Borsuk--Ulam Theorem}, which asserts that there is no equivariant map $\s^d\to \s^{d-1}$, using the fact
that the inclusion $\overline{i}\colon \RP^d\to \RP^\infty$ (induced by the equivariant inclusion $i\colon \s^d\rightarrow \s^\infty$) has the property that $\overline{i}^\ast(\xi^d)$,
the pullback of the generator $\xi^d\in H^d(\RP^\infty)$, is \emph{nonzero}.\footnote{In fact, it is known that $H^\ast(\RP^\infty)$ is isomorphic to the polynomial ring $\Z_2[\xi]$, that $H^\ast(\RP^d)\cong\Z_2[\xi]/(\xi^{d+1})$, and that $\overline{i}^\ast$ is just the quotient map.} In fact, once again one gets a homological version of the Borsuk--Ulam theorem for free: there is no \nontrivial\ equivariant chain map $C_\ast(\s^d)\rightarrow C_\ast(\s^{d-1})$.
\end{remark}

\begin{proof}[Proof of Lemma~\ref{l:HomRadon}]
  It is not hard to see that the deleted product $\delprod{\partial \simplex{d+1}} =  \delprod{\simplex{d+1}}$ of the
boundary of $(d+1)$-simplex is combinatorially isomorphic to the boundary of a certain
convex polytope and hence homeomorphic to $\s^d$(respecting the antipodality
action), see
\cite[Exercise~5.4.3]{Matousek:BorsukUlam-2003}. Thus, the assertion
$\obs{d}(\partial \simplex{d+1})\neq 0$ follows immediately from the preceding remark
(the homological proof of the Borsuk--Ulam theorem). Together with
Proposition~\ref{prop:obstruction-no-ae}, this implies that there
is no homological almost-embedding of $\partial \simplex{d+1}$ in $\R^d$.  
\end{proof}

The proof of Proposition~\ref{prop:obstruction-no-ae} is complete, except for the following:

\begin{proof}[Proof of Lemma~\ref{lem:homological-Gauss-map}]
Once again, we essentially mimic the definition of the Gauss map on the level of chains. There is one minor technical difficulty 
due to the fact that the cells of $\delprod{K}$ are products of simplices, whereas the singular homology of spaces is based on 
maps whose domains are simplices, not products of simplices (this is the same issue that arises in the proof of K\"unneth-type
formulas in homology).

Assume that $\gamma\colon C_\ast(K)\rightarrow C_\ast(\R^d)$ is a homological
almost-embedding. The desired \nontrivial\ equivariant chain map
$\delprod{\gamma}\colon C_\ast(\delprod{K})\rightarrow C_\ast(\s^{d-1})$ will be defined as the composition of three intermediate \nontrivial\ equivariant chain maps
$$
\xymatrix@C=30pt{C_\ast(\delprod{K}) \ar@/_15pt/[rrr]_{\delprod{\gamma}=p_\sharp\circ \beta\circ \alpha} \ar[r]^{\alpha}& D_\ast \ar[r]^{\beta} & C_\ast(\delprod{\R^d}) \ar[r]^{p_\sharp} & C_\ast(\s^{d-1}).}
$$
These maps and intermediate chain complexes will be defined presently. 

We define $D_\ast$ as a chain subcomplex of the tensor product
$C_\ast(\R^d)\otimes C_\ast(\R^d)$. The tensor product chain complex
has a basis consisting of all elements of the form $s\otimes t$, where
$s$ and $t$ range over the singular simplices of $\R^d$, and we take
$D_\ast$ as the subcomplex spanned by all $s\otimes t$ for which $s$
and $t$ have disjoint supports (note that $D_\ast$ is indeed a chain
subcomplex, i.e., closed under the boundary operator, since if $s$ and
$t$ have disjoint supports, then so do any pair of simplices that
appear in the boundary of $s$ and of $t$, respectively). The chain
complex $C_\ast(\delprod{K})$ has a canonical basis consisting of
cells $\sigma\times \tau$, and the chain map $\alpha$ is defined on
these basis elements by ``tensoring'' $\gamma$ with itself, i.e.,
$$\alpha(\sigma\times \tau):=\gamma(\sigma)\otimes \gamma(\tau).$$
Since $\gamma$ is nontrivial, so is $\alpha$, the disjointness properties of $\gamma$ ensure that the image of $\alpha$ does indeed lie in $D_\ast$, and $\alpha$ is clearly $\Z_2$-equivariant.

Next, consider the Cartesian product $\R^d\times \R^d$ with the natural $\Z_2$-action given by flipping coordinates.
This action is not free since it has a nonempty set of fixed points, namely the ``diagonal'' $\Diagonal=\{(x,x):x\in \R^d\}$. 
However, the action on $\R^d\times \R^d$ restricts to a free action on the
subspace $\widetilde{\R^d}:=(\R^d\times \R^d)\setminus \Diagonal$ obtained by
removing the diagonal (this subspace is sometimes called the topological
deleted product of $\R^d$). 
Moreover, there exists an equivariant map $p\colon \delprod{\R^d}\rightarrow \s^{d-1}$ defined as follows: we identify $\s^{d-1}$ with
the unit sphere in the orthogonal complement $\Diagonal^\bot=\{(w,-w)\in \R^d:w\in \R^d\}$ and take $p\colon \delprod{\R^d}\to \s^{d-1}$ 
to be the orthogonal projection onto $\Delta^\bot$ (which sends $(x,y)$ to $\frac{1}{2}(x-y,y-x)$), followed by renormalizing,
$$
p(x,y):=\frac{\frac{1}{2}(x-y,y-x)}{\|\frac{1}{2}(x-y,y-x)\|}\in \s^{d-1}\subset \Diagonal^\bot.
$$
The map $p$ is equivariant and so the induced chain map $p_\sharp$ is equivariant and nontrivial.

It remains to define $\beta\colon D_\ast\rightarrow C_\ast(\delprod{\R^d})$. For this, we use a standard chain map
$$\textup{EML}\colon C_\ast(\R^d)\otimes C_\ast(\R^d)\to C_\ast(\R^d\times \R^d),$$
sometimes called the Eilenberg--Mac Lane chain map, and then take $\beta$ to be the restriction to $D_\ast$.

Given a basis element $s\otimes t$ of $C_\ast(\R^d)\otimes C_\ast(\R^t)$, where $s\colon \Delta_p\to \R^d$ and $t\colon \Delta_q\to \R^d$ are singular simplices, we can view $s\otimes t$ as the map $s\otimes t\colon \Delta_p\times \Delta_q\to \R^d\times \R^d$ with $(x,y)\mapsto (s(x),t(y)).$ This is almost like a singular simplex in $\R^d\times \R^d$, except that the domain is not a simplex but a prism (product of simplices). The Eilenberg--Mac Lane chain map is defined by prescribing a systematic and coherent way of triangulating products of simplices $\Delta_p\times \Delta_q$ that is consistent with taking boundaries; then
$\textup{EML}(s\otimes t)\in C_{p+q}(\R^d\times \R^d)$ is defined as the singular chain whose summands are the restrictions of
the map $\sigma\otimes \tau\colon \Delta_p\times \Delta_q$ to the
$(p+q)$-simplices that appear in the triangulation of $\Delta_p\times
\Delta_q$. We refer to \cite{GonRea05} for explicit formulas for the chain map
$\textup{EML}$. What is important for us is that the chain map $\textup{EML}$
is equivariant and \nontrivial. Both properties follow more or less directly
from the construction of the triangulation of the prisms $\Delta_p\times
\Delta_q$, which can be explained as follows: Implicitly, we assume that the
vertex sets $\{0,1,\ldots,p\}$ and $\{0,1,\ldots,q\}$ are totally ordered in
the standard way. The vertex set of $\Delta_p\times \Delta_q$ is the grid
$\{0,1,\ldots,p\}\times \{0,1,\ldots,q\}$, on which we consider the
coordinatewise partial order defined by $(x,y)\leq (x',y')$ if $x\leq x'$ and
$y\leq y'$. Then the simplices of the triangulation are all totally ordered
subsets of this partial order. Thus, if
$\sigma=\{(x_0,y_0),(x_1,y_1),\ldots,(x_r,y_r)\}$ is a simplex that appears in
the triangulation of $\Delta_p\times \Delta_q$ then the simplex
$\sigma=\{(y_0,x_0),(y_1,x_1),\ldots,(y_r,x_r)\}$ obtained by flipping all
coordinates appears in the triangulation of $\Delta_q\times \Delta_p$; see
Figure~\ref{f:EML_eq}. This implies equivariance of $\textup{EML}$ (and it is \nontrivial\ since it maps a
single vertex to a single vertex).
\end{proof}
\begin{figure}
  \begin{center}
    \includegraphics{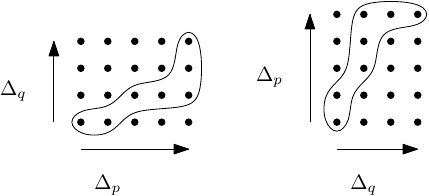}
    \caption{A simplex in a triangulation of $\Delta_p \times \Delta_q$ and its
    twin in $\Delta_q \times \Delta_p$.}
    \label{f:EML_eq}
  \end{center}
\end{figure}

\section{Helly-type theorems from non-embeddability}\label{s:history}

We now detail the technique outlined in Section~\ref{ss:outline} and illustrate
it on a few examples before formalizing its ingredients.

%
%
\paragraph{Notation.}
Given a set $X$ we let $2^X$ and $\binom{X}{k}$ denote, respectively,
the set of all subsets of $X$ (including the empty set) and the set of
all $k$-element subsets of $X$. If $f:X \to Y$ is an arbitrary map
between sets then we abuse the notation by writing $f(S)$ for $\{f(s)
\mid s \in S\}$ for any $S \subseteq X$; that is, we implicitly extend
$f$ to a map from $2^X$ to $2^Y$ whenever convenient.

\subsection{Homotopic assumptions}
\label{ss:hom_aspts}

Let $\F=\{U_1,U_2, \ldots, U_n\}$ denote a family of subsets of
$\R^d$. We assume that $\F$ has empty intersection and that any proper
subfamily of $\F$ has nonempty intersection. Our goal is to show how
various conditions on the topology of the intersections of the
subfamilies of $\F$ imply bounds on the cardinality of $\F$.  For any
(possibly empty) proper subset $I$ of $[n] = \{1,2,\ldots, n\}$ we
write $\U{I}$ for $\bigcap_{i \in [n]\setminus I} U_i$. We also put
$\U{[n]} = \R^d$.

\paragraph{Path-connected intersections in the plane.} 

Consider the case where $d=2$ and the intersections $\bigcap \G$ are
path-connected for all subfamilies $\G \subsetneq \F$. Since every
intersection of $n-1$ members of $\F$ is nonempty, we can pick, for
every $i \in [n]$, a point $p_i$ in $\U{\{i\}}$. Moreover, as every
intersection of $n-2$ members of $\F$ is connected, we can connect any
pair of points $p_i$ and $p_j$ by an arc $s_{i,j}$ inside
$\U{\{i,j\}}$. We thus obtain a drawing of the complete graph on $[n]$
in the plane in a way that the edge between $i$ and $j$ is contained
in $\U{\{i,j\}}$ (see Figure~\ref{f:k5}). If $n \ge 5$ then the
stronger form of non-planarity of $K_5$ implies that there exist two
edges $\{i,j\}$ and $\{k,\ell\}$ with no vertex in common and whose
images intersect (see
Proposition~\ref{prop:obstruction-almost-embeddings} and
Lemma~\ref{obs-nonzero-skeleton}). Since $\U{\{i,j\}} \cap
\U{\{k,\ell\}} = \bigcap \F = \emptyset$, this cannot happen and $\F$
has cardinality at most $4$.

\begin{figure}
  \begin{center}
    \includegraphics{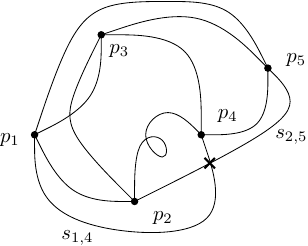}
    \caption{Two edges (arcs) with no common vertices intersect (in this case
    $s_{1,4}$ and $s_{2,5}$). The point in the intersection then belongs to all
  sets in $\F$.}
\label{f:k5}
  \end{center}
\end{figure}

\paragraph{$\lceil d/2\rceil$-connected intersections in $\R^d$.}

The previous argument generalizes to higher dimension as follows.
Assume that the intersections $\bigcap \G$ are
$\lceil d/2\rceil$-connected\footnote{Recall that a set is
  $k$-connected if it is connected and has vanishing homotopy in
  dimension $1$ to $k$.}  for all subfamilies $\G \subsetneq \F$. Then
we can build by induction a function $f$ from the
$\lceil d/2\rceil$-skeleton of $\simplex{n-1}$ to $\R^d$ in a way
that for any simplex $\sigma$, the image $f(\sigma)$ is contained
in $\U{\sigma}$. The previous case shows how to build such a function
from the $1$-skeleton of $\simplex{n-1}$. Assume that a function $f$
from the $\ell$-skeleton of $\simplex{n-1}$ is built. For every
$(\ell+1)$-simplex $\sigma$ of $\simplex{n-1}$, for every facet $\tau$ of
$\sigma$, we have $f(\tau) \subset \U{\tau} \subseteq \U{\sigma}$.
Thus, the set
\[ \bigcup_{\tau \hbox{ facet of } \sigma} f(\tau)\]
is the image of an $\ell$-dimensional sphere contained in
$\U{\sigma}$, which has vanishing homotopy of dimension $\ell$. We can extend
$f$ from this sphere to an
$(\ell+1)$-dimensional ball so that the image is still contained in
$\U{\sigma}$. This way we extend $f$ 
to the $(\ell+1)$-skeleton of $\simplex{n-1}$.


The Van Kampen-Flores theorem asserts that for any continuous function
from $\skelsim{k}{2k+2}$ to $\R^{2k}$ there exist two disjoint faces
of $\skelsim{k}{2k+2}$ whose images intersect (see
Proposition~\ref{prop:obstruction-almost-embeddings} and
Lemma~\ref{obs-nonzero-skeleton}). So, if $n \ge 2\lceil d/2
\rceil+3$, then there exist two disjoint simplices $\sigma$ and $\tau$
of $\skelsim{\lceil d/2 \rceil}{2\lceil d/2 \rceil + 2}$ such that
$f(\sigma) \cap f(\tau)$ is nonempty.  Since $f(\sigma) \cap f(\tau)$
is contained in $\U{\sigma} \cap \U{\tau} = \bigcap \F = \emptyset$,
this is a contradiction and $\F$ has cardinality at most $2\lceil
d/2\rceil +2$.

By a more careful inspection of odd dimensions, the bound $2\lceil
d/2\rceil +2$ can be improved to $d + 2$. We skip this in the
homotopic setting, but we will do so in the homological setting (which
is stronger anyway); see Corollary~\ref{c:helly_d/2-connected} below.

\paragraph{Contractible intersections.}

Of course, the previous argument works with other non-embeddability
results. For instance, if the intersections $\bigcap \G$ are
contractible for all subfamilies then the induction yields a map $f$
from the $d$-skeleton of $\simplex{n-1}$ to $\R^d$ with the property
that for any simplex $\sigma$, the image $f(\sigma)$ is contained in
$\U{\sigma}$. The topological Radon theorem~\cite{bajmoczy_barany79} (see
also~\cite[Theorem 5.1.2]{Matousek:BorsukUlam-2003}) states that for
any continuous function from $\simplex{d+1}$ to $\R^d$ there exist two
disjoint faces of $\simplex{d+1}$ whose images intersect. So, if $n
\ge d+2$ we again obtain a contradiction (the existence of two
disjoint simplices $\sigma$ and $\tau$ such that $f(\sigma) \cap
f(\tau) \neq \emptyset$ whereas $\U{\sigma} \cap \U{\tau} = \bigcap \F
= \emptyset$), and the cardinality of $\F$ must be at most $d+1$.

\subsection{From homotopy to homology}

The previous reasoning can be transposed to homology as follows.
Assume that for $i=0, 1, \ldots, k-1$ and all subfamilies $\G \subsetneq
\F$ we have $\tilde{\beta}_i(\bigcap\G)=0$. We construct a
nontrivial\footnote{See Definition~\ref{d:homrep}.} chain map $f$ from
the simplicial chains of $\skelsim{k}{n-1}$ to the singular chains of
$\R^d$ by increasing dimension:
\begin{itemize}
\item For every $\{i\}\subset [n]$ we let $p_i \in \U{\{i\}}$. This is
  possible since every intersection of $n-1$ members of $\F$ is
  nonempty. We then put $f(\{i\}) = p_i$ and extend it by linearity
  into a chain map from $\skelsim{0}{n-1}$ to $\R^d$. Notice that $f$
  is nontrivial and that for any $0$-simplex $\sigma \subseteq [n]$,
  the support of $f(\sigma)$ is contained in $\U{\sigma}$.

\item Now, assume, as an induction hypothesis, that there exists a
  nontrivial chain map $f$ from the simplicial chains of
  $\skelsim{\ell}{n-1}$ to the singular chains of $\R^d$ with the
  property that for any $(\le \ell)$-simplex $\sigma \subseteq [n], \ell < k$,
  the support of $f(\sigma)$ is contained in $\U{\sigma}$. Let
  $\sigma$ be a $(\ell+1)$-simplex in $\skelsim{\ell+1}{n-1}$.  For
  every $\ell$-dimensional face $\tau$ of $\sigma$, the support of
  $f(\tau)$ is contained in $\U{\tau} \subseteq \U{\sigma}$. It
  follows that the support of $f(\partial \sigma)$ is contained in
  $\U{\sigma}$, which has trivial homology in dimension $\ell+1$. As a
  consequence, $f(\partial \sigma)$ is a boundary in $\U{\sigma}$. We
  can therefore extend $f$ to every simplex of dimension $\ell+1$ and
  then, by linearity, to a chain map from the simplicial chains of
  $\skelsim{\ell+1}{n-1}$ to the singular chains of $\R^d$. This chain
  map remains nontrivial and, by construction, for any $(\le
  \ell+1)$-simplex $\sigma \subseteq [n]$, the support of $f(\sigma)$
  is contained in $\U{\sigma}$.
\end{itemize}
If $\sigma$ and $\tau$ are disjoint simplices of $\skelsim{k}{n-1}$
then the intersection of the supports of $f(\sigma)$ and $f(\tau)$ is
contained in $ \U{\sigma} \cap \U{\tau} = \bigcap \F = \emptyset$ and
these supports are disjoint. It follows that $f$ is not only a nontrivial chain
map, but also a homological almost-embedding in $\R^d$.  We can then use obstructions
to the existence of homological almost-embeddings to bound the
cardinality of $\F$. Specifically, since we assumed that $\F$ has empty intersection and any proper subfamily of $\F$ has nonempty intersection,
Corollary~\ref{c:nohomrep-odd} implies:

\begin{corollary}
\label{c:helly_d/2-connected}
  Let $\F$ be a family of subsets of $\R^d$ such that $\tilde
  \beta_i(\bigcap \G) = 0$ for every $\G \subsetneq \F$ and $i=0,1,
  \ldots, \lceil d/2\rceil - 1$. Then the Helly number of $\F$ is at
  most $d +2$.
\end{corollary}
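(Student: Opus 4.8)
The plan is to assemble the inductive construction of Section~\ref{s:history} (the paragraph ``From homotopy to homology'') with the non-embeddability statement of Corollary~\ref{c:nohomrep-odd}. By the definition of the Helly number it suffices to show that every subfamily $\G\subseteq\F$ that has empty intersection but all of whose proper subfamilies have nonempty intersection contains at most $d+2$ sets (if no such $\G$ exists the Helly number is $1$, and there is nothing to prove). So first I would fix such a $\G$, rename it $\F$, set $n:=|\F|$, and record that $\bigcap\F=\emptyset$ while $\bigcap\G'\neq\emptyset$ for every $\G'\subsetneq\F$; the hypothesis $\tilde\beta_i\pth{\bigcap\G'}=0$ for $i\leq\lceil d/2\rceil-1$ is inherited, since a proper subfamily of the renamed $\F$ is a proper subfamily of the original one. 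Assume, for contradiction, that $n\geq d+3$, and put $k:=\lceil d/2\rceil$.

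The core step is to construct, by induction on the skeleton dimension $\ell=0,1,\dots,k$, a \nontrivial\ chain map $f\colon C_\ast(\skelsim{\ell}{n-1})\to C_\ast(\R^d)$ such that the support of $f(\sigma)$ lies in $\U{\sigma}$ for every simplex $\sigma$, where as in Section~\ref{s:history} the vertices are indexed by $[n]$ and $\U{\sigma}=\bigcap_{i\in[n]\setminus\sigma}U_i$. For $\ell=0$ I would use that every intersection of $n-1$ members of $\F$ is nonempty to choose $p_i\in\U{\{i\}}$ and set $f(\{i\}):=p_i$; this is \nontrivial\ because each vertex goes to a single point. For the passage from $\ell$ to $\ell+1\leq k$: given an $(\ell+1)$-simplex $\sigma$, the chain $f(\partial\sigma)$ has support in $\bigcup_{\tau}\U{\tau}\subseteq\U{\sigma}$, where $\tau$ ranges over the facets of $\sigma$ (using that $\tau\subsetneq\sigma$ forces $\U{\tau}\subseteq\U{\sigma}$), and it is an $\ell$-cycle there. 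Since $1\leq|\sigma|<n$, the set $\U{\sigma}$ is the intersection of a proper subfamily of $\F$, hence nonempty by minimality and satisfying $\tilde\beta_\ell(\U{\sigma})=0$ because $\ell\leq k-1=\lceil d/2\rceil-1$ (for $\ell=0$ this just says $\U{\sigma}$ is nonempty and path-connected, so the reduced $0$-cycle $f(\partial\sigma)$ bounds in it). Therefore $f(\partial\sigma)$ is a boundary in $\U{\sigma}$; I would pick such a bounding chain, declare it to be $f(\sigma)$, and extend by linearity. This preserves ``\nontrivial'' and the support property, and after $k$ steps produces the desired $f\colon C_\ast(\skelsim{k}{n-1})\to C_\ast(\R^d)$.

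Next I would check that $f$ is in fact a homological almost-embedding of $\skelsim{k}{n-1}$ in $\R^d$ in the sense of Definition~\ref{d:homrep}: if $\sigma$ and $\tau$ are vertex-disjoint simplices of $\skelsim{k}{n-1}$ then the supports of $f(\sigma)$ and $f(\tau)$ lie respectively in $\U{\sigma}$ and $\U{\tau}$, and $\U{\sigma}\cap\U{\tau}=\bigcap_{i\in[n]\setminus(\sigma\cap\tau)}U_i=\bigcap\F=\emptyset$, so these supports are disjoint. Restricting $f$ to the induced subcomplex on any $d+3$ of the $n$ vertices yields a homological almost-embedding of $\skelsim{\lceil d/2\rceil}{d+2}$ in $\R^d$ (restriction of a chain map to the simplicial chains of a subcomplex, still \nontrivial, and disjoint simplices stay disjoint), which contradicts Corollary~\ref{c:nohomrep-odd}. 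Hence $n\leq d+2$, which is the claimed bound. For $d\geq2$ the bound is attained by the $b=1$ instance of Example~\ref{ex:lowerbound}.

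Since Section~\ref{s:history} essentially carries out this construction already, I do not expect a serious obstacle; the points that need care are internal to the induction. One should verify that the obstruction to extending the chain map across a new $(\ell+1)$-simplex $\sigma$ is genuinely a class of $\tilde H_\ell(\U{\sigma})$ — so that the hypothesis $\tilde\beta_\ell=0$ is exactly what is used — keeping the usual reduced-versus-unreduced bookkeeping straight in dimension $0$; and one should make sure the dimension count lands precisely on $\skelsim{\lceil d/2\rceil}{d+2}$, so that the sharper odd-dimensional obstruction of Corollary~\ref{c:nohomrep-odd} applies rather than the cruder bound $2\lceil d/2\rceil+2$ that would come directly from Corollary~\ref{c:nohomrep}. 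Everything else is formal.
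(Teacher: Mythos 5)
Your proposal is correct and follows essentially the same route as the paper: the inductive construction of a nontrivial constrained chain map on the $\lceil d/2\rceil$-skeleton (using $\tilde\beta_\ell(\U{\sigma})=0$ over $\Z_2$ to fill each new simplex), the observation that empty total intersection makes it a homological almost-embedding, and the contradiction with Corollary~\ref{c:nohomrep-odd}. Your explicit reduction to a minimal subfamily and the reduced-homology bookkeeping in dimension~$0$ are exactly the details the paper leaves implicit, so there is nothing to fix.
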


The homological Radon's lemma (Lemma~\ref{l:HomRadon}) yields (noting $\partial
\Delta_{d+1} = \Delta_{d+1}^{(d)}$):

\begin{corollary}
  \label{c:helly_hom_radon}
  Let $\F$ be a family of subsets of $\R^d$ such that $\tilde
  \beta_i(\bigcap \G) = 0$ for every $\G \subsetneq \F$ and $i=0,1,
  \ldots, d - 1$. Then the Helly number of $\F$ is at most $d+1$.
\end{corollary}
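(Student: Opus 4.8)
The plan is to read off Corollary~\ref{c:helly_hom_radon} from the chain-map construction of Section~\ref{s:history} together with the homological Radon lemma (Lemma~\ref{l:HomRadon}), using the observation $\partial\simplex{d+1}=\skelsim{d}{d+1}$. First I would handle the trivial case: if $\bigcap\F\neq\emptyset$ the Helly number is $1\le d+1$ by convention, so assume $\bigcap\F=\emptyset$. Let $\G=\{U_1,\dots,U_n\}$ be an arbitrary subfamily of $\F$ with $\bigcap\G=\emptyset$ and $\bigcap\G'\neq\emptyset$ for every $\G'\subsetneq\G$; by definition of the Helly number it suffices to show $n\le d+1$. Note that any such $\G'$ is also a proper subfamily of $\F$ (since $|\G'|<|\G|\le|\F|$), so the hypothesis $\tilde\beta_i(\bigcap\G')=0$ for $0\le i\le d-1$ holds for every intersection arising below.

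Next I would run the induction of the ``From homotopy to homology'' subsection with $k=d$. Writing $\U{\sigma}=\bigcap_{i\in[n]\setminus\sigma}U_i$ for a simplex $\sigma$ on the vertex set $[n]$, one constructs, dimension by dimension, a \nontrivial\ chain map $f\colon C_\ast(\skelsim{d}{n-1})\to C_\ast(\R^d)$ such that the support of $f(\sigma)$ lies in $\U{\sigma}$ for every simplex $\sigma$ of $\skelsim{d}{n-1}$. The step from the $\ell$-skeleton to the $(\ell+1)$-skeleton, $0\le\ell\le d-1$, goes through because for an $(\ell+1)$-simplex $\sigma$ the chain $f(\partial\sigma)$ is a cycle supported in $\U{\sigma}$, and $\U{\sigma}$ — the intersection of the proper subfamily $\{U_i:i\in[n]\setminus\sigma\}$ — satisfies $\tilde\beta_\ell(\U{\sigma})=0$, so $f(\partial\sigma)$ bounds there and one can set $f(\sigma)$ to be a chain it bounds. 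Since $\bigcap\G=\emptyset$, for disjoint simplices $\sigma,\tau$ we have $\U{\sigma}\cap\U{\tau}=\bigcap\G=\emptyset$, so $f(\sigma)$ and $f(\tau)$ have disjoint supports; thus $f$ is a homological almost-embedding of $\skelsim{d}{n-1}$ in $\R^d$ (Definition~\ref{d:homrep}).

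Finally, suppose for contradiction that $n\ge d+2$. Then $\skelsim{d}{n-1}$ contains, as a subcomplex, the boundary $\partial\simplex{d+1}=\skelsim{d}{d+1}$ of the $(d+1)$-simplex spanned by any $d+2$ of the vertices. Restricting $f$ to $C_\ast(\partial\simplex{d+1})$ yields a chain map that is again \nontrivial\ (vertex images are unchanged) and whose images of disjoint simplices still have disjoint supports (disjointness is inherited from $\skelsim{d}{n-1}$); hence the restriction is a homological almost-embedding of $\partial\simplex{d+1}$ in $\R^d$, contradicting Lemma~\ref{l:HomRadon}. Therefore $n\le d+1$. The bound is tight already for convex sets — for instance, the $d+1$ facets of a $d$-simplex have empty common intersection while every proper subfamily intersects in a face, which is contractible.

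I do not anticipate a genuine obstacle, as all the real content — the inductive construction of the homological almost-embedding and the obstruction-theoretic input of Lemma~\ref{l:HomRadon} — is already available. The only points that need a line of verification are the two bookkeeping facts used above: that every intersection occurring in the induction is that of a \emph{proper} subfamily of $\F$ (so the Betti-number hypothesis applies), and that restricting a homological almost-embedding to a subcomplex is again a homological almost-embedding; both are immediate from the definitions.
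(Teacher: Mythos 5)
Your proposal is correct and follows essentially the same route as the paper: it runs the inductive construction of the ``From homotopy to homology'' subsection with $k=d$ on a minimal witness subfamily, obtains via Lemma~\ref{l:homrep} a homological almost-embedding of $\partial\simplex{d+1}=\skelsim{d}{d+1}$ whenever the witness has at least $d+2$ members, and contradicts Lemma~\ref{l:HomRadon}. The bookkeeping points you flag (properness of the subfamilies whose intersections appear, and restriction to a subcomplex remaining a homological almost-embedding) are indeed the only checks needed and are handled exactly as in the paper.
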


\begin{remark}
  \label{r:tight_bounds}
    The following modification of Example~\ref{ex:necessary} shows
    that the two previous statements are sharp in various ways.
    First assume that for some values $k,n$ there exists some
    embedding $f$ of $\skelsim{k}{n - 1}$ into $\R^d$. Let $K_i$ be
    the simplicial complex obtained by deleting the $i$th vertex of
    $\skelsim{k}{n - 1}$ (as well as all simplices using that vertex)
    and put $U_i := f(K_i)$. The family $\F = \{U_1, \dots, U_n\}$ has
    Helly number exactly $n$, since it has empty intersection and all
    its proper subfamilies have nonempty intersection. Moreover, for
    every $\G \subseteq \F$, $\bigcap \G$ is the image through $f$ of
    the $k$-skeleton of a simplex on $|\F \setminus \G|$ vertices, and
    therefore $\tilde \beta_i(\bigcap \G) = 0$ for every $\G \subseteq
    \F$ and $i = 0,\dots, k - 1$. Now, such an embedding exists for:
\begin{description}
\item[$k=d$ and $n=d+1$,] as the
  $d$-dimensional simplex easily embeds into $\R^d$. Consequently, the
  bound of $d+1$ is best possible under the assumptions of
  Corollary~\ref{c:helly_hom_radon}.
\item[$k=d-1$ and $n=d+2$,] as we can first embed the $(d-1)$-skeleton
  of the $d$-simplex linearly, then add an extra vertex at the
  barycentre of the vertices of that simplex and embed the remaining
  faces linearly. This implies that if we relax the condition of
  Corollary~\ref{c:helly_hom_radon} by only controlling the first $d-2$
  Betti numbers then the bound of $d+1$ becomes false.  It also
  implies that the bound of $d+2$ is best possible under (a
  strengthening of) the assumptions of
  Corollary~\ref{c:helly_d/2-connected}.
\end{description}
(Recall that, as explained in Example~\ref{ex:necessary}, the $\lceil
d/2\rceil - 1$ in the assumptions of
Corollary~\ref{c:helly_d/2-connected} cannot be reduced without
allowing unbounded Helly numbers.)
\end{remark}

\paragraph{Constrained chain map.}

Let us formalize the technique illustrated by the previous example. We
focus on the homological setting, as this is what we use to prove
Theorem~\ref{t:main}, but this can be easily transposed to homotopy.

  Considering a slightly more general situation, we let
  $\F=\{U_1,U_2, \ldots, U_n\}$ denote a family of subsets of some
  topological space $\Rspace$.  As before for any (possibly empty)
  proper subset $I$ of $[n] = \{1,2,\ldots, n\}$ we write $\U{I}$ for
  $\bigcap_{i \in [n]\setminus I} U_i$ and we put $\U{[n]} = \Rspace$.

Let $K$ be a simplicial complex and let $\gamma: C_*(K) \to C_*(\Rspace)$
be a chain map from the simplicial chains of $K$ to the singular
chains of $\Rspace$. We say that $\gamma$ is \emph{constrained by
  $(\F,\Phi)$} if:
\begin{itemize}
\item[(i)] $\Phi$ is a map from $K$ to $2^{[n]}$ such that $\Phi(\sigma
  \cap \tau) = \Phi(\sigma) \cap \Phi(\tau)$ for all $\sigma, \tau \in K$ and
  $\Phi(\emptyset)=\emptyset$.

\item[(ii)] For any simplex $\sigma \in K$, the support of
  $\gamma(\sigma)$ is contained in $\U{\Phi(\sigma)}$.
\end{itemize}
\begin{figure}
  \begin{center}
    \includegraphics{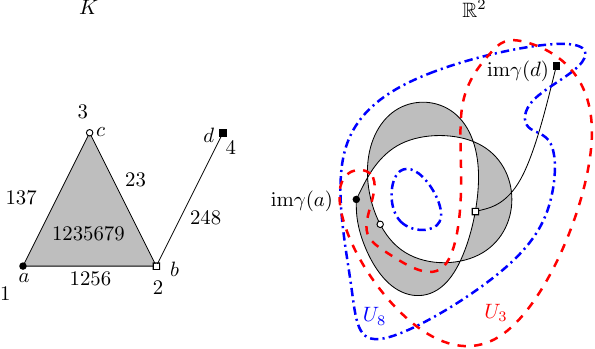}
    \caption{An example of a constrained map $\gamma\colon K \to \R^2$. A
      label at a face $\sigma$ of $K$ denotes $\Phi(\sigma)$. Note, for example,
    that the support of $\gamma(\{a,b,c\})$ needn't be a triangle since we work
  with chain maps. Constrains by $\Phi$ mean that a set $U_i$ must contain
cover images of all faces without label $i$. It is demonstrated by $U_3$ and
$U_8$ for example.}
\label{f:con}
  \end{center}
\end{figure}
See Figure~\ref{f:con}. We also say that a chain map $\gamma$ from $K$
is \emph{constrained by $\F$} if there exists a map $\Phi$ such that
$\gamma$ is constrained by $(\F,\Phi)$. In the above constructions, we
simply set $\Phi$ to be the identity. As we already saw, constrained
chain maps relate Helly numbers to homological almost-embeddings (see
Definition~\ref{d:homrep}) via the following observation:

\begin{lemma}\label{l:homrep}
  Let $\gamma: C_*(K) \to C_*(\Rspace)$ be a nontrivial chain map
  constrained by $\F$. If $\bigcap\F =\emptyset$ then $\gamma$ is a
  homological almost-embedding of $K$.
\end{lemma}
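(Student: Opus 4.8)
The plan is to unwind the definitions. We are given a nontrivial chain map $\gamma\colon C_*(K)\to C_*(\Rspace)$ constrained by $\F$, so by definition there is a map $\Phi\colon K\to 2^{[n]}$ satisfying condition (i) (in particular $\Phi(\emptyset)=\emptyset$ and $\Phi$ respects intersections) and condition (ii) (the support of $\gamma(\sigma)$ lies in $\U{\Phi(\sigma)}$ for every simplex $\sigma$). Since $\gamma$ is assumed nontrivial, the only remaining thing to check in order to conclude that $\gamma$ is a homological almost-embedding of $K$ is the disjointness-of-supports condition from Definition~\ref{d:homrep}(ii): whenever $\sigma,\tau\in K$ are vertex-disjoint, the supports of $\gamma(\sigma)$ and $\gamma(\tau)$ are disjoint.

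So I would fix disjoint simplices $\sigma,\tau\in K$, meaning $\sigma\cap\tau=\emptyset$ as a simplex (the empty simplex). By condition (i), $\Phi(\sigma)\cap\Phi(\tau)=\Phi(\sigma\cap\tau)=\Phi(\emptyset)=\emptyset$. Now I would compare the sets $\U{\Phi(\sigma)}$ and $\U{\Phi(\tau)}$. Recall $\U{I}=\bigcap_{i\in[n]\setminus I}U_i$. Since $\Phi(\sigma)$ and $\Phi(\tau)$ are disjoint subsets of $[n]$, the union of their complements is all of $[n]$, hence $\U{\Phi(\sigma)}\cap\U{\Phi(\tau)}=\bigcap_{i\in[n]\setminus\Phi(\sigma)}U_i\cap\bigcap_{i\in[n]\setminus\Phi(\tau)}U_i=\bigcap_{i\in[n]}U_i=\bigcap\F$, which is empty by hypothesis. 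By condition (ii), the support of $\gamma(\sigma)$ is contained in $\U{\Phi(\sigma)}$ and the support of $\gamma(\tau)$ is contained in $\U{\Phi(\tau)}$; since these two sets are disjoint, so are the two supports. This is exactly the condition needed, and combined with nontriviality it shows $\gamma$ is a homological almost-embedding of $K$.

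I expect there to be essentially no obstacle here: the lemma is a direct bookkeeping consequence of the definitions, and the only mildly delicate point is the set-theoretic identity $\U{\Phi(\sigma)}\cap\U{\Phi(\tau)}=\bigcap\F$ when $\Phi(\sigma)\cap\Phi(\tau)=\emptyset$, which follows since the complements $[n]\setminus\Phi(\sigma)$ and $[n]\setminus\Phi(\tau)$ then cover $[n]$. One should also remember the edge cases $\sigma=\emptyset$ or $\tau=\emptyset$: then $\gamma(\emptyset)=0$ (its support is empty) and the disjointness condition is vacuous, so nothing extra is needed. I would write the proof in two or three lines along exactly these lines.
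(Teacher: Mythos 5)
Your proposal is correct and follows essentially the same route as the paper: use nontriviality, then for disjoint $\sigma,\tau$ apply condition (ii) together with the identity $\U{\Phi(\sigma)}\cap\U{\Phi(\tau)}=\U{\Phi(\sigma)\cap\Phi(\tau)}=\U{\emptyset}=\bigcap\F=\emptyset$, which you justify by the equivalent complement-covering observation. Nothing is missing.
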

\begin{proof}
  Let $\Phi:K \to 2^{[n]}$ be such that $\gamma$ is constrained by
  $(\F,\Phi)$. Since $\gamma$ is nontrivial, it remains to check that
  disjoint simplices are mapped to chains with disjoint support. Let
  $\sigma$ and $\tau$ be two disjoint simplices of $K$. The supports
  of $\gamma(\sigma)$ and $\gamma(\tau)$ are contained, respectively,
  in $\U{\Phi(\sigma)}$ and $\U{\Phi(\tau)}$, and
  \[\U{\Phi(\sigma)} \cap \U{\Phi(\tau)} = \U{\Phi(\sigma) \cap
    \Phi(\tau)} = \U{\Phi(\sigma \cap \tau)}= \U{\Phi(\emptyset)} =
  \U{\emptyset} = \bigcap\F.\]
  Therefore, if $\bigcap\F =\emptyset$ then $\gamma$ is a homological
  almost-embedding of $K$.
\end{proof}

\subsection{Relaxing the connectivity assumption}\label{s:mat}
\label{ss:matousek}

In all the examples listed so far, the intersections $\bigcap\G$ must
be connected. Matou\v{s}ek~\cite{m-httucs-97} relaxed this condition
into ``having a bounded number of connected components'', the
assumptions then being on the topology of the components, by using
Ramsey's theorem. The gist of our proof is to extend his idea to allow
a bounded number of homology classes not only in the first dimension
but in \emph{any} dimension. Let us illustrate how Matou\v{s}ek's idea
works in two dimension:
\begin{theorem}[{\cite[Theorem 2 with $d = 2$]{m-httucs-97}}]
  For every positive integer $b$ there is an integer $h(b)$ with the
  following property.  If $\F$ is a finite family of subsets of $\R^2$
  such that the intersection of any subfamily has at most $b$
  path-connected components, then the Helly number of $\F$ is at most
  $h(b)$.
\end{theorem}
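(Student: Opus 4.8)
The plan is to assume that $\F=\{U_1,\dots,U_n\}$ has empty intersection while every proper subfamily has nonempty intersection, and to bound $n$ in terms of $b$. As in the excerpt, for a proper subset $I\subsetneq[n]$ write $\U{I}=\bigcap_{k\in[n]\setminus I}U_k$; thus $\U{\{i\}}$ is the intersection of $n-1$ members of $\F$ and is nonempty with at most $b$ path-components, and $\U{\{i,j\}}$ is an intersection of $n-2$ members and hence also has at most $b$ path-components. My goal is to construct, for some $m\ge 5$ (we may clearly assume $n\ge 5$), a nontrivial chain map $\gamma\colon C_*(\skelsim{1}{m-1})\to C_*(\R^2)$ that is constrained by $\F$. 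Given such a $\gamma$, Lemma~\ref{l:homrep} makes it a homological almost-embedding of $\skelsim{1}{m-1}$, hence --- restricting to any five of its vertices --- of $\skelsim{1}{4}=K_5$ into $\R^2$; this contradicts Corollary~\ref{c:nohomrep} in the case $k=1$, and the contradiction bounds $n$.

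Since $\skelsim{1}{m-1}$ has only vertices and edges, I would look for $\gamma$ with constraint map $\Phi$ equal to the inclusion of an $m$-element index set $\{i_1,\dots,i_m\}\subseteq[n]$, i.e.\ $\Phi(\{r\})=\{i_r\}$ and $\Phi(\{r,s\})=\{i_r,i_s\}$; one checks at once that this $\Phi$ satisfies the compatibility conditions in the definition of a constrained chain map. Then specifying $\gamma$ amounts to choosing a point $p_{i_r}\in\U{\{i_r\}}$ for each vertex and a singular $1$-chain $\gamma(\{r,s\})$ supported in $\U{\{i_r,i_s\}}$ with $\partial\gamma(\{r,s\})=p_{i_r}+p_{i_s}$ for each edge; such a $1$-chain exists precisely when $p_{i_r}$ and $p_{i_s}$ lie in a common path-component of $\U{\{i_r,i_s\}}$. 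So the problem reduces to a combinatorial selection task: find $m\ge 5$ indices and a point in $\U{\{i\}}$ for each of them so that every pair of chosen points is co-componental in the corresponding pairwise intersection. The obstruction is that a priori nothing forces two such points to be co-componental, and this is where Matou\v{s}ek's use of Ramsey's theorem enters. First color each $i\in[n]$ by the number $\beta_i\le b$ of path-components of $\U{\{i\}}$ and, by pigeonhole, restrict to a large index set on which $\beta_i$ equals a fixed value $\beta$; fix an arbitrary enumeration $C_{i,1},\dots,C_{i,\beta}$ of the components of each $\U{\{i\}}$. Then color each pair $i<j$ by the ``merging pattern'' $\rho_{ij}:=\{(t,t'):C_{i,t}\text{ and }C_{j,t'}\text{ lie in the same component of }\U{\{i,j\}}\}\subseteq[\beta]^2$; there are at most $2^{b^2}$ colors, so by Ramsey's theorem there is an arbitrarily large index set $I$ on which this pattern equals a single fixed relation $\rho$.

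If $\rho$ contains a diagonal pair $(t_0,t_0)$, the argument closes: take the $m=|I|$ vertices to be indexed by $I$ and set $p_i$ to be any point of $C_{i,t_0}$; then every pair is co-componental, $\gamma$ exists, and for $m\ge 5$ we reach the contradiction above. The hard part --- and the real content of the argument --- is the remaining case, where the uniform relation $\rho$ has no diagonal element (the Ramsey reduction alone does not preclude this; e.g.\ $\rho$ could be a $3$-cycle on $[\beta]=[3]$, and then there is provably no choice of one component per index making even three of them pairwise co-componental). To handle it I would induct on $b$, the base case $b=1$ being the connected case, where Corollary~\ref{c:helly_d/2-connected} already gives Helly number at most $d+2=4$. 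In the inductive step, the absence of a consistent component-selection should be leveraged to produce, on a still-large sub-index-set $I'\subseteq I$, an auxiliary family $\mathcal{W}=\{W_i:i\in I'\}$ of subsets of $\R^2$ --- obtained roughly by stripping from each relevant intersection the ``strand'' of components singled out by $\rho$ --- such that $\mathcal{W}$ still has empty intersection with all proper subfamilies nonempty, but now every sub-intersection has at most $b-1$ path-components; the inductive bound $h(b-1)$ then forces $|I'|$, and hence $n$, to be bounded. I expect the main difficulty to lie exactly here: defining $\mathcal{W}$ so that its sub-intersections are genuinely nonempty, have the claimed number of components, and are reached from $\F$ with only a Ramsey-controlled loss in the size of the index set.
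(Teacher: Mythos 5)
Your setup (reduce to a Helly-critical family, build a nontrivial chain map from the $1$-skeleton of a simplex constrained by $\F$, and contradict the non-almost-embeddability of $K_5$) matches the paper's strategy, but your proof has a genuine gap at exactly the step you flag. By taking $\Phi$ to be the inclusion, you force each edge $\{r,s\}$ to be carried by a chain supported in the \emph{pairwise} intersection $\U{\{i_r,i_s\}}$, which requires choosing one point per index so that every pair is co-componental in the corresponding pairwise intersection. Your Ramsey step on ``merging patterns'' only settles this when the uniform relation $\rho$ meets the diagonal; in the diagonal-free case you propose an induction on $b$ via an auxiliary family $\mathcal{W}$ obtained by ``stripping strands of components,'' but this family is never defined, and none of the needed properties (emptiness of $\bigcap\mathcal{W}$, nonemptiness of proper sub-intersections, the bound $b-1$ on component counts, Ramsey-controlled loss of indices) is established or even plausibly sketched. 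As it stands the argument proves nothing beyond the case where a diagonal element exists, and you yourself identify the remaining case as the ``real content''; that content is missing.

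The paper's proof avoids this difficulty entirely by \emph{not} demanding that edges live in pairwise intersections. It picks $p_i\in\U{\{i\}}$ and observes that for any $(b+1)$-element index set $S$, all the corresponding points lie in $\U{S}$, which has at most $b$ components, so by pigeonhole \emph{some} pair of them can be joined inside $\U{S}$ (a much larger set than the pairwise intersection). Ramsey's theorem for $(b+1)$-uniform hypergraphs, with colors in $\binom{[b+1]}{2}$ recording \emph{which positions} of $S$ get connected, yields a large set $T$ and a fixed position pair $\{k,\ell\}$; then the selection trick (Lemma~\ref{l:rescale}, with $t=10b-5$) produces five indices $i_1,\dots,i_5$ and $(b+1)$-sets $Q_{u,v}\subseteq T$ with $i_u,i_v$ in positions $k,\ell$ of $Q_{u,v}$ and $Q_{u,v}\cap Q_{u',v'}=\{i_u,i_v\}\cap\{i_{u'},i_{v'}\}$. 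Drawing the edge $\{i_u,i_v\}$ inside $\U{Q_{u,v}}$ gives a drawing of $K_5$; two disjoint edges must cross, and the crossing point lies in $\U{Q_{u,v}}\cap\U{Q_{u',v'}}=\U{\emptyset}=\bigcap\F=\emptyset$, a contradiction. In the language of constrained chain maps this amounts to taking $\Phi(\{u,v\})=Q_{u,v}$ rather than the inclusion --- the ``dummy'' indices are what make the connectivity available while the disjointness condition on the $Q_{u,v}$ preserves the intersection identity. If you want to salvage your write-up, replace your pairwise merging-pattern coloring by this $(b+1)$-tuple pigeonhole-plus-Ramsey argument and the selection lemma; the diagonal-free case then never arises.
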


Let us fix $b$ from above and assume that for any
subfamily $\G \subsetneq \F$ the intersection $\bigcap \G$ consists of
at most $b$ path-connected components and that $\bigcap \F = \emptyset$.
We start, as before, by picking
for every $i \in [n]$, a point $p_i$ in $\U{\{i\}}$. This is possible
as every intersection of $n-1$ members of $\F$ is nonempty. Now, if
we consider some pair of indices $i,j \in [n]$, the points $p_i$ and
$p_j$ are still in $\U{\{i,j\}}$ but may lie in different connected
components. It may thus not be possible to connect $p_i$ to $p_j$
\emph{inside} $\U{\{i,j\}}$. If we, however, consider $b+1$ indices
$i_1, i_2, \ldots, i_{b+1}$ then all the points $p_{i_1}, p_{i_2},
\ldots, p_{i_{b+1}}$ are in $\U{\{i_1, i_2, \ldots, i_{b+1}\}}$ which has
at most $b$ connected components, so at least one pair among of these
points can be connected by a path inside $\U{\{i_1, i_2, \ldots,
  i_{b+1}\}}$.  Thus, while we may not get a drawing of the complete
graph on $n$ vertices we can still draw many edges.

To find many vertices among which every pair can be connected we will
use the hypergraph version of the classical theorem of Ramsey:


\begin{theorem}[Ramsey~\cite{ramsey30}]
  \label{t:Ramsey} 
  For any $x$, $y$ and $z$ there is an integer $R_x(y,z)$ such that
  any $x$-uniform hypergraph on at least $R_x(y,z)$ vertices colored
  with at most $y$ colors contains a subset of $z$ vertices inducing a
  monochromatic sub-hypergraph.
\end{theorem}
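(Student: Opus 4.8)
The plan is to prove this by induction on the uniformity $x$; this is the classical Erd\H{o}s--Rado argument, and only the existence of a finite $R_x(y,z)$ --- not a good bound --- is needed for the applications in this paper. Here ``colored'' means that the $x$-subsets (hyperedges) of the complete $x$-uniform hypergraph on the vertex set carry one of $y$ colors, and ``monochromatic'' means that all $x$-subsets of the chosen $z$ vertices receive the same color. For the base case $x=1$ the statement is just the pigeonhole principle: among $y(z-1)+1$ vertices colored with $y$ colors some color class has size at least $z$, so $R_1(y,z):=y(z-1)+1$ works.

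For the inductive step I would assume $R_{x-1}(y,\cdot)$ exists, fix a $y$-coloring $c$ of the $x$-subsets of a vertex set $V$ with $|V|=n$, and greedily extract a sequence of distinct vertices $a_1,a_2,\dots$ together with a nested chain of ``reservoirs'' $V=B_0\supseteq B_1\supseteq\cdots$ with $a_i\in B_{i-1}$ and $B_i\subseteq B_{i-1}\setminus\{a_i\}$, maintaining the invariant that for every $(x-1)$-subset $T\subseteq\{a_1,\dots,a_i\}$ the color $c(T\cup\{b\})$ is the same, say $\hat c(T)$, for all $b\in B_i$. To pass from step $i-1$ to step $i$ one picks $a_i\in B_{i-1}$ arbitrarily; the only new constraints concern the $\binom{i-1}{x-2}$ subsets $T'\cup\{a_i\}$ with $T'$ an $(x-2)$-subset of $\{a_1,\dots,a_{i-1}\}$, and since for each such $T'$ the assignment $b\mapsto c(T'\cup\{a_i,b\})$ is a $y$-coloring of $B_{i-1}\setminus\{a_i\}$, one shrinks the reservoir successively to a sub-reservoir on which each of these is constant, keeping at least a $1/y$-fraction each time, so $|B_i|\geq(|B_{i-1}|-1)/y^{\binom{i-1}{x-2}}$. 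Unwinding this size recursion gives an explicit threshold $n_0(x,y,t)$ such that $n\geq n_0(x,y,t)$ guarantees the construction runs for at least $t$ steps.

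The point of the invariant is that the color of any $x$-subset $\{a_{j_1},\dots,a_{j_x}\}$ with $j_1<\cdots<j_x$ equals $\hat c(\{a_{j_1},\dots,a_{j_{x-1}}\})$, since $a_{j_x}\in B_{j_x-1}$ and the invariant at step $j_x-1$ applies; so the color of an $x$-edge among $a_1,\dots,a_t$ depends only on its $x-1$ smallest vertices. Therefore $\hat c$ is a $y$-coloring of the $(x-1)$-subsets of $\{a_1,\dots,a_t\}$, and choosing $t:=R_{x-1}(y,z)$ the induction hypothesis yields $z$ indices $i_1<\cdots<i_z$ with all $(x-1)$-subsets of $\{a_{i_1},\dots,a_{i_z}\}$ of a single color; then every $x$-subset of $\{a_{i_1},\dots,a_{i_z}\}$ has that color, and $R_x(y,z):=n_0\bigl(x,y,R_{x-1}(y,z)\bigr)$ closes the induction.

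The only step where I expect to have to be careful rather than routine is the bookkeeping of the vertex order: the entire reduction to lower uniformity rests on charging each $x$-edge to the $(x-1)$-edge formed by its $x-1$ smallest vertices, which is exactly what keeps the invariant at each stage confined to subsets of the already-built prefix $\{a_1,\dots,a_i\}$ together with a single fresh reservoir vertex. Everything else --- in particular the (tower-type) explicit bound obtained by solving the reservoir recursion --- is routine and, as noted, not needed here.
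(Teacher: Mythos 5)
Your proof is correct: it is the standard induction-on-uniformity argument (pigeonhole for $x=1$, then the greedy ``reservoir'' refinement in which the colour of an $x$-edge is charged to its $x-1$ earliest vertices, reducing to $R_{x-1}(y,z)$), and all the delicate points --- well-definedness of the induced colouring $\hat c$, the nesting of the reservoirs, and the size recursion guaranteeing $t=R_{x-1}(y,z)$ extraction steps --- are handled properly. Note that the paper itself offers no proof of this statement; it is quoted as Ramsey's classical theorem and used as a black box, so your argument simply supplies the canonical textbook proof, which is all that is needed here.
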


\noindent
From the discussion above, for any $b+1$ indices $i_1 < i_2 < \ldots < i_{b+1}$ there exists a
pair $\{k,\ell\} \in \binom{[b+1]}{2}$ such that $p_{i_k}$ and
$p_{i_\ell}$ can be connected inside $\U{\{i_1, i_2, \ldots,
  i_{b+1}\}}$. Let us consider the $(b+1)$-uniform hypergraph on $[n]$
and color every set of indices $i_1 < i_2 < \ldots < i_{b+1}$ by one of
the pairs in $\binom{[b+1]}{2}$ that can be connected inside
$\U{\{i_1, i_2, \ldots, i_{b+1}\}}$ (if more than one pair can be
connected, we pick one arbitrarily). Let $t$ be some integer to be
fixed later. By Ramsey's theorem, if $n \ge
R_{b+1}\pth{\binom{b+1}2,t}$ then there exist a pair $\{k,\ell\} \in
\binom{[b+1]}{2}$ and a subset $T \subseteq [n]$ of size $t$ with the
following property: for any $(b+1)$-element subset $S \subset T$, the
points whose indices are the $k$th and $\ell$th indices of $S$ can be
connected inside $\U{S}$.

Now, let us set $t=5+\binom52(b-1) = 10b-5$. We claim that we can find
five indices in $T$, denoted $i_1, i_2, \ldots, i_5$, and, for each
pair $\{i_u,i_v\}$ among these five indices, some $(b+1)$-element
subset $Q_{u,v} \subset T$ with the following properties:
\begin{itemize}
\item [(i)] $i_u$ and $i_v$ are precisely in the $k$th and $\ell$th
  position in $Q_{u,v}$, and
\item [(ii)] for any $1 \le u,v,u',v' \le 5$, \quad $Q_{u,v} \cap
  Q_{u',v'} = \{i_u,i_v\} \cap \{i_{u'},i_{v'}\}$.
\end{itemize}
We first conclude the argument, assuming that we can obtain such indices and
sets. Observe that from the construction of $T$, the
$i_u$'s and the $Q_{u,v}$'s we have the following property: for any
$u,v \in [5]$, we can connect $p_{i_u}$ and $p_{i_v}$ inside
$\U{Q_{u,v}}$. This gives a drawing of $K_5$ in the plane. Since $K_5$
is not planar, there exist two edges with no vertex in common, say
$\{u,v\}$ and $\{u',v'\}$, that cross. This intersection point must
lie in
\[ \U{Q_{u,v}} \cap \U{Q_{u',v'}} = \U{Q_{u,v} \cap Q_{u',v'}} = \U{\{i_u,i_v\} \cap \{i_{u'},i_{v'}\}} = \U{\emptyset} = \bigcap \F = \emptyset,\]
a contradiction. 
 Hence the assumption that $n \ge
R_{b+1}\pth{\binom{b+1}2,t}$ is false and $\F$ has cardinality at most
$R_{b+1}\pth{\binom{b+1}2,10b-5}-1$, which is our $h(b)$.

\paragraph{The selection trick.}

It remains to derive the existence of the $i_u$'s and the $Q_{u,v}$'s.
It is perhaps better to demonstrate the method by a simple example to
develop some intuition before we formalize it.

\smallskip
\noindent
\emph{Example.}  Let us fix $b = 4$ and $\{ k,\ell \} = \{2,3\} \in
\binom{[4 + 1]}2$. We first make a `blueprint' for the construction
inside the rational numbers. For any two indices $u, v \in [5]$ we
form a totally ordered set $Q'_{u,v} \subseteq \Q$ of size $b + 1 = 5$
by adding three rational numbers (different from $1,\dots,5$) to the
set $\{u, v\}$ in such a way that $u$ appears at the second and $v$ at
the third position of $Q'_{u,v}$. For example, we can set $Q'_{1,4}$
to be $\{0.5; 1; 4; 4.7; 5.13\}$. Apart from this we require that we
add a different set of rational numbers for each $\{u,v\}$. Thus
$Q'_{u,v} \cap Q'_{u',v'} = \{u, v\} \cap \{u', v'\}$. Our blueprint
now appears inside the set $T' := \bigcup_{1 \leq u < v \leq 5}
Q'_{u,v}$; note that both this set $T'$ and the set $T$ in which we
search for the sets $Q_{u,v}$ have $35$ elements.  To obtain the
required indices $i_{u}$ and sets $Q_{u,v}$ it remains to consider the
unique strictly increasing bijection $\pi_0\colon T' \to T$ and set
$i_u := \pi_0(u)$ and $Q_{u,v} := \pi_0(Q'_{u,v})$.



\smallskip
\noindent
\emph{The general case.} Let us now formalize the generalization of
this trick that we will use to prove Theorem~\ref{t:main}.
Let $Q$ be a subset of $[w]$.  If $e_1 < e_2 < \ldots < e_w$ are the
elements of a totally ordered set $W$ then we call $\{e_i : i \in Q\}$
the \emph{subset selected by $Q$ in $W$}.

\begin{lemma}\label{l:rescale}
  Let $1 \le q \le w$ be integers and let $Q$ be a subset of $[w]$ of
  size $q$. Let $Y$ and $Z$ be two finite totally ordered sets and let
  $A_1,A_2, \ldots, A_r$ be $q$-element subsets of $Y$.  If $|Z| \ge
  |Y| + r(w-q)$, then there exist an injection $\pi: Y \to Z$ and $r$
  subsets $W_1, W_2, \ldots, W_r \in \binom{Z}{w}$ such that for every
  $i \in [r]$, $Q$ selects $\pi(A_i)$ in $W_i$. We can further require
  that $W_i \cap W_j = \pi(A_i \cap A_j)$ for any two $i, j \in [r]$, $i \neq
  j$.
\end{lemma}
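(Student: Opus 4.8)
The plan is to mimic the ``blueprint'' construction from the preceding example, but carried out abstractly. First I would introduce a fresh totally ordered ``scratch'' set in which to build the sets $W_i$ symbolically, then transport everything into $Z$ by the unique order-preserving injection. Concretely: write $Y = \{y_1 < y_2 < \dots < y_m\}$ where $m = |Y|$. For each $i \in [r]$ I need to enlarge $A_i$ to a $w$-element set $W_i'$ in which the positions prescribed by $Q$ are occupied exactly by the elements of $A_i$; this requires inserting $w - q$ new elements, and to guarantee the disjointness condition $W_i' \cap W_j' = A_i \cap A_j$ these inserted elements must be private to $W_i'$. So I would take $r(w-q)$ brand-new symbols, partition them into $r$ blocks of size $w-q$, one block $B_i$ per index $i$, and place the set $Y \cup \bigcup_{i} B_i$ into a total order that extends the order on $Y$ and slots each new symbol of $B_i$ into whatever ``gap'' of the real line is needed so that, inside $W_i' := A_i \cup B_i$, the elements occupy precisely the positions dictated by $Q$. (This is possible because $Q$ only constrains the \emph{relative} order of the $q$ old and $w-q$ new elements of $W_i'$, and one can always realize a prescribed relative order by choosing the new elements densely enough between consecutive elements of $A_i$ or below/above all of them; since the $B_i$ are disjoint, the choices for different $i$ do not interfere.)

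Next, observe that the scratch set $T' := Y \cup \bigcup_{i\in[r]} B_i$ has cardinality $|Y| + r(w-q) \le |Z|$. Let $\pi \colon T' \to Z$ be the unique strictly increasing injection (pick out the $|T'|$ smallest elements of $Z$, in order), and restrict it to $Y$ to get the required injection $\pi \colon Y \to Z$; set $W_i := \pi(W_i')$. Because $\pi$ is order-preserving and injective, it carries ``the subset selected by $Q$'' to ``the subset selected by $Q$'': if $Q$ selects $A_i$ in $W_i'$ then $Q$ selects $\pi(A_i)$ in $\pi(W_i') = W_i$. Likewise, injectivity gives $W_i \cap W_j = \pi(W_i') \cap \pi(W_j') = \pi(W_i' \cap W_j') = \pi(A_i \cap A_j)$, using $W_i' \cap W_j' = A_i \cap A_j$, which holds because the $B_i$ are pairwise disjoint and disjoint from $Y$. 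This finishes the proof.

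I expect the only genuinely fiddly point to be making precise the claim that the new elements of $B_i$ can always be inserted into a total order so that $W_i' = A_i \cup B_i$ realizes the position pattern prescribed by $Q$ --- i.e., that for \emph{any} $q$-subset $Q \subseteq [w]$ and \emph{any} $q$-element set $A_i$ (sitting inside an already-fixed ambient order on $Y$) one can choose the $w-q$ extra elements to land in exactly the complementary positions. This is intuitively clear (``there is always room between, below, or above'' the elements of $A_i$, and one may even take the ambient order to be a suitable subset of $\Q$ or of a large enough finite chain, exactly as in the worked example with $Q = \{2,3\}$), but stating it cleanly requires a small bookkeeping argument about interleaving a fixed sequence with a new one in a prescribed pattern. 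Everything after that is a routine consequence of $\pi$ being an order isomorphism onto its image.
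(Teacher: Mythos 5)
Your proposal is correct and follows essentially the same route as the paper: the paper also builds a scratch copy (it takes $\pi_0(Y)=[|Y|]$ together with $r$ pairwise disjoint sets $D_i$ of $w-q$ rationals chosen so that $Q$ selects $\pi_0(A_i)$ in $D_i\cup\pi_0(A_i)$) and then transports everything into $Z$ by a strictly increasing map, exactly as you do with your blocks $B_i$ and the order-preserving injection. The ``fiddly point'' you flag is handled in the paper simply by working inside $\Q$, where density makes the insertion of the dummy elements immediate.
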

\begin{proof}
  Let $\pi_0$ denote the monotone bijection between $Y$ and $[|Y|]$.
  For $i \in [r]$ we let $D_i$ denote a set of $w-q$ rationals,
  disjoint from $[|Y|]$, such that $Q$ selects $\pi_0(A_i)$ in $D_i
  \cup \pi_0(A_i)$. We further require that the $D_i$ are pairwise
  disjoint, and put $Z' = [|Y|] \cup \pth{\bigcup_{i \in [r]}D_i}$.
  Since $|Z| \ge |Y|+r(w-q)= |Z'|$ there exists a strictly increasing
  map $\nu: Z' \to Z$. We set $\pi := \nu \circ \pi_0$ and
  $W_i := \nu(D_i \cup \pi_0(A_i)) \in \binom{Z}{w}$. The
  desired condition is satisfied by this choice. See Figure~\ref{f:rational}.
\end{proof}

\begin{figure}
  \begin{center}
  \includegraphics{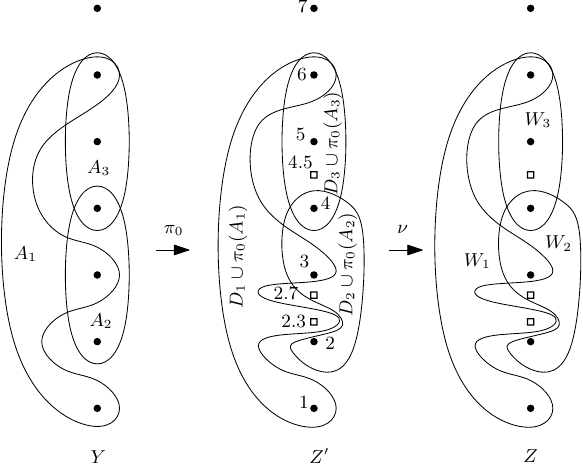}
  \caption{Illustration for the proof of Lemma~\ref{l:rescale}. We assume that
  $w = 4$ and $Q = \{1,3,4\}$.}
  \label{f:rational}
  \end{center}
\end{figure}



\section{Constrained chain maps and Helly number}\label{s:ccm}


We now generalize the technique presented in Section~\ref{s:history}
to obtain Helly-type theorems from non-embeddability results.  We will
construct constrained chain maps for arbitrary complexes.  As above,
$\F=\{U_1,U_2, \ldots, U_n\}$ denotes a family of subsets of some
topological space $\Rspace$ and for $I \subseteq [n]$ we keep the
notation $\U{I}$ as used in the previous section (see the beginning of
Subsection~\ref{ss:hom_aspts}). Note that
  although so far we only used the \emph{reduced} Betti numbers
  $\tilde{\beta}$, in this section it will be convenient to work with
  \emph{standard} (non-reduced) Betti numbers $\beta$, starting with the following proposition.

\begin{proposition}\label{p:ccm}
  For any finite simplicial complex $K$ and non-negative integer $b$
  there exists a constant $h_K(b)$ such that the following holds. For
  any finite family $\F$ of at least $h_K(b)$ subsets of a topological
  space $\Rspace$ such that $\bigcap\G \neq \emptyset$ and
  $\beta_i\pth{\cap\G} \le b$ for any $\G \subsetneq \F$ and any $0\le
  i<\dim K$, there exists a nontrivial chain map $\gamma: C_*(K) \to
  C_*(\Rspace)$ that is constrained by $\F$.
\end{proposition}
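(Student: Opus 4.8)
The plan is to construct the chain map $\gamma$ by induction on the skeleta of $K$, exactly as in the warm-up construction of Section~\ref{s:history}, but using Ramsey's theorem at each dimension to cope with the fact that intersections are no longer connected or acyclic, merely having bounded Betti numbers. I will carry along, in addition to $\gamma$ restricted to $\skel{\ell}{K}$, a large ``clean'' index set $T_\ell \subseteq [n]$ and a map $\Phi_\ell$ assigning to each simplex $\sigma$ of $\skel{\ell}{K}$ a subset of $T_\ell$ that witnesses where the support of $\gamma(\sigma)$ lives (i.e.\ $\mathrm{supp}(\gamma(\sigma)) \subseteq \U{\Phi_\ell(\sigma)}$), with the intersection property $\Phi_\ell(\sigma \cap \tau) = \Phi_\ell(\sigma) \cap \Phi_\ell(\tau)$ and $\Phi_\ell(\emptyset) = \emptyset$. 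The point is to make sure that, having built $\gamma$ on $\skel{\ell}{K}$ over a large enough $T_\ell$, we can afford to pass to a still-large $T_{\ell+1} \subseteq T_\ell$ on which $\gamma$ extends to $\skel{\ell+1}{K}$; since $K$ is fixed and finite, there are only $\dim K + 1$ rounds, so starting from $|\F|$ large enough (this is how $h_K(b)$ is defined, by unwinding the Ramsey bounds from the top) we finish with $T_{\dim K}$ nonempty, which is all we need.

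For the base case $\ell = 0$: since every $(n-1)$-fold intersection is nonempty, for each $i \in [n]$ pick $p_i \in \U{\{i\}}$, set $\gamma(\{i\}) = p_i$, $\Phi_0(\{i\}) = \{i\}$, and $T_0 = [n]$. For the inductive step, suppose $\gamma$, $\Phi_\ell$, $T_\ell$ are in hand with $|T_\ell|$ huge. Fix an $(\ell+1)$-simplex $\sigma$ of $K$; for any $(\ell+1)$-element index set $S \subseteq T_\ell$, think of $\sigma$ as sitting on $S$ via the order isomorphism and consider the cycle $\gamma(\partial\sigma)$, whose support lies in $\U{\Phi_\ell(\partial\sigma)} \subseteq \U{S'}$ where $S' = \bigcup_{\tau \text{ facet of }\sigma}\Phi_\ell(\tau)$ is controlled by $S$. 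Because $\beta_\ell(\U{S'}) \le b$, if we look at $b+1$ such candidate fillings the corresponding homology classes in $H_\ell(\U{S'})$ are linearly dependent over $\Z_2$, hence \emph{some} nonempty subset of them sums to zero, i.e.\ the corresponding sum of boundary cycles is a boundary — this is the homological analogue of Matou\v sek's ``two of $b+1$ points lie in the same component''. Colour each sufficiently large index set by which subset of cycles becomes null-homologous (a bounded number of colours, since we work over the fixed ring $\Z_2$ — this is the role of Claim~\ref{c:uniform-i}), apply Ramsey (Theorem~\ref{t:Ramsey}) to extract a large monochromatic $T_{\ell+1}^{(0)} \subseteq T_\ell$, and then use the selection trick (Lemma~\ref{l:rescale}) to realise, simultaneously and coherently, a filling chain $c_\sigma$ for every $(\ell+1)$-simplex $\sigma$ of $K$ placed on $T_{\ell+1}$: the lemma lets us pick witness index sets $W_\sigma \subseteq T_{\ell+1}$ for each $\sigma$ with $W_\sigma \cap W_\tau = \Phi(\sigma \cap \tau)$, so the constraint intersection property is preserved. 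Repeating the Ramsey step once for each of the finitely many simplices of $K$ in dimension $\ell+1$ (or, more efficiently, packaging them) yields the extended $\gamma$, $\Phi_{\ell+1}$, and $T_{\ell+1}$; linearity extends $\gamma$ to all of $C_{\ell+1}(K)$, and non-triviality is preserved since $\gamma$ on vertices is unchanged.

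The main obstacle — and the place the argument genuinely goes beyond Matou\v sek's — is the bookkeeping in the inductive step: one must set up the colouring so that the ``which subfamily of $b+1$ candidate fillings is a boundary'' data is (a) invariant enough to be Ramsey-able (finitely many colours, which forces the fixed coefficient ring $\Z_2$), and (b) rich enough that the monochromatic conclusion, combined with the selection trick, simultaneously fills \emph{all} $(\ell+1)$-faces of $K$ while keeping the $\Phi$-intersection condition $\Phi(\sigma\cap\tau)=\Phi(\sigma)\cap\Phi(\tau)$. A secondary subtlety is that a filling chain of $\gamma(\partial\sigma)$ inside $\U{S'}$ is a \emph{singular} chain whose support one must be able to bound by $\U{\Phi(\sigma)}$ for an appropriately chosen $\Phi(\sigma) \supseteq \bigcup_\tau \Phi(\tau)$ — this is exactly what the selected witness set $W_\sigma$ from Lemma~\ref{l:rescale} provides. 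Once these are arranged, $h_K(b)$ is simply the result of composing the $\dim K$ successive Ramsey functions (which accounts for the enormous bound alluded to in the introduction), and Proposition~\ref{p:ccm} follows; combined with Lemma~\ref{l:homrep} and Corollary~\ref{c:nohomrep-odd} (applied to $K = \skelsim{\lceil d/2\rceil}{d+2}$, after replacing $\tilde\beta$ by $\beta$ at the cost of additive constants) this will give Theorem~\ref{t:main}.
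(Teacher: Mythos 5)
Your overall architecture (induction on the dimension, Ramsey's theorem to make the homological behaviour uniform, Lemma~\ref{l:rescale} to place the simplices of $K$ on disjoint ``dummy'' witness sets so that $\Phi(\sigma\cap\tau)=\Phi(\sigma)\cap\Phi(\tau)$) matches the paper's proof, but the heart of your inductive step does not work as stated. To extend the chain map over a $k$-simplex $\sigma$ of $K$ (for $k\ge 2$) you must exhibit \emph{one specific} $(k-1)$-cycle, namely the image of $\partial\sigma$ under the map you are building, that bounds in the appropriate $\U{W_\sigma}$. What the hypothesis $\beta_{k-1}\le b$ gives you, via your linear-dependence/pigeonhole argument, is only that among $b+1$ (or $2^b+1$) \emph{candidate placements} of $\partial\sigma$ some even-sized subcollection has null-homologous sum, or that two candidates are homologous to each other. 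That was enough in dimension $1$ because there the ``sum of two homologous $0$-cycles'' is itself exactly the boundary of the edge you get to draw; for $k\ge 2$ the sum of several candidate cycles is not the image of $\partial\sigma$ under any chain map extending the lower skeleton, so no filling chain $c_\sigma$ is produced. The paper's annulus example (a drawing of $\skelsim{1}{4}$ in which \emph{every} triangle boundary wraps around the hole) shows this is not a bookkeeping issue: all candidate cycles can lie in the same nontrivial class, so no single one bounds, and no amount of recolouring or reselecting indices repairs this.

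The missing idea is the paper's barycentric-subdivision trick. The induction hypothesis is applied not to $\skel{k-1}{K}$ but to the full $(k-1)$-skeleton of a huge simplex $\simplex{s}$, producing a constrained $\gamma'$; one then maps $K^{(k-1)}$ into it through the composition $\gamma'\circ\beta_\sharp\circ\alpha$, where $\alpha$ sends a simplex of $K$ to the sum of the top-dimensional simplices of its barycentric subdivision and $\beta$ injects $V(\sd K)$ into $V(\simplex{s})$ via Lemma~\ref{l:rescale}. Then $\alpha(\partial\sigma)$ is the sum of the boundaries of the $k!$ top-dimensional simplices of $\sd\sigma$, and the Ramsey step (Claim~\ref{c:uniform-i}) is set up so that all of these $k!$ cycles are homologous \emph{to each other} in $\U{\Psi(\kappa(\sigma))}$ --- not so that any of them bounds. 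Since $k!$ is even for $k\ge 2$ and coefficients are in $\Z_2$, their sum bounds, which is exactly what lets one define $\gamma(\sigma)$. Without this device (or some substitute for converting ``many mutually homologous cycles'' into ``a bounding cycle that is the image of $\partial\sigma$''), your induction cannot be completed. A minor further point: Theorem~\ref{t:main} is deduced from the case $K=\skelsim{\lceil d/2\rceil}{2\lceil d/2\rceil+2}$ via Corollary~\ref{c:nohomrep}; your citation of Corollary~\ref{c:nohomrep-odd} is harmless but not what is needed once the proposition is in hand.
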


\noindent
The case $K=\skelsim{k}{2k+2}$, with $k=\lceil d/2 \rceil$ and
$\Rspace=\R^d$, of Proposition~\ref{p:ccm} implies
Theorem~\ref{t:main}.


\begin{proof}[Proof of Theorem~\ref{t:main}]
  Let $b$ and $d$ be fixed integers, let $k=\lceil d/2 \rceil$ and let
  $K=\skelsim{k}{2k+2}$. Let $h_K(b + 1)$ denote the constant from
  Proposition~\ref{p:ccm} (we plug in $b + 1$ because we need to
  switch between reduced and non-reduced Betti numbers).  Let $\F$ be
  a finite family of subsets of $\R^d$ such that
  $\tilde\beta_i\pth{\bigcap \G} \le b$ for any $\G \subsetneq \F$ and
  every $0 \le i \le \dim K = \lceil d/2 \rceil-1$, in particular
  $\beta_i\pth{\bigcap \G} \le b + 1$ for such $\G$. Let $\F^*$ denote
  an inclusion-minimal sub-family of $\F$ with empty intersection:
  $\bigcap{\F^*} = \emptyset$ and $\bigcap(\F^*\setminus\{U\}) \neq
  \emptyset$ for any $U \in \F^*$.  If $\F^*$ has size at least
  $h_K(b+1)$, it satisfies the assumptions of Proposition~\ref{p:ccm}
  and there exists a nontrivial chain map from $K$ that is
  constrained by $\F^*$. Since $\F^*$ has empty intersection, this
  chain map is a homological almost-embedding by Lemma~\ref{l:homrep}.
  However, no such homological almost-embedding exists by
  Corollary~\ref{c:nohomrep}, 
  so $\F^*$ must have size at most
  $h_K(b+1)-1$. As a consequence, the Helly number of $\F$ is bounded
  and the statement of Theorem~\ref{t:main} holds with
  $h(b,d)=h_K(b+1)-1$.
\end{proof}

\noindent
The rest of this section is devoted to proving Proposition~\ref{p:ccm}.
We proceed by induction on the dimension of $K$, Section~\ref{s:k=0}
settling the case of $0$-dimensional complexes and
Section~\ref{s:induction} showing that if Proposition~\ref{p:ccm}
holds for all simplicial complexes of dimension $i$ then it also holds
for all simplicial complexes of dimension $i+1$. As the proof of the
induction step is quite technical, as a warm-up, we provide the reader
with a simplified argument for the induction step from $i=0$ to $i=1$
in Section~\ref{s:k=1}. We let $V(K)$ and $v(K)$ denote, respectively,
the set of vertices and the number of vertices of $K$.

\subsection{Initialization ($\dim K=0$)}\label{s:k=0}

If $K$ is a $0$-dimensional simplicial complex then
Proposition~\ref{p:ccm} holds with $h_K(b) = v(K)$. Indeed, consider a
family $\F$ of at least $v(K)$ subsets of $\Rspace$ such that all proper
subfamilies have nonempty intersection. We enumerate the vertices of
$K$ as $\{v_1, v_2, \ldots, v_{v(K)}\}$ and define
$\Phi(\{v_i\})=\{i\}$; in plain English, $\Phi$ is a bijection between
the set of vertices of $K$ and $\{1,2,\ldots, v(K)\}$. We first define
$\gamma$ on $K$ by mapping every vertex $v \in K$ to a point $p(v) \in
\U{\Phi(v)}$, then extend it linearly into a chain map $\gamma:C_0(K)
\to C_0(\Rspace)$. It is clear that $\gamma$ is nontrivial and constrained
by $(\F,\Phi)$, so Proposition~\ref{p:ccm} holds when $\dim K=0$.

\subsection{Principle of the induction mechanism ($\dim K=1$)}\label{s:k=1}


As a warm-up, we now prove Proposition~\ref{p:ccm} for $1$-dimensional
simplicial complexes. While this merely amounts to reformulating
Matou\v{s}ek's proof for embeddings~\cite{m-httucs-97} in the language
of chain maps, it still introduces several key ingredients of the
induction while avoiding some of its complications. To avoid further
technicalities, we use the non-reduced version of Betti numbers here.

\medskip

Let $K$ be a $1$-dimensional simplicial complex with vertices $\{v_1,
v_2, \ldots, v_{v(K)}\}$ and assume that $\F$ is a finite family of
subsets of a topological space $\Rspace$ such that for any $\G \subsetneq
\F$, $\bigcap\G \neq \emptyset$ and $\beta_0\pth{\cap\G} \le b$. Let
$s \in \N$ denote some parameter, to be fixed later. We assume that
the cardinality of $\F$ is large enough (as a function of $s$) so
that, as argued in Subsection~\ref{s:k=0}, there exist a bijection
$\Psi:\skelsim{0}{s} \to [s+1]$ and a nontrivial chain map
$\gamma':C_*(\skelsim{0}{s}) \to C_*(\Rspace)$ constrained by $(\F,\Psi)$.
We extend $\Psi$ to $\simplex{s}$ by putting
$\Psi(\sigma)=\cup_{v\in \sigma}\Psi(v)$ for any $\sigma \in
\simplex{s}$ and $\Psi(\emptyset) = \emptyset$. Remark that for any
$\sigma, \tau \in \simplex{s}$ we have $\Psi(\sigma \cap \tau) =
\Psi(\sigma) \cap \Psi(\tau)$.
%
%

\medskip

  We now look for an injection $f$ of $V(K)$ into
  $V(\simplex{s})$ such that the chain map $\gamma' \circ
  f_\sharp\colon C_*(K^{(0)}) \to C_*(\Rspace)$ can be extended into a
  chain map $\gamma: C_*(K) \to C_*(\Rspace)$ constrained by $\F$. Let
  $e=\{u,v\}$ be an edge in $K$. If we could arrange that
  $\gamma'(f(u)+f(v))$ is a boundary in $\U{\Psi(\{f(u),f(v)\})}$ then
  we could simply define $\gamma(e)$ to be a chain in
  $\U{\Psi(\{f(u),f(v)\})}$ bounded by $\gamma'(f(u)+f(v))$ (see
  Figure~\ref{f:injectK}). Unfortunately this is too much to ask for but
  we can still follow the Ramsey-based approach of
  Subsection~\ref{s:mat}: we add ``dummy'' vertices to
  $\{\Psi(\{f(u),f(v)\})\}$ to obtain a set $W_e$ such that
  $\gamma'(f(u)+f(v))$ is a boundary in $\U{W_e}$.  If we use
  different dummy vertices for distinct edges then setting $\gamma(e)$
  to be a chain in $\U{W_e}$ bounded by $\gamma'(f(u)+f(v))$ still
  yields a chain map constrained by $\F$. We spell out the details in
  four steps.

\begin{figure}
  \begin{center}
    \includegraphics{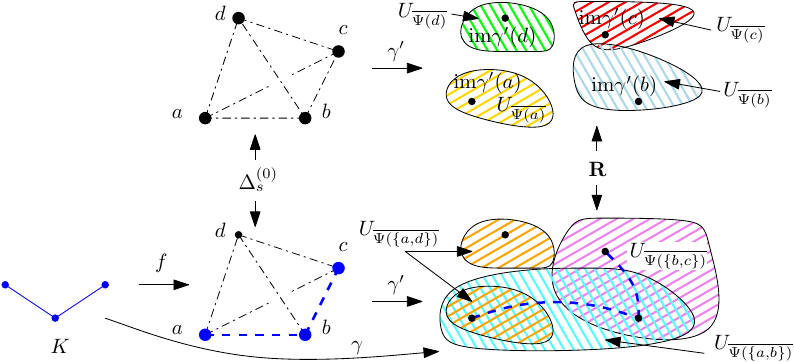}
    \caption{Injecting $V(K)$ into $V(\simplex{s})$ by $f$ in a
        way that the constrained chain map $\gamma'$ from
        $V(\simplex{s})$ (top) can give rise to a constrained chain
        map from $V(K)$ (bottom); for the sake of illustration we use
        maps instead of chain maps. The situation considered here is
        simple, for instance $\gamma'(a + b)$ is a boundary in
        $\U{\Psi(\{a,b\})}$ so $\gamma' \circ f_\sharp$ can be
        extended to the edge $\{f^{-1}(a), f^{-1}(b)\}$ of $K$. Note
        that if we wanted to use the edge $ad$, since $\gamma'(a+d)$
        is not a boundary in $\U{\Psi(\{a,d\})}$ we would need to add
        ``dummy'' elements to $\Psi(\{a,d\})$.}
    \label{f:injectK}
\end{center}
\end{figure}


%

\medskip

\begin{description}
\item[Step 1.] Any set $S$ of $2^b+1$ vertices of $\simplex{s}$
  contains two vertices $u_S,v_S \in S$ such that $\gamma'(u_S+v_S)$
  is a boundary in $\U{\Psi(S)}$.\footnote{We could require that
    $\gamma'$ sends every vertex to a point in $\U{\Psi(S)}$, i.e. is
    a chain map induced by a map, and simply argue that since
    $\U{\Psi(S)}$ has at most $b$ connected components, any $b+1$
    vertices of $\simplex{s}$ contains some pair that can be connected
    inside $\U{\Psi(S)}$. This argument does not, however, work in
    higher dimension. Since Section~\ref{s:k=1} is meant as an
    illustration of the general case, we choose to follow the general
    argument.} Indeed, notice first that for any $u \in S$, the
  support of $\gamma'(u)$ is contained in $\U{\Psi(S)}$. The
  assumption on $\F$ about bounded Betti numbers of intersections of
  subfamilies of $\F$ then ensures that there are at most $2^b$
  distinct elements
  in $H_0(\U{\Psi(S)})$, as $H_0(\U{\Psi(S)}) \simeq \Z_2^m$ for some $m
  \leq b$. Thus, there are two
  vertices $u_S,v_S \in S$ such that $\gamma'(u_S)$ and $\gamma'(v_S)$
  are in the same homology class in $H_0(\U{\Psi(S)})$. Since we
  consider homology with coefficients over $\Z_2$, the sum of two
  chains that are in the same homology class is always a boundary. In
  particular, $\gamma'(u_S+v_S) = \gamma'(u_S)+\gamma'(v_S)$ is a
  boundary in $\U{\Psi(S)}$.

\item[Step 2.] We use Ramsey's theorem (Theorem~\ref{t:Ramsey})
  to ensure a uniform ``$2$-in-$(2^b+1)$'' selection. Let $t$ be some
  parameter to be fixed in Step~3 and let $H$ denote the
  $(2^b+1)$-uniform hypergraph with vertex set $V(\simplex{s})$.
  For every hyperedge $S \in H$ there exists (by Step~1) a pair
  $Q_S\in \binom{[2^b+1]}{2}$ that selects a pair whose sum is mapped
  by $\gamma'$ to a boundary in $\U{\Psi(S)}$. We color $H$ by
  assigning to every hyperedge $S$ the ``color'' $Q_S$. Ramsey's
  theorem thus ensures that if $s \ge
  R_{2^b+1}\pth{\binom{2^b+1}{2},t}$ then there exist a set $T$ of
  $t$ vertices of $\simplex{s}$ and a pair $Q^* \in
  \binom{[2^b+1]}{2}$ so that $Q^*$ selects in \emph{any} $S \in
  \binom{T}{2^b+1}$ a pair $\{u_S,v_S\}$ such that $\gamma'(u_S+v_S)$
  is a boundary in $\U{\Psi(S)}$.

\item[Step 3.]  Now, let $r$ be the number of edges of $K$ and let
  $\sigma_1, \sigma_2, \ldots, \sigma_r$ denote the edges of $K$. We
  define
  \[ h_K(b) = R_{2^b+1}\pth{\binom{2^b+1}{2}, r(2^b-1)+v(K)}+1\]
  and assume that $s \ge h_K(b)-1$. We set the parameter $t$
  introduced in Step~2 to $t=r(2^b-1)+v(K)$. We can now apply
  Lemma~\ref{l:rescale} with $Y = V(K)$, $Z = T$, $q=2$, $w=2^b+1$,
  and $A_i = \sigma_i$ for $i \in [r]$. As a consequence, there exist
  an injection $f: V(K) \to T$ and $W_1, W_2, \ldots, W_r$ in
  $\binom{T}{2^b+1}$ such that (i) for each $i$, $Q^*$ selects
  $f(\sigma_i)$ in $W_i$, and (ii) $W_i \cap W_j = f(\sigma_i \cap
  \sigma_j)$ for $i,j \in [r], i  \neq j$.

\item[Step 4.] We define $\Phi$ by
  \[  \begin{array}{ccll}
    \Phi(\emptyset) & = & \emptyset& \\
    \Phi(\{v_i\}) & = & \Psi(f(v_i)) & \hbox{for } i=1, 2, \ldots, v(K)\\
    \Phi(\sigma_i) & = &  \Psi(W_i) & \hbox{for } i=1, 2, \ldots, r
  \end{array}\]
  We define $\gamma$ on the vertices of $K$ by putting
  $\gamma(v)=\gamma'(f(v))$ for any $v \in V(K)$. Now remark that for
  any edge $\sigma_i=\{u,v\}$ of $K$, $\gamma'(f(u)+f(v))$ is a
  boundary in $\U{\Psi(W_i)}$; this follows from the definition of $T$
  and the fact that $Q^*$ selects $\{f(u),f(v)\}$ in $W_i$. We can
  therefore define $\gamma(\{u,v\})$ to be some (arbitrary) chain in
  $\U{\Psi(W_i)}$ with boundary $\gamma'(f(u)+f(v))$. We then extend
  this map linearly into a chain map $\gamma:C_*(K) \to C_*(\Rspace)$.
\end{description}

To conclude the proof of Proposition~\ref{p:ccm} for $1$-dimensional
complexes it remains to check that the chain map $\gamma$ and the
function $\Phi$ defined in Step~4 have the desired properties.

\begin{observation}
  $\gamma$ is a nontrivial chain map constrained by $(\F,\Phi)$.
\end{observation}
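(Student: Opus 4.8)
The plan is to verify, one at a time, the three requirements hidden in the claim: that $\gamma$ is a chain map, that it is \nontrivial, and that the pair $(\F,\Phi)$ witnesses that $\gamma$ is constrained by $\F$ in the sense of the definition introduced in Section~\ref{s:history}. The first two are immediate from the construction in Step~4. For any edge $\sigma_i=\{u,v\}$ of $K$, the chain $\gamma(\sigma_i)$ was chosen so that $\partial\gamma(\sigma_i)=\gamma'(f(u)+f(v))=\gamma(u)+\gamma(v)=\gamma(\partial\sigma_i)$, so $\gamma$ commutes with the boundary operator. And every vertex $v$ of $K$ satisfies $\gamma(v)=\gamma'(f(v))$, the image under the \nontrivial\ chain map $\gamma'$ of a single vertex of $\simplex{s}$; hence $\gamma(v)$ is a $0$-chain of odd cardinality and $\gamma$ is \nontrivial.

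For the constraint condition, the convenient observation is that $\Phi$ factors as $\Phi=\Psi\circ g$, where $g\colon K\to 2^{V(\simplex{s})}$ is the map $\emptyset\mapsto\emptyset$, $\{v_i\}\mapsto\{f(v_i)\}$, $\sigma_i\mapsto W_i$. Since $\Psi$ already satisfies $\Psi(\alpha\cap\beta)=\Psi(\alpha)\cap\Psi(\beta)$ on $\simplex{s}$ and $\Psi(\emptyset)=\emptyset$, it is enough to check the same identities for $g$, a short case analysis over pairs of simplices of $K$. Two distinct vertices go to distinct points, as $f$ is injective; two distinct edges give $g(\sigma_i)\cap g(\sigma_j)=W_i\cap W_j=f(\sigma_i\cap\sigma_j)=g(\sigma_i\cap\sigma_j)$ by the conclusion of Lemma~\ref{l:rescale}; and a vertex $v_i$ and an edge $\sigma_j$ give $g(\{v_i\})\cap g(\sigma_j)=\{f(v_i)\}\cap W_j$, which equals $\{f(v_i)\}=g(\{v_i\})$ when $v_i\in\sigma_j$ (because $f(\sigma_j)\subseteq W_j$, as $Q^*$ selects $f(\sigma_j)$ in $W_j$) and equals $\emptyset=g(\emptyset)$ when $v_i\notin\sigma_j$ (because $W_j$ meets $f(V(K))$ only in $f(\sigma_j)$). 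Composing with $\Psi$ then yields condition~(i), $\Phi(\sigma\cap\tau)=\Phi(\sigma)\cap\Phi(\tau)$ and $\Phi(\emptyset)=\emptyset$. Condition~(ii) is checked dimensionwise: the support of $\gamma(v_i)=\gamma'(f(v_i))$ lies in $\U{\Psi(f(v_i))}=\U{\Phi(\{v_i\})}$ because $\gamma'$ is constrained by $(\F,\Psi)$, while for an edge $\sigma_i$ the chain $\gamma(\sigma_i)$ was chosen \emph{inside} $\U{\Psi(W_i)}=\U{\Phi(\sigma_i)}$, so its support lies there by construction.

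The single point that is not pure bookkeeping is the vertex--edge case above: it relies on the fact that each $W_j$ contains no vertex of $f(V(K))$ other than those of $f(\sigma_j)$. This is not literally part of the statement of Lemma~\ref{l:rescale}, but it is evident from its proof, where $W_j=\nu(D_j\cup\pi_0(A_j))$ with the dummy set $D_j$ disjoint from $\pi_0(V(K))$; so I would either invoke that construction directly or strengthen Lemma~\ref{l:rescale} by recording the extra conclusion $W_i\cap\pi(Y)=\pi(A_i)$ and citing it here. With that in hand, everything else is just unwinding the definitions of Steps~1--4.
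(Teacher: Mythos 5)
Your proposal is correct and follows essentially the same route as the paper: the same split into chain-map/nontriviality checks plus a case analysis (vertex--vertex, edge--edge, vertex--edge) of the identity $\Phi(\sigma\cap\tau)=\Phi(\sigma)\cap\Phi(\tau)$, with your factorization $\Phi=\Psi\circ g$ being only a cosmetic repackaging. The subtlety you flag in the vertex--edge case, that $W_j\cap f(V(K))=f(\sigma_j)$, is exactly what the paper's phrase ``by construction, $\tau\in\sigma_i$ if and only if $f(\tau)\in W_i$'' silently invokes, and your suggestion to record it as an explicit extra conclusion of Lemma~\ref{l:rescale} (it is immediate from its proof, since the dummy sets $D_i$ avoid $\pi_0(Y)$) is a reasonable way to make that step airtight.
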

\begin{proof}
  First, it is clear from the definition that $\gamma$ is a chain map.
  Moreover, the definition of $\gamma'$ ensures that for every vertex
  $v \in K$ the support of $\gamma(v)$ is a finite set of points with
  odd cardinality. So $\gamma$ is indeed a nontrivial chain map.

  The map $\Phi$ is from $K$ to $2^{[s+1]}$ and $\Phi(\emptyset)$ is
  by definition the empty set. The next property to check is that the
  identity $\Phi(\sigma \cap \tau) = \Phi(\sigma) \cap \Phi(\tau)$
  holds for all $\sigma, \tau \in K$. When $\sigma$ and $\tau$ are
  vertices this follows from the injectivity of $\Psi$ and $f$. When
  $\sigma$ and $\tau$ are edges this follows from the same identity
  for $\Psi$ and the fact that Step~4 guaranteed that $W_i \cap W_j =
  f(\sigma_i \cap \sigma_j)$ for $i,j \in [r], i \neq j$. The
  remaining case is when $\sigma=\sigma_i$ is an edge and $\tau$ a
  vertex. Then, by construction, $\tau \in \sigma_i$ if and only if
  $f(\tau) \in W_i$, and
  \[ \Phi(\sigma_i) \cap \Phi(\tau) = \Psi(W_i) \cap \Psi(f(\tau)) =
  \Psi(W_i \cap f(\tau)) = \left\{ \begin{array}{cl} \Psi(\emptyset) &
      \hbox{ if } f(\tau) \notin W_i\\ \Psi(f(\tau))& \hbox{ if }
      f(\tau) \in W_i\end{array}\right\} = \Phi(\sigma_i \cap \tau).\]

  It remains to check that for any simplex $\sigma \in K$, the support
  of $\gamma(\sigma)$ is contained in $\U{\Phi(\sigma)}$. When
  $\sigma=\{v\}$ is a vertex then $\gamma(\sigma) = \gamma'(f(v))$.
  Since $\gamma'$ is constrained by $(\F, \Psi)$, the support of
  $\gamma'(f(v))$ is contained in $\U{\Psi(f(v))} = \U{\Phi(v)}$, so
  the property holds. When $\sigma=\sigma_i$ is an edge,
  $\gamma(\sigma_i)$ is, by construction, a chain in $\U{\Psi(W_i)} =
  \U{\Phi(\sigma_i)}$ and the property also holds.
%
\end{proof}

%
%

\subsection{The induction}\label{s:induction}

Let $k \ge 2$, let $K$ be a simplicial complex of dimension $k$ and
assume that Proposition~\ref{p:ccm} holds for all simplicial complexes
of dimension $k-1$ or less. Let $\F$ be a finite family of subsets of
a topological space $\Rspace$ such that for any $\G \subsetneq \F$ and
any $0\le i \le k-1$, $\bigcap\G \neq \emptyset$ and
$\beta_i\pth{\cap\G} \le b$. Assuming that $\F$ contains sufficiently
many sets, we want to construct a nontrivial chain map $\gamma: C_*(K)
\to C_*(\Rspace)$ constrained by~$\F$.


\paragraph{Preliminary example.} When going from $k=0$ to $k=1$, the
first step (as described in Section~\ref{s:k=1}) is to start with a
constrained chain map $\gamma':C_*(\skel{0}{K}) \to C_*(\Rspace)$ and
observe that for some $1$-simplices
\noindent
\begin{minipage}{5cm}
  \begin{center}\includegraphics[width=5cm,keepaspectratio]{issue}\end{center}
\end{minipage}
\hfill
\begin{minipage}{10cm}
  $\{u,v\} \in K$ the chain $\gamma'(\partial\{u,v\})$ must already be
  a boundary. To see that this is not the case in general, consider
  the drawing of $\skelsim{1}{4}$ in an annulus depicted in the figure
  on the left.  Observe that for every triangle $\{i,j,k\} \in
  \skelsim{2}{4}$ the image, in this drawing, of $\partial\{i,j,k\}$
  is a cycle going around the hole of the annulus and is therefore not
  a boundary. So, if we start with a chain map $\gamma'$ corresponding
  to that drawing, we will not be able to extend it by ``filling'' any
  triangle directly. This is not a peculiar example, and a similar
  construction can easily be done with arbitrarily many vertices.
  Observe, though, that the cycle going from $1$ to $2$, then $4$,
  then $3$ and then back to $1$ \emph{is} a boundary; \hfill in other words,
  if we 
\end{minipage}

\smallskip
\noindent
replace, in the triangle $\partial\{1,2,3\}$, the edge from $2$ to $3$
by the concatenation of the edges from $2$ to $4$ and from $4$ to $3$,
we build, using a chain map of $\skelsim{1}{4}$ where no $2$-face can
be filled, a chain map of $\skelsim{2}{2}$ where the $2$-face can be
filled. We systematize this observation using the barycentric
subdivision of $K$.


\paragraph{Barycentric subdivision.} The idea behind the notion of
\emph{barycentric subdivision} is that the geometric realization of a
simplicial complex $K'$ can be subdivided by inserting a vertex at the
barycentre of every face, resulting in a new, finer, simplicial
complex, denoted $\sd K'$, that is still homeomorphic to $K'$. 
Formally, the vertices of $\sd K'$ consist of the
faces of $K'$, except for the empty face, and the faces of $\sd K'$
are the collections $\{\sigma_1, \dots, \sigma_\ell\}$ of faces of
$K'$ such that
\[ \emptyset \neq \sigma_1 \subsetneq \sigma_2 \subsetneq \cdots
\subsetneq \sigma_\ell.\]
In other words, the set of vertices of $\sd K'$ is $K'\setminus
\{\emptyset\}$ and the faces of $\sd K'$ are the chains of
$K'\setminus \{\emptyset\}$. For $\sigma \in K'$ we abuse the notation
and let $\sd \sigma$ denote the subdivision of $\sigma$ regarded as a
subcomplex of $\sd K'$, that is,
\[ \sd \sigma = \{ \{\sigma_1, \dots, \sigma_\ell\} \subseteq K' :
\emptyset \neq \sigma_1 \subsetneq \sigma_2 \subsetneq \cdots
\subsetneq \sigma_\ell \subseteq \sigma \}.\]
We will mostly manipulate barycentric subdivisions through the $\sd
\sigma$. For further reading on barycentric subdivisions we refer the
reader, for example, to~\cite[Section~1.7]{Matousek:BorsukUlam-2003}.

\paragraph{Overview of the construction of $\gamma$.} Let $s \in \N$
be some parameter depending on $K$ and to be determined later. To
construct $\gamma$ we will define three auxiliary chain maps
\[ C_*\pth{K^{(k-1)}} \quad \xrightarrow{\makebox[2em]{$\alpha$}}
\quad C_*\pth{\skel{k-1}{(\sd K)}} \quad
\xrightarrow{\makebox[3em]{$\beta_\sharp$}} \quad
C_*\pth{\skelsim{k-1}{s}} \xrightarrow{\makebox[2em]{$\gamma'$}} \quad
C_*(\Rspace)\]
As before, $\gamma'$ is a chain map from $C_*(\skelsim{k-1}{s})$
constrained by $\F$ and is obtained by applying the induction
hypothesis. Unlike in Section~\ref{s:k=1}, we do not inject the
vertices of $K$ into those of $\simplex{s}$ directly but proceed through
$\sd K$, the barycentric subdivision of $K$. We ``inject'' $K^{(k-1)}$ into
$\sd K^{(k-1)}$ by means of a chain map $\alpha$ (which will be the standard
chain map corresponding to a subdivision). We then construct an
injection $\beta$ of the vertices of $\sd K$ into the vertices of
$\simplex{s}$ which we extend linearly into a chain map
$\beta_\sharp$. The key idea is the following:
\begin{quote}
  The boundary of any $k$-simplex $\sigma$ of $K$ is mapped, under
  $\alpha$, to a sum of $k!$ boundaries of $k$-simplices of $\sd K$,
  all of which are mapped through $\beta_\sharp$ to chains with the
  same homology in some appropriate $\U{W_\sigma}$.
\end{quote}
Since $k!$ is even and we consider homology with coefficients in
$\Z_2$, it follows that $\gamma' \circ \beta_\sharp \circ \alpha
(\sigma)$ is a boundary in $\U{W_\sigma}$. We therefore construct
$\gamma$ as an extension of $\gamma' \circ \beta_\sharp \circ \alpha$.


\paragraph{Definition of $\gamma'$.}

Since $\skelsim{k-1}{s}$ has dimension $k-1$, the induction hypothesis
ensures that if the cardinality of $\F$ is large enough then there
exists a nontrivial chain map $\gamma':C_*(\skelsim{k-1}{s}) \to
C_*(\Rspace)$ constrained by $\F$. We denote by $\Psi$ a map such that
$\gamma'$ is constrained by $(\F,\Psi)$. Remark that $\Psi$ must be
monotone over $\skelsim{k-1}{s}$ as for any $\sigma \subseteq \tau \in
\skelsim{k-1}{s}$ we have $\Psi(\sigma) = \Psi(\sigma \cap \tau) =
\Psi(\sigma) \cap \Psi(\tau) \subseteq \Psi(\tau)$. It follows that
for any $\sigma \in \skelsim{k-1}{s}$ we have
\[ \Psi(\sigma) = \bigcup_{\tau \in \skelsim{k-1}{s}, \tau \subseteq
  \sigma} \Psi(\tau)\]
We use this identity to extend $\Psi$ to $\simplex{s}$, that is we
define:
\[ \forall A \subseteq V(\simplex{s}), \quad \Psi(A)=\bigcup_{\tau
  \in \skelsim{k-1}{s}, \tau \subseteq A}\Psi(\tau).\]
Remark that the extended map still commutes with the intersection:


\begin{lemma}\label{l:commute}
  For any $A, B \subseteq V(\simplex{s})$ we have $\Psi(A) \cap
  \Psi(B) = \Psi(A \cap B)$.
\end{lemma}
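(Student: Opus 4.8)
The plan is to prove the two inclusions separately. The inclusion $\Psi(A\cap B)\subseteq\Psi(A)\cap\Psi(B)$ is immediate from the defining formula for the extended map: enlarging the set can only enlarge the index set over which the union is taken, so the extended $\Psi$ is monotone, whence $\Psi(A\cap B)\subseteq\Psi(A)$ and $\Psi(A\cap B)\subseteq\Psi(B)$, and therefore $\Psi(A\cap B)\subseteq\Psi(A)\cap\Psi(B)$.

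For the reverse inclusion $\Psi(A)\cap\Psi(B)\subseteq\Psi(A\cap B)$, I would fix an index $x\in\Psi(A)\cap\Psi(B)$ and unwind the definition of the extended $\Psi$: there is a face $\sigma\in\skelsim{k-1}{s}$ with $\sigma\subseteq A$ and $x\in\Psi(\sigma)$, and likewise a face $\tau\in\skelsim{k-1}{s}$ with $\tau\subseteq B$ and $x\in\Psi(\tau)$. The key point is that $\sigma\cap\tau$ is again a face of $\skelsim{k-1}{s}$: it is a face of $\sigma$, hence has dimension at most $k-1$, and it is contained in $A\cap B$. Now invoke the fact that $\gamma'$ is constrained by $(\F,\Psi)$, i.e.\ part (i) of the definition of constrained chain maps, which gives $\Psi(\sigma\cap\tau)=\Psi(\sigma)\cap\Psi(\tau)$ for the \emph{original} map $\Psi$ on $\skelsim{k-1}{s}$. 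Since $x$ lies in both $\Psi(\sigma)$ and $\Psi(\tau)$, we obtain $x\in\Psi(\sigma\cap\tau)$; and since $\sigma\cap\tau$ is a face of $\skelsim{k-1}{s}$ contained in $A\cap B$, the defining formula for the extended map yields $x\in\Psi(\sigma\cap\tau)\subseteq\Psi(A\cap B)$. (If $\sigma\cap\tau$ happens to be empty, then $\Psi(\sigma)\cap\Psi(\tau)=\Psi(\emptyset)=\emptyset$ would contradict $x$ lying in it, so this degenerate subcase simply does not occur.)

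I do not anticipate a genuine obstacle here: the only things needing care are that the combinatorial intersection $\sigma\cap\tau$ of two simplices of the $(k-1)$-skeleton still belongs to it — which holds because the skeleton is a down-closed family of faces — and that the commuting identity from part (i) of the definition of ``constrained by'' is legitimately applied to the possibly empty simplex $\sigma\cap\tau$, which is covered by the convention $\Psi(\emptyset)=\emptyset$. Combining the two inclusions gives $\Psi(A)\cap\Psi(B)=\Psi(A\cap B)$, as claimed.
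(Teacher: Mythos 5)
Your proof is correct and takes essentially the same route as the paper: the paper distributes the union defining $\Psi(A)\cap\Psi(B)$ over the intersection and then uses exactly the two facts you rely on, namely the identity $\Psi(\sigma\cap\tau)=\Psi(\sigma)\cap\Psi(\tau)$ for simplices of $\skelsim{k-1}{s}$ (with $\Psi(\emptyset)=\emptyset$) and the observation that the intersections $\sigma\cap\tau$ with $\sigma\subseteq A$, $\tau\subseteq B$ are precisely the faces of $\skelsim{k-1}{s}$ contained in $A\cap B$. Your element-wise two-inclusion phrasing is just a repackaging of that same computation.
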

\begin{proof}
  For any $A, B \subseteq V(\simplex{s})$ we have
  \[\Psi(A)\cap \Psi(B) =
  \pth{\bigcup_{\sigma \in \skelsim{k-1}{s},\sigma\subseteq A}
    \Psi(\sigma)}\cap \pth{\bigcup_{\tau \in
      \skelsim{k-1}{s},\tau\subseteq B} \Psi(\tau)}\]
  Distributing the union over the intersections we get
  \[ \Psi(A)\cap \Psi(B) = \bigcup_{\sigma,\tau \in
      \skelsim{k-1}{s},\sigma\subseteq A,\tau\subseteq B}
    \Psi(\sigma)\cap \Psi(\tau)
  \]
  and as $\Psi(\sigma \cap \tau) = \Psi(\sigma) \cap \Psi(\tau)$ if
  $\sigma,\tau$ are simplices of $\skelsim{k-1}{s}$, this rewrites as
  \[
  \Psi(A)\cap \Psi(B) =\bigcup_{\sigma,\tau \in
    \skelsim{k-1}{s},\sigma\subseteq A,\tau\subseteq B} \Psi(\sigma
  \cap \tau).\]
  Finally, observing that
  \[ \{\sigma \cap \tau : \sigma,\tau \in
  \skelsim{k-1}{s},\sigma\subseteq A,\tau\subseteq B\} = \{\vartheta :
  \vartheta \in \skelsim{k-1}{s},\vartheta \subseteq A \cap B\}\]
  we get
  \[ \Psi(A)\cap \Psi(B) = \bigcup_{\vartheta \in \skelsim{k-1}{s},
    \vartheta \subseteq A \cap B} \Psi(\vartheta) = \Psi(A\cap B)\]
  which proves the desired identity.
\end{proof}


\paragraph{Definition of $\alpha$.} Now we define a chain map $\alpha:
C_*\pth{K^{(k-1)}} \to C_*\pth{\sd K^{(k-1)}}$ by first putting
\[ \alpha: \sigma \in K^{(k-1)} \mapsto
\mathop{\sum_{\tau \in \sd \sigma}}_{\dim \tau = \dim \sigma}
\tau,\]
and then extending that map linearly to $C_*\pth{K^{(k-1)}}$. See
Figure~\ref{f:alpha}. Remark that
$\alpha$ behaves nicely with respect to the differential:
\[ \alpha(\partial\sigma) = \mathop{\sum_{\tau \in \sd \sigma}}_{\dim
  \tau = \dim \sigma}
\partial\tau.\]
Note that the formula above makes sense and is valid even if $\sigma$ is a
$k$-simplex although we define $\alpha$ only up to dimension $k-1$.

%

\begin{figure}
  \begin{center}
    \includegraphics{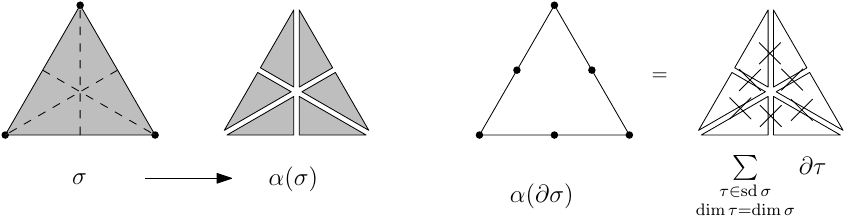}
    \caption{The map $\alpha$ applied to a simplex $\sigma$ (left) and to
    $\partial \sigma$ (right). Significant parts of the boundaries 
    $\partial \tau$ cancel out.}
    \label{f:alpha}
  \end{center}
\end{figure}

\paragraph{Definition of $\beta$.} We now construct the injection
$\beta: V(\sd K) \to V(\simplex{s})$ and, for constraining purposes,
an auxiliary function $\kappa$ associating with every $k$-dimensional
simplex of $K$ some simplex of $\simplex{s}$. We want these functions
to satisfy:
\begin{itemize}
\item[(P1)] For any simplex $\sigma \in K$, $\kappa(\sigma) \cap \ima
  \beta = \beta(V(\sd \sigma))$.
\item[(P2)] For any $k$-simplices $\sigma, \tau \in K$,
  $\kappa(\sigma) \cap \kappa(\tau) = \beta(V(\sd \sigma)) \cap
  \beta(V(\sd \tau))$.
\item[(P3)] For any $k$-simplex $\sigma \in K$, when $\tau$ ranges
  over all $k$-simplices of $\sd \sigma$, all chains $\gamma'\circ
  \beta_\sharp(\partial \tau)$ have support in
  $\U{\Psi(\kappa(\sigma))}$ and are in the same homology class in
  $H_{k-1}(\U{\Psi(\kappa(\sigma))})$.
\end{itemize}
The intuition behind these properties is that $\kappa(\sigma)$ should
augment $\beta(V(\sd \sigma))$ by ``dummy'' vertices (P1) in a way
that distinct simplices use disjoint sets of ``dummy'' vertices (P2).
Property~(P3), will allow building $\gamma$ over $k$-simplices as
explained in the preceding overview.

We start the construction of $\beta$ and $\kappa$ with a combinatorial
lemma. Let $\ell=2^{k+1}-1$ stand for the number of vertices of the
barycentric subdivision of a $k$-dimensional simplex, and set $m
=R_{k+1}(2^b,\ell)$.

\begin{claim}
\label{c:uniform-i}
For any integer $t$, if $s \ge R_m\pth{\binom{m}{\ell},t}$ then there
exist a set $T$ of $t$ vertices of $\simplex{s}$ and a set $Q^* \in
\binom{[m]}{\ell}$ such that $Q^*$ selects in any $M \in \binom{T}{m}$
a subset $L_M$ with the following property: when $\sigma$ ranges over
all $k$-simplices of $\simplex{s}$ with $\sigma \subseteq L_M$, all
chains $\gamma'(\partial \sigma)$ are in the same homology class in
$H_{k-1}\left(\U{\Psi(M)}\right)$.
\end{claim}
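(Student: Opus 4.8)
The plan is to perform two nested Ramsey reductions. The outer one will give us a large set on which the homology class of $\gamma'(\partial\sigma)$ is controlled in a \emph{fixed} ambient intersection; the inner one (absorbed into the definition of $m$) will let us select, inside any $m$-element subset, a sub-simplex $L_M$ all of whose $k$-faces land in the same homology class. First I would recall that by Lemma~\ref{l:commute} the extended map $\Psi$ still satisfies $\Psi(A)\cap\Psi(B)=\Psi(A\cap B)$, and that $\Psi$ is monotone, so for any $M\subseteq V(\simplex{s})$ and any face $\sigma\subseteq M$ we have $\Psi(\sigma)\subseteq\Psi(M)$; since $\gamma'$ is constrained by $(\F,\Psi)$, the support of $\gamma'(\partial\sigma)$ lies in $\U{\Psi(\sigma)}\subseteq\U{\Psi(M)}$, so it makes sense to speak of its class in $H_{k-1}(\U{\Psi(M)})$.

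Next I would set up the coloring that produces $L_M$. Fix an $m$-element set $M=\{x_1<\dots<x_m\}$ of vertices of $\simplex{s}$. Consider the $(k+1)$-uniform hypergraph on $M$ and color each $(k+1)$-subset $\sigma$ (a $k$-simplex with $\sigma\subseteq M$) by the homology class of $\gamma'(\partial\sigma)$ in $H_{k-1}(\U{\Psi(M)})$. By the hypothesis $\beta_{k-1}(\U{\Psi(M)})\le b$ (applied to $\G=\F\setminus\{U_i : i\in[n]\setminus\Psi(M)\}$, which is a proper subfamily since $M$ is a proper subset of $V(\simplex{s})$ — this uses that $s$ is large), the group $H_{k-1}(\U{\Psi(M)})$ has at most $2^b$ elements. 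So Ramsey's theorem (Theorem~\ref{t:Ramsey}) with $m=R_{k+1}(2^b,\ell)$ yields an $\ell$-element subset $L_M\subseteq M$ that is monochromatic: all chains $\gamma'(\partial\sigma)$, for $k$-simplices $\sigma\subseteq L_M$, lie in one common class of $H_{k-1}(\U{\Psi(M)})$. Crucially, $L_M$ is determined by which positions (out of $[m]$) its elements occupy inside $M$; encode this as a set $Q_M\in\binom{[m]}{\ell}$, so that $Q_M$ selects $L_M$ in $M$.

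Finally I would run the outer Ramsey argument to make $Q_M$ uniform. View $\binom{V(\simplex{s})}{m}$ as the edge set of the $m$-uniform hypergraph on $V(\simplex{s})$, and color each edge $M$ by $Q_M\in\binom{[m]}{\ell}$ (there are $\binom{m}{\ell}$ colors). By Theorem~\ref{t:Ramsey}, if $s\ge R_m\!\left(\binom{m}{\ell},t\right)$ then there is a set $T$ of $t$ vertices of $\simplex{s}$ and a single $Q^*\in\binom{[m]}{\ell}$ such that $Q_M=Q^*$ for every $M\in\binom{T}{m}$; that is, $Q^*$ selects in every such $M$ a subset $L_M$ with the stated monochromaticity property. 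This is exactly the assertion of the claim. The one point needing a little care — the step I expect to be the (mild) obstacle — is the bookkeeping ensuring that $\U{\Psi(M)}$ genuinely arises as $\bigcap\G$ for a \emph{proper} subfamily $\G\subsetneq\F$, so that the Betti-number hypothesis $\beta_{k-1}\le b$ legitimately applies; this is guaranteed as long as $|V(\simplex{s})|=s+1$ exceeds $n$ is not what we need — rather, we need $\Psi$ to take values in $2^{[n]}$ with $\Psi(M)$ a proper subset, which holds because $\Psi(M)\subseteq\Psi(V(\skelsim{k-1}{s}))$ and the construction of $\gamma'$ via the induction hypothesis (Proposition~\ref{p:ccm}) already arranges that all relevant intersections are over proper subfamilies. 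Everything else is a direct application of Ramsey's theorem and the finiteness of $H_{k-1}$ over $\Z_2$.
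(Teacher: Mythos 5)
Your proof is correct and follows essentially the same route as the paper: an inner Ramsey application with $m=R_{k+1}(2^b,\ell)$, using that $H_{k-1}\left(\U{\Psi(M)}\right)$ has at most $2^b$ elements, followed by an outer Ramsey application with $\binom{m}{\ell}$ colors to make the selecting set $Q^*$ uniform over all $M\in\binom{T}{m}$. The only slip is in your parenthetical bookkeeping: since $\U{\Psi(M)}=\bigcap_{i\in[n]\setminus\Psi(M)}U_i$, the relevant subfamily is $\G=\{U_i: i\in[n]\setminus\Psi(M)\}$ (not its complement), and its properness amounts to $\Psi(M)\neq\emptyset$ rather than to $M$ being a proper subset of $V(\simplex{s})$ --- a point the paper's own proof also leaves implicit.
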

\begin{proof}
  Let $M$ be a subset of $m$ vertices of $\Delta_s$. Since $\gamma'$
  is constrained by $(\F,\Psi)$, for every $k$-simplex $\sigma
  \subseteq M$ the support of $\gamma'(\partial \sigma)$ is contained
  in $\U{\Psi( \partial \sigma)} \subseteq \U{\Psi(\sigma) }\subseteq \U{\Psi(M)}$. 
  We can therefore color
  the $(k+1)$-uniform hypergraph on $M$ by assigning to every
  hyperedge $\sigma$ the homology class of $\gamma'(\partial \sigma)$
  in $\U{\Psi(M)}$. Since $\beta_{k-1}\pth{\U{\Psi(M)}} \le b$, there
  are at most $2^b$ colors in this coloring. As $m=R_{k+1}(2^b,\ell)$,
  Ramsey's Theorem implies that there exists a subset $L \subset M$ of
  $\ell$ vertices inducing a monochromatic hypergraph. We let $Q_M$
  denote an element of $ \binom{[m]}{\ell}$ that selects such a subset
  $L$.

  It remains to find a subset $T$ of vertices of $\Delta_s$ so that
  all $m$-element subsets $M \subseteq T$ give rise to the same $Q_M$.
  This is done by another application of Ramsey's theorem to the
  $m$-uniform hypergraph on the vertices of $\Delta_s$ where each
  hyperedge $M$ is colored by the $\ell$-element subset $Q_M$. The
  subset $T$ can have size $t$ as soon as $s \ge
  R_m\pth{\binom{m}{\ell},t}$, which proves the statement.
\end{proof}

Now, back to the construction of $\beta$ and $\kappa$. We first want a
subset of $V(\simplex{s})$ with a ``uniform $\ell$-in-$m$ selection''
property of Claim~\ref{c:uniform-i} large enough so that we can inject
$V(\sd K)$ using Lemma~\ref{l:rescale}. We set:
\[ t= v(\sd K)+r(m-\ell) \quad \hbox{and} \quad s^* =
R_m\pth{\binom{m}{\ell},t},\]
and assume that $s \ge s^*$; since $s^*$ only depends on $b$ and $K$,
this merely requires that $\F$ is large enough, again as a function of
$b$ and $K$, so that $\gamma'$ still exists. We let $T$ and $Q^*$
denote the subset of $V(\simplex{s})$ and the element of
$\binom{[m]}{\ell}$ whose existence follows from applying
Claim~\ref{c:uniform-i}. Let $\sigma_1, \sigma_2, \ldots, \sigma_r$
denote the $k$-dimensional simplices of $K$. We apply
Lemma~\ref{l:rescale} with
\[ Y= V(\sd K), \quad Z = T, \quad A_i = V(\sd \sigma_i), \quad
q=\ell, \quad \hbox{and } w=m,\]
and obtain an injection $\pi: Y \to Z$ and $W_1, W_2, \ldots, W_r \in
\binom{Z}{m}$ such that (i) for every $i \le r$, $Q^*$ selects
$\pi(A_i)$ in $W_i$, and (ii) for any $i\neq j \le r$, $W_i \cap W_j =
\pi(A_i \cap A_j)$. This injection $\pi$ is our map $\beta$ and we put
$\kappa(\sigma_i) = W_i$. It is clear that Property~(P1) holds, and since 
\[ \kappa(\sigma_i) \cap \kappa(\sigma_j) = W_i \cap W_j = \pi(A_i
\cap A_j) = \beta(V(\sd \sigma_i) \cap V(\sd \sigma_j)) = \beta(V(\sd
\sigma_i)) \cap \beta(V(\sd \sigma_j)),\]
Property~(P2) also holds. The set $Q^*$ selects $\pi(A_i)$ in $W_i$
(Lemma~\ref{l:rescale}) so Claim~\ref{c:uniform-i} ensures that when
$\tau$ ranges over all $k$-simplices of $\simplex{s}$ with $\tau
\subseteq \pi(A_i)$, all chains $\gamma'(\partial \tau)$ have support
in $\U{\Psi(W_i)}$ and are in the same homology class in
$H_{k-1}\left(\U{\Psi(W_i)}\right)$. Substituting $\pi(A_i) = \beta(V(\sd
\sigma_i))$ and $W_i = \kappa(\sigma_i)$, we see that~(P3) holds.

\paragraph{Construction of $\gamma$.} Recall that we have the chain
maps\footnote{$\beta_\sharp$ is the chain map induced by $\beta$ restricted to
  chains of dimension
  at most $(k-1)$.}:
\[ C_*\pth{K^{(k-1)}} \quad \xrightarrow{\makebox[2em]{$\alpha$}}
\quad C_*\pth{\skel{k-1}{(\sd K)}} \quad
\xrightarrow{\makebox[3em]{$\beta_\sharp$}} \quad
C_*\pth{\skelsim{k-1}{s}} \xrightarrow{\makebox[2em]{$\gamma'$}} \quad
C_*(\Rspace).\]
We define $\gamma = \gamma' \circ \beta_\sharp \circ \alpha$ as a
chain map from $C_*\pth{K^{(k-1)}}$ to $C_*(\Rspace)$. Let $\sigma$ be a
$k$-dimensional simplex of $K$. From the definition of $\alpha$ we
have
\[\gamma\pth{\partial \sigma} = \mathop{\sum_{\tau \in \sd
    \sigma}}_{\dim \tau = \dim \sigma} \gamma^\prime \circ
\beta_\sharp(\partial \tau).\]
By property~(P3), all summands in the above chain have support in
$\U{\Psi(\kappa(\sigma))}$ and belong to the same homology class in
$H_{k-1}\left(\U{\Psi(\kappa(\sigma))}\right)$. There is an even number of
summands, namely $k!$ and we are using homology over $\Z_2$, so
$\gamma' \circ \beta_\sharp \circ \alpha (\partial \sigma)$ has
support in $\U{\Psi(\kappa(\sigma))}$ and is a boundary in
$\U{\Psi(\kappa(\sigma))}$.  We can therefore extend $\gamma$ into a
chain map from $C_*(K)$ to $C_*(\Rspace)$ in a way that for any $k$-simplex
$\sigma$ of $K$, the support of $\gamma(\sigma)$ is contained in
$\U{\Psi(\kappa(\sigma))}$.

\paragraph{Properties of $\gamma$.} First we verify that $\gamma$ is nontrivial. If $v$ is a vertex of $K$ then
$\sd v$ consists of a single simplex, also a vertex. The chain
$\alpha(v)$ is thus a single vertex of
$\sd K$, and $\beta_\sharp \circ \alpha(v)$ is still a single
vertex $\beta(\sd v)$. Since $\gamma'$ is nontrivial, the support of
$\gamma(v)$ is an odd number of points and therefore $\gamma$ is also
nontrivial. It remains to argue that $\gamma$ is constrained by
$(\F,\Phi)$ where:
\[ \Phi:\left\{\begin{array}{rcl} K & \to & 2^\F \\ \sigma & \mapsto &
    \left\{ \begin{array}{ll} \Psi(\beta(V(\sd\sigma))) & \hbox{if } \dim \sigma \le k-1 \\
        \Psi(\kappa(\sigma)) & \hbox{if } \dim \sigma =
        k\end{array}\right.\\\end{array}\right.\]
It is clear that $\Phi(\emptyset) = \Psi(\emptyset) = \emptyset$ by
definition of $\Psi$. Also, the construction of $\gamma$ immediately
ensures that for any $\sigma \in K$ the support of $\gamma(\sigma)$ is
contained in $\U{\Phi(\sigma)}$. To conclude the proof that $\gamma$
is constrained by $(\F,\Phi)$ and therefore the induction it only
remains to check that $\Phi$ commutes with the intersection:

\begin{claim}
  For any $\sigma, \tau \in K$, $\Phi(\sigma \cap \tau) = \Phi(\sigma)
  \cap \Phi(\tau)$.
\end{claim}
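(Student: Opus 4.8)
The plan is to peel off the map $\Psi$ first and reduce the claim to a purely combinatorial set identity. Write $\Phi=\Psi\circ\Theta$, where $\Theta\colon K\to 2^{V(\simplex{s})}$ is the ``label-set'' map defined by $\Theta(\rho)=\beta(V(\sd \rho))$ when $\dim\rho\le k-1$ and $\Theta(\rho)=\kappa(\rho)$ when $\dim\rho=k$. By Lemma~\ref{l:commute}, for any $\sigma,\tau\in K$ we have $\Phi(\sigma)\cap\Phi(\tau)=\Psi(\Theta(\sigma))\cap\Psi(\Theta(\tau))=\Psi(\Theta(\sigma)\cap\Theta(\tau))$, so it suffices to prove the stronger, $\Psi$-free statement $\Theta(\sigma)\cap\Theta(\tau)=\Theta(\sigma\cap\tau)$, which I would then establish by a short case analysis on $\dim\sigma$ and $\dim\tau$. (For $\sigma\cap\tau=\emptyset$ note $V(\sd \emptyset)=\emptyset$, so $\Theta(\emptyset)=\emptyset$ and $\Phi(\emptyset)=\Psi(\emptyset)=\emptyset$, and the statement is subsumed below.)

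The combinatorial input is the following observation about barycentric subdivisions: the vertices of $\sd \rho$ are exactly the nonempty faces of $\rho$, so a nonempty face lies in $V(\sd \sigma)\cap V(\sd \tau)$ if and only if it is contained in both $\sigma$ and $\tau$, i.e.\ in $\sigma\cap\tau$; hence $V(\sd \sigma)\cap V(\sd \tau)=V(\sd(\sigma\cap\tau))$. Since $\beta$ is injective, this immediately upgrades to $\beta(V(\sd \sigma))\cap\beta(V(\sd \tau))=\beta(V(\sd(\sigma\cap\tau)))$, which is the core of every case below. One should also record that $\sigma\cap\tau$ has dimension $\le k-1$ unless $\sigma=\tau$.

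It remains to run the case analysis; by symmetry we may assume $\dim\sigma\ge\dim\tau$. If $\dim\sigma,\dim\tau\le k-1$ then $\Theta(\sigma)\cap\Theta(\tau)=\beta(V(\sd \sigma))\cap\beta(V(\sd \tau))=\beta(V(\sd(\sigma\cap\tau)))=\Theta(\sigma\cap\tau)$, using $\dim(\sigma\cap\tau)\le k-1$. If $\dim\sigma=k$ and $\dim\tau\le k-1$ then $\Theta(\tau)=\beta(V(\sd \tau))\subseteq\ima\beta$, so $\Theta(\sigma)\cap\Theta(\tau)=\pth{\kappa(\sigma)\cap\ima\beta}\cap\beta(V(\sd \tau))$, which equals $\beta(V(\sd \sigma))\cap\beta(V(\sd \tau))$ by~(P1), and we conclude as before since $\sigma\cap\tau\subseteq\tau$ still has dimension $\le k-1$. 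If $\dim\sigma=\dim\tau=k$ and $\sigma=\tau$ the identity is trivial; if $\sigma\neq\tau$, then $\sigma\cap\tau$ is a proper face, of dimension $\le k-1$, and $\Theta(\sigma)\cap\Theta(\tau)=\kappa(\sigma)\cap\kappa(\tau)=\beta(V(\sd \sigma))\cap\beta(V(\sd \tau))=\Theta(\sigma\cap\tau)$ by~(P2). I expect the only point that needs genuine care is this last case: (P2) is guaranteed only for \emph{distinct} $k$-simplices (it descends from condition~(ii) of Lemma~\ref{l:rescale}, which is stated for $i\neq j$), so the diagonal $\sigma=\tau$ must be singled out; everything else is bookkeeping. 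Combining the three cases with the reduction of the first paragraph proves the claim, which completes the verification that $\gamma$ is constrained by $(\F,\Phi)$, and hence finishes the induction step and the proof of Proposition~\ref{p:ccm}.
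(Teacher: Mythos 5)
Your proposal is correct and follows essentially the same argument as the paper: the same three-way case analysis using Lemma~\ref{l:commute}, the identity $V(\sd\sigma)\cap V(\sd\tau)=V(\sd(\sigma\cap\tau))$ together with injectivity of $\beta$, and properties (P1)/(P2), with the diagonal $\sigma=\tau$ treated separately exactly as the paper does. Factoring $\Phi=\Psi\circ\Theta$ and peeling off $\Psi$ once at the start is only a cosmetic reorganization of the paper's proof, which instead invokes Lemma~\ref{l:commute} within each case.
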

\begin{proof}
  The claim is obvious for $\sigma = \tau$, so from now on assume that this is not the case.
  First assume that $\sigma$ and $\tau$ have dimension at most $k-1$.
  Then,
  \[ \Phi(\sigma) \cap \Phi(\tau) = \Psi(\beta(V(\sd \sigma))) \cap
  \Psi(\beta(V(\sd \tau))) = \Psi(\beta(V(\sd \sigma)) \cap \beta(V(\sd
  \tau))),\]
  the last equality following from Lemma~\ref{l:commute}. Since the
  map $\beta$ on subsets of $V(\simplex{s})$ is induced by a map $\beta$
  on vertices of $\simplex{s}$ we have $\beta(V(\sd \sigma)) \cap
  \beta(V(\sd \tau)) = \beta(V(\sd \sigma) \cap V(\sd \tau))$.
  Moreover, by the definition of the barycentric subdivision we have
  $V(\sd \sigma) \cap V(\sd \tau) = V(\sd (\sigma \cap \tau))$. Thus,
  \[ \Psi(\beta(V(\sd \sigma)) \cap \beta(V(\sd \tau))) = \Psi
  (\beta(V(\sd(\sigma \cap \tau)))) = \Phi(\sigma \cap \tau),\]
  and the statement holds for simplices of dimension at most $k-1$.

\medskip

  Now assume that $\sigma$ and $\tau$ are both $k$-dimensional so
  that
  \[ \Phi(\sigma) \cap \Phi(\tau) = \Psi(\kappa(\sigma)) \cap
  \Psi(\kappa(\tau)) = \Psi(\kappa(\sigma) \cap \kappa(\tau)) =
  \Psi(\beta(V(\sd \sigma)) \cap \beta(V(\sd \tau))),\]
  the last identity following from Property~(P2) of the map $\kappa$.
  Again, from the definition of $\beta$ and the barycentric
  subdivision we have
  \[ \beta(V(\sd \sigma)) \cap \beta(V(\sd \tau)) = \beta(V(\sd(\sigma
  \cap \tau))).\]
  We thus obtain
  \[ \Phi(\sigma) \cap \Phi(\tau) = \Psi \circ \beta \circ V(\sd (\sigma \cap
  \tau)) = \Phi(\sigma \cap \tau),\]
  the last identity following from the definition of $\Phi$ on
  simplices of dimension at most $k-1$. The statement also holds for
  simplices of dimension $k$.

  \medskip

  Finally assume that $\sigma$ and $\tau$ are of dimension $k$ and at
  most $k-1$ respectively. Then, applying Lemma~\ref{l:commute} we have:
  \[ \Phi(\sigma) \cap \Phi(\tau) = \Psi(\kappa(\sigma)) \cap
  \Psi(\beta(V(\sd \tau))) = \Psi(\kappa(\sigma)\cap\beta(V(\sd
  \tau))).\]
  Note that $\beta(V(\sd \tau)) \subseteq \ima \beta$ and that, by
  property~(P1), $\kappa(\sigma) \cap \ima \beta = \beta(V(\sd
  \sigma))$. We thus have
  \[ \kappa(\sigma) \cap \beta(V(\sd \tau)) = \beta(V(\sd \sigma))
  \cap \beta(V(\sd \tau)) = \beta(V(\sd(\sigma \cap \tau))),\]
  the last equality following, again, from the definition of
  barycentric subdivision. As $\sigma \cap \tau$ has dimension at most
  $k-1$ we have
  \[ \Phi(\sigma) \cap \Phi(\tau) = \Psi(\beta(V(\sd(\sigma \cap
  \tau)))) = \Phi(\sigma \cap \tau)\]
  and the statement holds for the last case.
\end{proof}

\bibliographystyle{alpha}
\bibliography{hb}

\clearpage
\appendix

\end{document}